\documentclass[10pt]{amsart}
\baselineskip=7.0mm
\usepackage{amsmath}
\usepackage{amssymb}
\setlength{\baselineskip}{1.09\baselineskip}

\usepackage{mathtools}
\usepackage[mathscr]{eucal}
\usepackage{esint}
\usepackage{tikz}

\newcommand{\quadd}{\hspace{1.5em}}
\theoremstyle{plain}
\newtheorem{theorem}{Theorem}[section]
\newtheorem{lemma}[theorem]{Lemma}
\newtheorem{prop}[theorem]{Proposition}

\theoremstyle{definition}

\newtheorem{defn}[theorem]{Definition}

\numberwithin{equation}{section}

\setlength{\textwidth}{5.8in} \setlength{\textheight}{8.0in}
\hoffset=-0.45truein \voffset=0.1truein

\begin{document}

\title[Singular Solutions of the Loewner-Nirenberg Problem]
{Singular Solutions of the Loewner-Nirenberg Problem in Conic Domains with Prescribed Singularity at Vertices}

\author[Zhou]{Stephen Zhou}
\address{Department of Mathematics\\
University of Notre Dame\\
Notre Dame, IN 46556, USA} 
\email{szhou4@nd.edu}

\begin{abstract} 
We study positive singular solutions of the Loewner-Nirenberg problem on conical domains and establish the existence of solutions that admit prescribed asymptotic expansions near vertices, valid to arbitrarily high order of approximation.
\end{abstract}

\date{\today}

\maketitle 
\section{Introduction}\label{sec-INTRO}
The study of singular solutions to conformally invariant equations has a long history, beginning with the classical Yamabe problem and continuing with the boundary blow-up problem introduced by Loewner and Nirenberg \cite{LN}. Specifically, they studied positive solutions of
\begin{align}
  \Delta u &= \tfrac14\,n(n-2)\,u^{\frac{n+2}{n-2}}
       \quad\text{in }\Omega, \label{eq:LN} \\
  u &= \infty
       \quad\text{on }\partial\Omega, \label{eq:LNB}
\end{align}
where $\Omega$ is a bounded domain in $\mathbb{R}^n$ for $n\geq3$. An improved version of the main result in \cite{LN} states that given a bounded Lipschitz domain $\Omega$ in $\mathbb{R}^n$, there exists a unique positive solution $u$ of \eqref{eq:LN}-\eqref{eq:LNB}, and furthermore, if $\partial\Omega$ is $C^{1,\alpha}$ for some $\alpha\in[0,1)$, then
\begin{equation}\label{eq:LNblowup}
|d^{\frac{n-2}{2}}u-1|\leq C d^\alpha\quad\text{near }\partial\Omega,
\end{equation}
where $d$ is the distance function on $\Omega$ to $\partial\Omega$ and $C$ is a positive constant depending on $\Omega$. In particular, \eqref{eq:LNblowup} reveals that the solution $u$ has the precise blow-up rate $d^{-\frac{n-2}{2}}$. A natural problem, addressed in later works, is to go beyond this leading-order behavior and develop full asymptotic expansions of solutions near the singular boundary. For example, if $\Omega$ has a smooth boundary, Mazzeo \cite{MAZZEO2} and Andersson, Chruściel, and Friedrich \cite{ACF} proved that solutions of \eqref{eq:LN}-\eqref{eq:LNB} admit a polyhomogeneous expansion near $\partial\Omega$.

Recently, there have been some studies of asymptotic behaviors of solutions of \eqref{eq:LN}-\eqref{eq:LNB} if $\Omega$ is only Lipschitz. 
Han and Shen \cite{HanShen2020} studied \eqref{eq:LN}-\eqref{eq:LNB} in a class of Lipschitz domains and 
identified the leading terms in the asymptotic expansions. 
Han, Jiang, and Shen \cite{HJS2024} studied the asymptotic behaviors of solutions of \eqref{eq:LN}-\eqref{eq:LNB} if $\Omega$ is a finite cone. In this case, it is natural to first consider a positive solution $u_V$ on the infinite cone $V$ over some spherical domain $\Sigma\subsetneq \mathbb{S}^{n-1}$ satisfying
\begin{align}
  \Delta u_V 
    &= \frac14n(n-2)\,u_V^{\frac{n+2}{n-2}}
    \quad \text{in }V, \label{eq:LNINFCONE}
    \\
  u_V&=\infty
    \quad \text{on }\partial V. \label{eq:LNINFCONEB}
\end{align}
According to \cite{HJS2024}, under sufficient regularity assumptions on $\Sigma$, \eqref{eq:LNINFCONE}-\eqref{eq:LNINFCONEB} admits a unique positive solution $u_V$, which in polar coordinates $x = r\theta$ takes the form
\begin{equation}\label{eq:firstxi}
u_V(x)=|x|^{-\frac{n-2}{2}}\xi (\theta),
\end{equation}
where $\xi$ is a smooth function on $\Sigma$ and $\xi=\infty$ on $\partial\Sigma$. We refer to $u_V$ as the radial solution of \eqref{eq:LN}.

The main question is how a solution $u$ of 
\begin{align}
    \Delta u &=\tfrac14\,n(n-2)\,u^{\frac{n+2}{n-2}}
    \quad \text{in }V\cap B_1, \label{eq:LNFINITECONE}
    \\
    u &= \infty
        \quad \text{on }\partial V\cap B_1,\label{eq:LNFINITECONEB}
\end{align}
compares to $u_V$. While both $u$ and $u_V$ diverge on $\partial V \cap B_1$, it is not obvious whether their ratio $u/u_V$ remains bounded near the boundary. Han, Jiang, and Shen proved that this ratio is not only bounded but in fact has a full asymptotic expansion with leading term $1$. Their main theorem can be stated as follows.
\begin{theorem}[{\cite{HJS2024}}]\label{thm:HJS}
For $n \geq 3$, let $V$ be an infinite Euclidean cone over some Lipschitz domain 
$\Sigma \subsetneq S^{n-1}$ and $u_V \in C^\infty(V)$ the unique positive solution of 
\eqref{eq:LNINFCONE}-\eqref{eq:LNINFCONEB}. Then there exist a constant $\tau$ and an increasing sequence of 
positive constants $\{\gamma_i\}_{i=1}^\infty$, with $\gamma_i \to \infty$, such that for any positive 
solution $u \in C^\infty(V \cap B_1)$ of \eqref{eq:LNFINITECONE}-\eqref{eq:LNFINITECONEB} and any integer $m \geq 0$, 
\begin{equation}\label{eq:HJS}
\left|
  (u_V^{-1}u)(x) - 1
  - \sum_{i=1}^m \;\sum_{j=0}^{i-1}
     c_{ij}(\theta)\,|x|^{\gamma_i}\,(-\ln|x|)^j
\right|
\;\leq\;
C\, d_\Sigma^\tau \, |x|^{\gamma_{m+1}}\,(-\ln|x|)^m ,
\end{equation}
where $C>0$ is a constant, $d_\Sigma$ denotes the distance function on $\Sigma$ to $\partial\Sigma$, 
and each $c_{ij}$ is a bounded smooth function on $\Sigma$ satisfying $c_{ij} = O(d_\Sigma^\tau)$.
\end{theorem}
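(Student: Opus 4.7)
My approach is to work with the quotient $v := u/u_V$, which by the conformal invariance of the Loewner--Nirenberg equation satisfies
\[
\Delta_g v \;=\; \tfrac{n(n-2)}{4}\bigl(v^{(n+2)/(n-2)} - v\bigr), \qquad g := u_V^{4/(n-2)}\, g_{\mathrm{eucl}},
\]
with $g$ of constant scalar curvature $-n(n-1)$. Because of \eqref{eq:firstxi}, the substitution $t = -\log|x|$ converts $g$ to $\xi(\theta)^{4/(n-2)}(dt^2 + g_\Sigma)$, so the problem reduces to long-time asymptotics of a time-translation-invariant semilinear equation on the half-cylinder $[0,\infty)\times\Sigma$, with the radial solution corresponding to $v\equiv 1$. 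The goal is to expand $v-1$ in powers of $e^{-t}=|x|$.

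The first step is to upgrade the known leading-order behavior to a quantitative initial decay $|v-1|\le C|x|^{\delta}$ for some $\delta>0$, uniformly in $\theta$. This can be obtained from the Han--Shen estimates combined with a barrier construction using sub- and super-solutions of the form $(1\pm\varepsilon|x|^{\delta})u_V$ and the comparison principle on annular regions. With initial decay in hand, I would linearize $v=1+w$ to obtain $\Delta_g w - n w = Q(w)$ where $Q(w)=O(w^2)$. Separating variables $w\sim e^{-\gamma t}\psi(\theta)$ on the cylinder and conjugating by $\xi$, the homogeneous linear problem becomes an eigenvalue problem on $\Sigma$ of the form
\[
-\Delta_\Sigma \psi \;+\; \tfrac{n-2}{4(n-1)} R_\Sigma\,\psi \;+\; \tfrac{n(n+2)}{4}\,\xi^{4/(n-2)}\,\psi \;=\; \gamma^{2}\psi,
\]
with a confining potential blowing up at $\partial\Sigma$. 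The operator is self-adjoint and semibounded, so its spectrum is discrete and unbounded, producing the indicial roots $\{\gamma_i\}$ of the theorem.

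Given this spectral data, the expansion is built inductively. Assuming that at stage $m$ the error $v-1-\sum_{i\le m}\sum_{j}c_{ij}(\theta)|x|^{\gamma_i}(-\log|x|)^{j}$ is controlled by the next exponent, substitution into the equation produces a source from products of the previously determined coefficients. These products generate composite exponents $|x|^{\gamma_{i_1}+\cdots+\gamma_{i_p}}$; whenever such a composite exponent coincides with some $\gamma_k$, the inhomogeneous equation $(-\partial_t^{2}+A_\Sigma)\Phi=e^{-\gamma_k t}\cdot(\text{data})$ on the cylinder can only be solved by introducing an additional factor of $t=-\log|x|$, which is precisely the source of the logarithmic powers. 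A book-keeping argument shows that at exponent $\gamma_i$ the accumulated log-power is at most $i-1$, matching the sum in \eqref{eq:HJS}, and weighted linear estimates for $-\partial_t^{2}+A_\Sigma$ on the cylinder then yield the remainder bound.

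The central obstacle is the weighted linear theory for the cross-sectional operator. Because $\Sigma$ is only Lipschitz and the potential blows up like $d_\Sigma^{-2}$ at $\partial\Sigma$, solutions to the linear problem should decay at $\partial\Sigma$ like $d_\Sigma^{\tau}$ for an indicial exponent $\tau>0$ determined by the boundary geometry, and one must show both that all eigenfunctions of $A_\Sigma$ and the resolvent of $-\partial_t^{2}+A_\Sigma$ preserve a function space encoding this $d_\Sigma^{\tau}$ behavior uniformly in $t$. Establishing this — essentially a boundary-layer analysis in Fermi coordinates near $\partial\Sigma$ coupled with the spectral theory of $A_\Sigma$ — is where the work concentrates; once it is in place the formal expansion and the nonlinear bootstrap follow by standard arguments, and the coefficients $c_{ij}$ inherit the $O(d_\Sigma^{\tau})$ boundary behavior claimed in the theorem.
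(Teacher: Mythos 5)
This statement is Theorem~\ref{thm:HJS}, which the present paper quotes from \cite{HJS2024} and does not reprove; so the comparison must be against the machinery of \cite{HJS2024}, which Section~\ref{sec-LYE} of this paper closely mirrors. Your overall strategy is the correct one and is essentially the strategy actually used: pass to cylindrical variables $t=-\ln|x|$, linearize at the radial solution, obtain the indicial roots from the discrete spectrum of a cross-sectional Schr\"odinger operator with an inverse-square confining potential at $\partial\Sigma$ (your eigenvalue problem is, after unwinding the conformal factor, exactly $-L\phi_i=\lambda_i\phi_i$ with $L=\Delta_\theta-\tfrac{n(n+2)}{4}\rho^{-2}$ and $\gamma_i^2=\lambda_i+\tfrac{(n-2)^2}{4}$), build the expansion inductively with logarithms arising from resonances between composite exponents and indicial roots, and control everything in spaces weighted by a power of the boundary distance.

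That said, as a proof the proposal has real gaps precisely at the two points you flag but do not execute. First, the initial decay: the barriers $(1\pm\varepsilon|x|^{\delta})u_V$ are not obviously sub/supersolutions, because $\Delta(|x|^{\delta}u_V)$ contains the cross term $2\nabla(|x|^{\delta})\cdot\nabla u_V$, which is singular of order $d_\Sigma^{-n/2}$ at $\partial V$ and has no favorable sign against the nonlinearity; in \cite{HJS2024} the bound $|u/u_V-1|\le C d_\Sigma^{\tau}|x|^{\gamma_1}$ is itself a substantial theorem resting on the spectral gap, not a one-line comparison. Second, the weighted linear theory: a barrier $\rho^{a}$ for the operator $\Delta_\theta-\kappa\rho^{-2}+\cdots$ works only at the indicial exponent $s$ with $s(s-1)=\kappa$, where the leading singular terms cancel and leave lower-order terms of indeterminate sign, so the maximum principle is unavailable; this is why the decay $\rho^{s}$ of eigenfunctions and of the resolvent (Theorem~\ref{thm:weightedEV}, Lemmas~\ref{lem:Linfinity} and \ref{lem:INFINITESUB} here) is obtained via Hardy inequalities, variational solvability, and a rescaling/compactness contradiction argument rather than by the boundary-layer/Fermi-coordinate analysis you suggest --- which is in any case delicate for merely Lipschitz $\Sigma$, where $\rho=\xi^{-2/(n-2)}$ itself must serve as the boundary defining function. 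Finally, the bookkeeping claim that the log power at $\gamma_i$ is at most $i-1$ is asserted rather than proved; it requires the observation (as in the resonance analysis of Proposition~\ref{prop:construct}) that a new factor of $t$ can be created at level $i$ only by a resonance with strictly lower levels, hence at most one new logarithm per level. With these three inputs supplied, your outline does assemble into the proof of \cite{HJS2024}.
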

We note that the sequence $\{\gamma_i\}_{i=1}^\infty$ in Theorem~\ref{thm:HJS}, which we call the index set and denote by $\mathcal{I}$, associated to $\Sigma$ is determined solely by the cone $V$ (or equivalently the spherical domain $\Sigma$), and is independent of the particular solution $u$ of \eqref{eq:LNFINITECONE}-\eqref{eq:LNFINITECONEB}. Moreover, the growth rate $\tau$ near $\partial \Sigma$ is also determined only by $\Sigma$, and it can be shown that $\tau > \frac{n-2}{2}$. On the other hand, the bounded smooth functions $c_{ij}(\theta)$ defined on $\Sigma$ are determined by the specific solution $u$, and can be computed in a rather mechanical way. Naturally, the expression 
\begin{equation}\label{eq:approxsoln}
u_V(x)\bigl[1+\sum_{i=1}^m \;\sum_{j=0}^{i-1}
     c_{ij}(\theta)\,|x|^{\gamma_i}\,(-\ln|x|)^j\bigr]
\end{equation}
is considered an approximate solution of \eqref{eq:LNFINITECONE}-\eqref{eq:LNFINITECONEB}. The estimate \eqref{eq:HJS} simply asserts that any positive solution $u$ of \eqref{eq:LNFINITECONE}-\eqref{eq:LNFINITECONEB} is well approximated by an approximate solution of the form \eqref{eq:approxsoln}. In this sense, \eqref{eq:HJS} associates to each actual solution 
a canonical finite asymptotic expansion.

In this paper, we study the converse question: given an approximate solution $\hat{u}$ of \eqref{eq:LNFINITECONE}-\eqref{eq:LNFINITECONEB} of a certain order, can we construct an actual solution $u$ of \eqref{eq:LNFINITECONE}-\eqref{eq:LNFINITECONEB} which is well approximated by $\hat{u}$? To answer this question, we must first define an appropriate notion of approximate solution, which is modeled after \eqref{eq:approxsoln}. In contrast to Theorem~\ref{thm:HJS}, where the coefficients 
$c_{ij}(\theta)$ are extracted from a given solution, in our construction 
the roles are reversed: we begin by prescribing a choice of asymptotic data and then construct an actual solution realizing that data. In particular, the “free data’’ 
lies precisely in the finite set of constants $c_i$ appearing in the choice of solution
\[
   \eta(t,\theta)=\sum_{i=1}^{k_1} c_i e^{-\gamma_i t}\phi_i(\theta)
\]
to the linearized problem (the functions $\phi_i$ are
eigenfunctions of the linearized operator, introduced below). Here, $\gamma_1<\cdots<\gamma_{k_1}<\gamma_{k_1+1}<\cdots$ are the 
elements of the index set $\mathcal{I}$ listed in increasing order, and 
$k_1$ is the largest index such that $\gamma_{k_1}$ is 
\emph{nonresonant}—that is, $\gamma_{k_1+1}$ is the first element of 
$\mathcal{I}$ that can be written as a nontrivial finite linear 
combination of $\gamma_1,\dots,\gamma_{k_1}$ with nonnegative integer coefficients.  
Once the constants $c_1,\dots,c_{k_1}$ are chosen, all higher-order 
coefficients $c_{ij}(\theta)$ are uniquely determined by the recursive 
structure of the nonlinear equation.  
Thus the asymptotic expansion is encoded by this finite list 
of free constants. We begin with some standard reductions. Write $\rho= \xi^{-\frac{2}{n-2}}$. As shown in \cite{HJS2024}, $\rho(\theta)$ is compatible with the distance function from $\Sigma$ to $\partial \Sigma$, i.e. $\rho(\theta)\sim d_\Sigma (\theta)$. Thus $\rho$ serves as a boundary defining function on $\Sigma$. 

To state our main result, we first introduce the operator
\begin{equation*}
    \mathcal{M}(u)= \Delta u - \tfrac14\,n(n-2)\,u^{\frac{n+2}{n-2}}.
\end{equation*}
\begin{theorem}\label{thm:MAINRESULT}
Let $\xi$ be the positive smooth function as in \eqref{eq:firstxi}, $\mathcal{I}$ the index set associated with $\Sigma$, and $\mu>\gamma_1$ with $\mu\notin\mathcal{I}$. Suppose that $\hat{u}$ is a smooth function in $V\cap B_1$ satisfying 
\begin{equation}\label{eq:assump1}
    \big||x|^{\frac{n-2}{2}}\hat{u}(x)-\xi(\theta)\big|+\rho(\theta)|x|\,\big|\nabla(|x|^{\frac{n-2}{2}}\hat{u}(x)-\xi(\theta))\big|\leq \rho(\theta)^\frac{n+2}{2} \epsilon(|x|),
\end{equation}
where $\epsilon$ is a decreasing function such that $\epsilon(|x|) \to 0$ as $|x|\to 0$, and, 
\begin{equation}\label{eq:assump2}
    |x|^{\frac{n+2}{2}}\Big(\big|\mathcal{M}(\hat{u})(x)\big|+\rho(\theta)|x|\,\big|\nabla\big(\mathcal{M}(\hat{u})\big)(x)\big|\Big)\leq C\rho(\theta)^\frac{n-2}{2}|x|^{\mu},
\end{equation}
for some positive constant $C$. Then, there exists a $R\in(0,1)$ and a positive solution $u$ of \eqref{eq:LNFINITECONE}-\eqref{eq:LNFINITECONEB} in $V\cap B_R$ such that, for any $x\in V\cap B_R$,
\begin{equation*}
    |x|^{\frac{n-2}{2}}|u(x)-\hat{u}(x)|\leq C'\rho(\theta)^{\frac{n+2}{2}}|x|^{\mu},
\end{equation*}
where $C'$ is a positive constant.
\end{theorem}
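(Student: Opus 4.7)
The plan is to realize the unknown solution as $u=\hat u+|x|^{-(n-2)/2}v$ and solve a nonlinear elliptic problem for the correction $v$ by a contraction-mapping argument, once a suitable weighted linear theory is established. After the conformal change to cylindrical coordinates $t=-\ln|x|$ and the substitution $w(t,\theta)=|x|^{(n-2)/2}u$, $\hat w(t,\theta)=|x|^{(n-2)/2}\hat u$, equation \eqref{eq:LNFINITECONE} becomes
\[
\mathcal N(w):=w_{tt}+\Delta_\Sigma w-\tfrac{(n-2)^{2}}{4}w-\tfrac{n(n-2)}{4}w^{(n+2)/(n-2)}=0
\]
on the half-cylinder $[T_{0},\infty)\times\Sigma$ with $T_{0}=-\ln R$ and $w\to\infty$ on $\partial\Sigma$. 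Setting $w=\hat w+v$ and Taylor-expanding produces $Lv=-\mathcal N(\hat w)-Q(v)$, where
\[
Lv:=v_{tt}+\Delta_\Sigma v-\tfrac{(n-2)^{2}}{4}v-\tfrac{n(n+2)}{4}\hat w^{4/(n-2)}v,
\]
and $Q(v)=O(\hat w^{(6-n)/(n-2)}v^{2})$ is the quadratic remainder. Assumption \eqref{eq:assump2} translates precisely to $|\mathcal N(\hat w)|+e^{-t}\rho|\nabla_{t,\theta}\mathcal N(\hat w)|\lesssim\rho^{(n-2)/2}e^{-\mu t}$.

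The heart of the proof is a weighted invertibility result for $L$. Since \eqref{eq:assump1} gives $\hat w=\xi+o(1)$, the operator $L$ is a small lower-order perturbation of the model $L_{0}:=\partial_{t}^{2}+\Delta_\Sigma-\tfrac{(n-2)^{2}}{4}-\tfrac{n(n+2)}{4}\xi^{4/(n-2)}$. Separation of variables identifies the indicial exponents of $L_{0}$ with $\mathcal I$: the spatial operator has discrete spectrum $\{-\gamma_{i}^{2}\}$ with eigenfunctions $\phi_{i}(\theta)$, and $L_{0}(\phi_{i}e^{-\gamma_{i}t})=0$. Combining modewise Green's-function estimates along $t$ with weighted Schauder bounds at $\partial\Sigma$ (in the spirit of \cite{HJS2024}), I will show that for every source $f$ with $|f|+e^{-t}\rho|\nabla f|\leq\rho^{(n-2)/2}e^{-\mu t}$, the Dirichlet problem $Lv=f$ with $v=0$ on $\partial\Sigma\cup\{t=T_{0}\}$ admits a unique solution obeying
\[
|v|+e^{-t}\rho|\nabla v|\leq C\rho^{(n+2)/2}e^{-\mu t}.
\]
The hypothesis $\mu>\gamma_{1}$ and $\mu\notin\mathcal I$ is exactly what ensures that $\mu$ avoids every indicial root of $L_{0}$, so each mode's Green's function decays as $e^{-\mu t}$ without logarithmic corrections; the two-order gain from $\rho^{(n-2)/2}$ to $\rho^{(n+2)/2}$ reflects the Hardy-type singularity $\xi^{4/(n-2)}\sim\rho^{-2}$ at $\partial\Sigma$.

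Given this linear theory, a contraction argument closes the proof. In the Banach space
\[
X_{\mu}=\bigl\{v:\|v\|_{X_\mu}:=\sup\bigl(\rho^{-(n+2)/2}e^{\mu t}|v|+\rho^{-(n-2)/2}e^{(\mu-1)t}|\nabla v|\bigr)<\infty\bigr\},
\]
the quadratic remainder satisfies $|Q(v)|\lesssim\rho^{(n-6)/2}|v|^{2}\leq\|v\|_{X_\mu}^{2}\rho^{(3n-2)/2}e^{-2\mu t}$, which lies in the source space with norm $O(\|v\|_{X_\mu}^{2}e^{-\mu T_{0}})$. Hence $\mathcal T(v):=L^{-1}(-\mathcal N(\hat w)-Q(v))$ satisfies $\|\mathcal T(0)\|_{X_\mu}\leq C_{0}$, and on the ball of radius $2C_{0}$ it is a contraction once $R$ is chosen small enough to make $e^{-\mu T_{0}}$ small. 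The unique fixed point yields $u=\hat u+|x|^{-(n-2)/2}v$ solving \eqref{eq:LNFINITECONE}-\eqref{eq:LNFINITECONEB}; positivity follows because $|v|$ is vastly smaller than $\hat w\sim\rho^{-(n-2)/2}$ throughout $V\cap B_{R}$.

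The principal obstacle is the weighted linear theory itself. One must simultaneously control the indicial structure at the tip $t\to\infty$, where Fredholm analysis on the half-cylinder yields invertibility at weight $e^{-\mu t}$ exactly when $\mu\notin\mathcal I$, and the degenerate Hardy-type behavior at $\partial\Sigma$, where the singular coefficient in $L_{0}$ forces the precise two-order $\rho$-weight gain. Checking that both estimates survive the small perturbation $L-L_{0}$ controlled by \eqref{eq:assump1} forms the technical heart of the argument; once this is in hand, the nonlinear fixed-point step is routine.
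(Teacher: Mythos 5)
Your overall strategy coincides with the paper's: pass to cylindrical coordinates $t=-\ln|x|$, write the unknown as the approximate solution plus a correction, invert the linearized operator on a weighted space with decay rate $e^{-\mu t}$ and spatial weight gaining two powers of $\rho$, and close with a contraction once $t_0=-\ln R$ is large. The one structural difference is that you linearize at $\hat w$ and treat $L-L_0$ as a perturbation to be absorbed into the linear theory, whereas the paper linearizes at $\xi$ (so the singular potential is exactly $\tfrac{n(n+2)}{4}\rho^{-2}$) and pushes the difference $\hat w^{4/(n-2)}-\xi^{4/(n-2)}$ into the nonlinear term $P(w)$, where it contributes the factor $\epsilon(t_0)+Be^{-\mu t_0}$ that makes the map a contraction. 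Both bookkeepings work; the paper's has the advantage that the linear operator to be inverted is fixed once and for all and independent of $\hat u$.

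The genuine gap is that the entire linear theory --- which you correctly identify as the heart of the matter --- is asserted rather than proved, and the method you sketch for it would not go through as stated. ``Modewise Green's-function estimates along $t$'' do work for the finitely many modes with $\gamma_i<\mu$ (this is the paper's Lemma \ref{lem:FINITESUB}), but they cannot be summed over the infinitely many remaining modes: the weighted bounds $|\phi_i|\le C_i\rho^{s}$ of Theorem \ref{thm:weightedEV} carry constants $C_i$ that are not uniform in $i$, so a mode-by-mode synthesis does not yield the pointwise estimate $|v|\le C\rho^{s}e^{-\mu t}$ with a single constant. Moreover the maximum principle is unavailable for $\mathcal L$ because of the singular zeroth-order term $-\tfrac{n(n+2)}{4}\rho^{-2}$, so one cannot substitute a barrier argument. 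The paper's resolution is genuinely different: on the orthogonal complement of the low modes it produces solutions variationally on finite cylinders $(t_0,T)\times\Sigma$ (coercivity coming from the spectral gap $\lambda_1$), and it obtains the crucial weighted sup bound (Lemma \ref{lem:Linfinity}) by a compactness/blow-up argument combined with boundary-adapted rescaled $C^1$-estimates on degenerate cylinders, followed by the degenerate Schauder estimates of Lemma \ref{lem:Apriori}. Without an argument of this type (or an equivalent), the claimed invertibility of $L$ at weight $(\mu,s)$ --- and hence the whole fixed-point scheme --- is unsupported. A secondary point: your space $X_\mu$ controls only $v$ and $\nabla v$, while closing the Schauder loop and making sense of $L^{-1}$ as a bounded map requires the full weighted $C^{2,\alpha}$ norms $\Lambda^{2,\alpha}_{\mu,s}$, with the H\"older seminorms measured on the degenerate cylinders $Q_i$ near $\partial\Sigma$.
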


Theorem \ref{thm:MAINRESULT} states that if $\hat{u}$ is an approximate solution of the Yamabe equation which is close to the radial solution up to a prescribed order, then $\hat u$ is close to an actual solution up to a certain order. The index set $\mathcal{I}$ will be defined by \eqref{eq:INDEX} and is given by the collection $\{\gamma_i\}_{i=1}^\infty$ as in Theorem \ref{thm:HJS}. The assumptions \eqref{eq:assump1} and \eqref{eq:assump2} say that $\hat u$ is an approximate solution with leading term $|x|^{-\frac{n-2}{2}}\xi(\theta)$ of order $\mu$. Theorem \ref{thm:MAINRESULT} guarantees the existence of a positive solution of the Yamabe equation near an approximate solution with the correct order of approximation and sufficiently close to the cone's vertex.

To prove Theorem \ref{thm:MAINRESULT}, we analyze the linearized operator in suitable weighted Hölder spaces defined on a small neighborhood of the vertex of the cone and construct solutions which decay up to a prescribed order $\mu>0$. We proceed by a spectral decomposition of the linearized equation onto a finite-dimensional subspace generated by a product of eigenfunctions and exponentials, where we in turn solve finitely many ordinary differential equations. In the infinite-dimensional complementary subspace, we construct solutions via a variational method and derive a crucial estimate by a rescaling method, since the maximum principle fails due to the singular zeroth-order term. This framework follows a systematic procedure to obtain solutions of nonlinear equations starting from their linearizations: one first corrects linearized solutions to produce approximate solutions at the desired order, and then perturbs these approximate solutions to obtain genuine solutions. 

There are several methods to construct solutions of \eqref{eq:LNFINITECONE}-\eqref{eq:LNFINITECONEB}. For example, we may take an arbitrary Lipschitz domain $\Omega$ such that $\Omega\cap B_1=V\cap B_1$, find the solution of \eqref{eq:LN}-\eqref{eq:LNB}, and then restrict such a solution to $V\cap B_1$. Our construction has the advantage of using the prescribed asymptotic behaviors of solutions at the vertex of the cone as the "free" data to determine solutions. Solutions constructed in this way have the desired asymptotic behavior.

The paper is organized as follows. In Section 2, we analyze the linearized operator associated with the cone problem in suitable weighted Hölder spaces and establish key estimates. In Section 3, we prove Theorem \ref{thm:MAINRESULT} using the contraction mapping theorem. In Section 4, we present a general construction of approximate solutions via perturbations of solutions to the linearized problem. Unlike the classical isolated singularity case studied by Caffarelli, Gidas, and Spruck in \cite{CGS} and later works \cite{KMPS}, \cite{HL}, and \cite{HanLi2020}, the cone problem involves boundary singularities rather than isolated singularities. While our strategy follows the framework developed by Han and Li \cite{HanLi2020}, the geometry of the cone over 
$\Sigma$ introduces new analytical difficulties that require a different set of tools.

I would like to thank Qing Han for suggesting the problem and for his persistent encouragement.


\section{Linearized Yamabe Equations}\label{sec-LYE}
We begin with a standard reduction of the Loewner–Nirenberg problem via a change of variables. First, for any $x\in\mathbb{R}^n\setminus\{0\}$, using polar coordinates, let $x=r\theta$, with $r=|x|$, $\theta\in\Sigma$. Next, switching to the cylindrical variable $t=-\ln(r)$, define
\begin{equation}\label{eq:NORMALIZE}
U(t,\theta)=r^{\frac{n-2}{2}}u(r\theta).
\end{equation}
We view the equation \eqref{eq:LN} on $\mathbb{R}_+\times\Sigma$ with the cylindrical metric $dt^2+d\theta ^2$. In these coordinates, $U$ satisfies
\begin{equation}\label{eq:CYLINDRICALLN}
  \partial_{tt}U
  + \Delta_\theta U
  - \tfrac14\,(n-2)^2\,U
  = \tfrac14\,n(n-2)\,U^{\frac{n+2}{n-2}}.
\end{equation}
Suppose $u_V$ is a positive solution of \eqref{eq:LNINFCONE}-\eqref{eq:LNINFCONEB} on the infinite cone $V$ over $\Sigma$.
Define
\begin{equation}\label{eq:xidef}
\xi(\theta)=r^{\frac{n-2}{2}}u_V(r\theta).
\end{equation}
A straightforward computation shows that $\xi$ satisfies
\begin{align}
  \Delta_\theta \xi 
    - \tfrac14(n-2)^2 \,\xi 
    &= \tfrac14\,n(n-2)\,\xi^{\frac{n+2}{n-2}} \quad \text{in }\Sigma, \label{eq:xi}
    \\
  \xi &= \infty
    \quad \text{on }\partial\Sigma. \label{eq:xiB}
\end{align}
We note that the existence and uniqueness of a smooth positive solution $\xi$ of \eqref{eq:xi}-\eqref{eq:xiB} are established by the same method as in the classical Loewner–Nirenberg problem on bounded Lipschitz domains. The existence and uniqueness of a positive smooth solution $u_V$ of \eqref{eq:LNINFCONE}-\eqref{eq:LNINFCONEB} of the form \eqref{eq:xidef} follows. See \cite{HJS2024} for details.

The function $\xi$ introduced above is a key component in the analysis throughout the rest of this paper. We summarize some of its properties below. In the proceeding results, let $d_{\Sigma}$ be the distance function on $\Sigma$ to $\partial\Sigma$.
\begin{lemma}\label{lem:gradxi}
Let $\Sigma \subsetneq S^{n-1}$ be a Lipschitz domain. Then, there exists a unique positive solution $\xi \in C^\infty(\Sigma)$ of \eqref{eq:xi}-\eqref{eq:xiB}. Moreover,
\begin{equation}\label{eq:xiBLOWUP}
    c_1 \leq d_{\Sigma}^{\frac{n-2}{2}} \xi \leq c_2 \quad \text{in } \Sigma,
\end{equation}
and for any integer $k \geq 0$,
\begin{equation}\label{eq:xigradientbound}
    d_{\Sigma}^{\frac{n-2}{2}+k} |\nabla_\theta^k \xi| \leq C \quad \text{in } \Sigma,
\end{equation}
where $c_1$, $c_2$, and $C$ are positive constants depending only on $n$, $k$, and $\Sigma$. 
\end{lemma}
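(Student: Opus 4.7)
My plan is to prove the three assertions in turn. Note first that \eqref{eq:xi} takes the form $\Delta_\theta \xi = f(\xi)$ with $f(\xi) = \tfrac14(n-2)^2 \xi + \tfrac14 n(n-2)\xi^{(n+2)/(n-2)}$, and both $f$ and $\xi \mapsto f(\xi)/\xi$ are strictly increasing on $(0,\infty)$; this is the structural property that drives the classical Loewner--Nirenberg machinery, and it adapts directly to the present spherical setting in spite of the extra linear term.

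For existence and uniqueness, I would exhaust $\Sigma$ by smooth subdomains $\Sigma_k \Subset \Sigma_{k+1}$ with $\bigcup_k \Sigma_k = \Sigma$, and on each $\Sigma_k$ apply the classical Loewner--Nirenberg construction to obtain a smooth positive solution $\xi_k$ with $\xi_k = \infty$ on $\partial\Sigma_k$. Monotonicity of $f$ and the comparison principle give $\xi_{k+1} \le \xi_k$ on $\Sigma_k$, while the barrier functions of the form $A d_\Sigma^{-(n-2)/2}$ built in the next step furnish a uniform lower bound on each compact subset. Monotone convergence together with standard interior Schauder estimates then produce $\xi \in C^\infty(\Sigma)$. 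Uniqueness follows from the monotonicity of $f(\xi)/\xi$ by a standard comparison argument: for any two positive solutions $\xi_1, \xi_2$, compare $\xi_1$ with $(1+\varepsilon)\xi_2$, use the maximum principle, and send $\varepsilon \downarrow 0$ to conclude $\xi_1 \le \xi_2$; the reverse inequality is symmetric.

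For the two-sided bound \eqref{eq:xiBLOWUP}, I would construct sub- and super-barriers of the form $A \rho^{-(n-2)/2}$ in a boundary collar of $\Sigma$, where $\rho$ is a smooth regularized distance comparable to $d_\Sigma$ (such a $\rho$ exists for Lipschitz $\Sigma$). Writing $\alpha = (n-2)/2$, the leading singular contribution from $\Delta_\theta(A\rho^{-\alpha})$ is $\alpha(\alpha+1) A |\nabla\rho|^2 \rho^{-\alpha-2}$, which has the same power $\rho^{-\alpha-2}$ as the nonlinear term $\tfrac14 n(n-2)(A\rho^{-\alpha})^{(n+2)/(n-2)}$; because $\alpha(\alpha+1) = \tfrac{n(n-2)}{4}$ and $|\nabla\rho|^2 \to 1$ as $\rho \to 0$, the leading coefficients balance precisely when $A = 1$, and a small upward or downward perturbation of $A$ makes the residual errors subleading. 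Comparison of these barriers with $\xi$ on the collar, using boundary blow-up on $\partial\Sigma$ and interior boundedness on the inner edge, yields $c_1 \le d_\Sigma^{(n-2)/2} \xi \le c_2$.

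For the gradient bound \eqref{eq:xigradientbound}, I would fix $\theta_0 \in \Sigma$ with $d := d_\Sigma(\theta_0)$ small, work in a local chart on $S^{n-1}$ centered at $\theta_0$, and rescale via $\tilde\xi(y) = d^{(n-2)/2} \xi(\exp_{\theta_0}(d y))$ on $B_{1/2}(0) \subset \mathbb{R}^{n-1}$. Since $d_\Sigma$ is comparable to $d$ on $B_{d/2}(\theta_0) \subset \Sigma$, \eqref{eq:xiBLOWUP} gives uniform positive two-sided bounds for $\tilde\xi$, and the rescaled equation has bounded smooth coefficients with the linear zeroth-order term carrying a harmless $d^2$ factor. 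Interior Schauder estimates and a standard bootstrap then give $\|\tilde\xi\|_{C^k(B_{1/4}(0))} \le C_k$, and unscaling yields \eqref{eq:xigradientbound}. The step I expect to be the most delicate is the barrier construction for \eqref{eq:xiBLOWUP}: because $\partial\Sigma$ is only Lipschitz, the Euclidean distance is not twice differentiable, so one must use a regularized distance with controlled second derivatives (of the type developed in \cite{HJS2024}) to ensure the error terms that I informally called $O(\rho^{-\alpha-1})$ are genuinely of lower order than the dominant $\rho^{-\alpha-2}$ singularity.
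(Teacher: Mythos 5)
The paper offers no written proof of this lemma---it simply appeals to the classical Loewner--Nirenberg machinery and to \cite{HJS2024}---and your overall strategy (interior exhaustion plus comparison for existence, barriers for \eqref{eq:xiBLOWUP}, rescaling plus Schauder bootstrap for \eqref{eq:xigradientbound}) is exactly that standard route. The scaling argument for \eqref{eq:xigradientbound} is correct as written, and the exponent bookkeeping in your barrier computation ($\alpha(\alpha+1)=\tfrac14 n(n-2)$, matching powers $\rho^{-\alpha-2}$) is right. However, two steps do not survive the stated hypothesis that $\Sigma$ is merely \emph{Lipschitz}. First, the lower bound in \eqref{eq:xiBLOWUP}: for a Lipschitz domain a regularized distance satisfies only $c\le|\nabla_\theta\rho|\le C$ (not $|\nabla_\theta\rho|\to 1$) and $|\nabla_\theta^2\rho|\le K/d_\Sigma$, so the Hessian contribution $-\alpha A\rho^{-\alpha-1}\Delta_\theta\rho$ is of the \emph{same} order $\rho^{-\alpha-2}$ as the term you treat as dominant, and for large Lipschitz constant it can overwhelm $\alpha(\alpha+1)A|\nabla_\theta\rho|^2\rho^{-\alpha-2}$; no choice of small $A$ then produces a subsolution. (The upper barrier survives, because taking $A$ large makes the superlinear term $A^{\frac{n+2}{n-2}}$ dominate the Hessian error.) The standard repair, and the one used in \cite{HJS2024}, is domain monotonicity of the blow-up solution combined with explicit model solutions: geodesic balls $B_{d_\Sigma(\theta)}(\theta)\subset\Sigma$ give the upper bound, and the uniform exterior cone condition of a Lipschitz domain gives the lower bound via the (homogeneity-degree $-\tfrac{n-2}{2}$) solution on the complement of a cone. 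Your barrier argument as written is correct only for $C^{1,\alpha}$ boundaries, where $|\nabla_\theta^2\rho|\le Cd_\Sigma^{\alpha-1}$ makes the error genuinely subleading.

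Second, your uniqueness argument is incomplete: comparing $\xi_1$ with $(1+\varepsilon)\xi_2$ by the maximum principle requires knowing that $\xi_1\le(1+\varepsilon)\xi_2$ near $\partial\Sigma$, but both functions blow up there and \eqref{eq:xiBLOWUP} only gives $\xi_1/\xi_2\le c_2/c_1$, which may exceed $1+\varepsilon$; for a Lipschitz boundary one does not know $\xi_1/\xi_2\to1$. The maximum of $\xi_1-(1+\varepsilon)\xi_2$ could therefore be attained only in the limit at $\partial\Sigma$, and the argument does not close. The standard fix is again geometric: compare any solution with the blow-up solutions on an interior exhaustion $\{d_\Sigma>\tau\}$ and on exterior approximations of $\Sigma$, and show the two limits coincide (this is where the Lipschitz regularity, via a uniform bi-Lipschitz perturbation of the domain, enters). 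With these two substitutions your sketch matches the intended proof; as written, the lower bound in \eqref{eq:xiBLOWUP} and the uniqueness step are genuine gaps in the Lipschitz category.
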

Estimates of the form \eqref{eq:xiBLOWUP}-\eqref{eq:xigradientbound} are standard in the discussion of the Loewner-Nirenberg problem and the classical proofs are easily modified to obtain \eqref{eq:xiBLOWUP}-\eqref{eq:xigradientbound} for solutions $\xi$ of \eqref{eq:xi}-\eqref{eq:xiB}.

In light of the blow-up rate \eqref{eq:xiBLOWUP}, we set 
\begin{equation}\label{eq:rhodef}
\rho= \xi^{-\frac{2}{n-2}}
\end{equation}
and obtain an equivalent version of \eqref{eq:xi}-\eqref{eq:xiB} given by
\begin{align}
  \rho\,\Delta_\theta\rho + S\,\rho^2 
    &= \frac{n}{2}\bigl(|\nabla_\theta\rho|^2 - 1\bigr)
    \quad \text{in }\Sigma, \label{eq:rho}
\\
  \rho &= 0 \quad \text{on }\partial\Sigma, \label{eq:rhoB}
\end{align}
where $S$ is a constant defined by
\begin{equation}\label{eq:S}
S=\frac{1}{2}(n-2).
\end{equation}
The constant $S$ introduced above is related to the scalar curvature of $\mathbb{S}^{n-1}$; in flat Euclidean space, we have $S=0$. 

The role of $\rho$ is central to this paper. We summarize some of its properties below.
\begin{lemma}\label{lem:rhodist}
Let $\Sigma \subsetneq \mathbb{S}^{n-1}$ be a Lipschitz domain. Then, there exists a unique function $\rho \in C^\infty(\Sigma) \cap \mathrm{Lip}(\Sigma)$, positive in $\Sigma$ and satisfying \eqref{eq:rho}-\eqref{eq:rhoB}. Moreover,
\begin{equation}\label{eq:COMP}
  c_1 \leq \frac{\rho}{d_{\Sigma}} \leq c_2 \quad \text{in } \Sigma,
\end{equation}
where $c_1$ and $c_2$ are positive constants depending only on $n$ and $\Sigma$.
\end{lemma}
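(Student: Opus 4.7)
The plan is to deduce Lemma \ref{lem:rhodist} directly from Lemma \ref{lem:gradxi} via the change of variable $\rho = \xi^{-2/(n-2)}$. Since $\xi$ is positive and smooth on $\Sigma$, this map is a bijection between positive smooth functions, so the existence and uniqueness of $\rho$ as a positive smooth function in $\Sigma$ reduce to the existence and uniqueness of $\xi$ already furnished by Lemma \ref{lem:gradxi}.

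First I would verify that $\rho$ satisfies \eqref{eq:rho}--\eqref{eq:rhoB} by direct substitution. Setting $a = (n-2)/2$, the chain rule gives
\[
\Delta_\theta \xi = a(a+1)\,\rho^{-a-2}\,|\nabla_\theta \rho|^2 - a\,\rho^{-a-1}\,\Delta_\theta \rho,
\]
and the equation \eqref{eq:xi} becomes, after multiplication by $\rho^{a+2}/a$ and elementary simplification of the exponents (using $a+2=(n+2)/2$ and $\tfrac{n+2}{n-2}\cdot a = (n+2)/2$), exactly \eqref{eq:rho} with $S=(n-2)/2$. The boundary condition $\rho=0$ on $\partial\Sigma$ follows from $\xi\to\infty$ on $\partial\Sigma$ together with \eqref{eq:xiBLOWUP}, which also furnishes continuous extension to the boundary.

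The comparison \eqref{eq:COMP} is then immediate from \eqref{eq:xiBLOWUP}: raising $c_1 \leq d_\Sigma^{(n-2)/2}\,\xi \leq c_2$ to the power $-2/(n-2)$ yields $c_2^{-2/(n-2)}\, d_\Sigma \leq \rho \leq c_1^{-2/(n-2)}\, d_\Sigma$ in $\Sigma$.

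Finally, for Lipschitz regularity up to $\partial\Sigma$, I would use
\[
\nabla_\theta \rho = -\tfrac{2}{n-2}\,\xi^{-n/(n-2)}\,\nabla_\theta \xi,
\]
combined with the lower bound $\xi \geq c\, d_\Sigma^{-(n-2)/2}$ from \eqref{eq:xiBLOWUP} and the $k=1$ gradient estimate $|\nabla_\theta \xi| \leq C\, d_\Sigma^{-n/2}$ from \eqref{eq:xigradientbound}. The exponents $n/2$ and $-n/2$ cancel, giving $|\nabla_\theta \rho| \leq C$ uniformly in $\Sigma$, which together with the continuous boundary value $\rho=0$ on $\partial\Sigma$ implies $\rho \in \mathrm{Lip}(\overline{\Sigma})$. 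Because Lemma \ref{lem:gradxi} carries all of the analytical weight, no new obstacle arises here; the only point requiring care is the bookkeeping of exponents in this last step, where the precise choice of power $-2/(n-2)$ is exactly what produces a bounded gradient and thereby converts a singular function into a Lipschitz boundary defining function.
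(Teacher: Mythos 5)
Your proposal is correct and follows exactly the route the paper indicates: the paper's own justification is the one-line remark that \eqref{eq:COMP} follows from \eqref{eq:xiBLOWUP} and the Lipschitz regularity from \eqref{eq:xigradientbound} with $k=1$, via the substitution $\rho=\xi^{-2/(n-2)}$. Your exponent bookkeeping in the substitution and in the gradient bound checks out, so you have simply filled in the details the paper leaves implicit.
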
 
We note that the proof of \eqref{eq:COMP} follows from \eqref{eq:xiBLOWUP} and the assertion $\rho\in\mathrm{Lip}(\Sigma)$ follows from \eqref{eq:xigradientbound} with $k=1$. If we assume higher boundary regularity, we also have the following information on the gradient.
\begin{lemma}\label{lem:rhoGRAD}
Let $\Sigma \subset \mathbb{S}^{n-1}$ be a $C^{1,\alpha}$ domain for some $\alpha \in (0,1)$, and let $\rho \in C^\infty(\Sigma) \cap \mathrm{Lip}(\Sigma)$ be the positive solution of \eqref{eq:rho}-\eqref{eq:rhoB} in $\Sigma$. Then $\rho \in C^{1,\alpha}(\overline{\Sigma})$, and
\[
|\nabla_\theta \rho| = 1 \quad \text{on } \partial\Sigma.
\]
\end{lemma}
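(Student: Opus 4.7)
My plan is to work locally near each boundary point $p\in\partial\Sigma$, exploiting the $C^{1,\alpha}$ regularity of $\partial\Sigma$ to flatten the boundary by a $C^{1,\alpha}$ diffeomorphism; interior smoothness is already supplied by Lemma~\ref{lem:rhodist}, so only the boundary behavior is at stake. The guiding heuristic is the formal identity obtained by sending $\rho\to 0$ in \eqref{eq:rho}: the left-hand side vanishes, forcing $|\nabla_\theta\rho|^2=1$ on $\partial\Sigma$. The task is to make this rigorous and simultaneously upgrade $\rho$ from Lipschitz to $C^{1,\alpha}$ up to the boundary.

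First I would establish a sharp comparison of $\rho$ with the distance function $d_\Sigma$. Classical Loewner--Nirenberg boundary asymptotics applied to \eqref{eq:xi}--\eqref{eq:xiB} (the analogue of \eqref{eq:LNblowup}, see also \cite{HJS2024}) give
\[
\bigl|\,d_\Sigma^{(n-2)/2}\,\xi-1\,\bigr|\le C\,d_\Sigma^{\alpha}
\quad\text{in a neighborhood of }\partial\Sigma.
\]
Since $\rho=\xi^{-2/(n-2)}$, a Taylor expansion of $s\mapsto s^{-2/(n-2)}$ about $s=1$ converts this into
\[
|\rho-d_\Sigma|\le C\,d_\Sigma^{\,1+\alpha}.
\]
Because $\partial\Sigma$ is $C^{1,\alpha}$, the function $d_\Sigma$ is itself $C^{1,\alpha}$ in a one-sided tubular neighborhood of $\partial\Sigma$, with $|\nabla_\theta d_\Sigma|=1$ on $\partial\Sigma$. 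So once this pointwise $(1+\alpha)$-order comparison is promoted to a derivative-level statement, both assertions of the lemma will follow.

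To carry out this promotion, I would fix a boundary chart flattening $\partial\Sigma$ to $\{y_{n-1}=0\}$ and write the pulled-back solution as $\tilde\rho(y)=y_{n-1}+w(y)$, with $|w|\lesssim y_{n-1}^{\,1+\alpha}$ from the previous step. Substituting into the pullback of \eqref{eq:rho} yields a quasilinear second-order equation for $w$ whose principal coefficient vanishes linearly at $\{y_{n-1}=0\}$, with coefficient and source terms of class $C^{\alpha}$ inherited from the $C^{1,\alpha}$ flattening and from the regularity of $\rho$ and $d_\Sigma$ recorded in Lemmas~\ref{lem:rhodist}--\ref{lem:gradxi}. A Campanato-type iteration — equivalently, boundary Schauder estimates adapted to Fichera-type degenerate elliptic operators — then upgrades $w$ to $C^{1,\alpha}$ up to $\{y_{n-1}=0\}$ with $\nabla w=0$ on that face. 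Pulling back gives $\rho\in C^{1,\alpha}(\overline\Sigma)$ and $|\nabla_\theta\rho|=|\nabla_\theta d_\Sigma|=1$ on $\partial\Sigma$.

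The main obstacle will be the degeneracy of \eqref{eq:rho}: the coefficient of $\Delta_\theta\rho$ vanishes on $\partial\Sigma$, so classical boundary Schauder theory does not apply directly, and one must also contend with the quasilinear $|\nabla_\theta\rho|^2$ term. The comparison-and-subtract scheme above is precisely what decouples these two difficulties — pre-subtracting $d_\Sigma$ turns the problem into a regularity question for a small remainder $w$ governed by an essentially linear equation with controllable coefficients, leaving only the (well-understood) degenerate-elliptic boundary iteration to close the argument.
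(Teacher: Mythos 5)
The paper does not actually prove this lemma; it is imported from \cite{HJS2024} without argument, so your proposal has to stand on its own. It does not: the step on which everything else rests --- ``because $\partial\Sigma$ is $C^{1,\alpha}$, the function $d_\Sigma$ is itself $C^{1,\alpha}$ in a one-sided tubular neighborhood of $\partial\Sigma$'' --- is false for $\alpha\in(0,1)$. The distance function inherits the regularity of the boundary only from $C^{1,1}$ (or $C^2$) upward; for a genuinely $C^{1,\alpha}$ boundary the set of points with non-unique nearest boundary point can accumulate at $\partial\Sigma$, so $d_\Sigma$ need not be differentiable in any collar. A model computation: for the planar domain above the graph $y=|x|^{1+\alpha}$, a point $(0,t)$ with $t>0$ small has its nearest boundary points at $(\pm x_*,x_*^{1+\alpha})$ with $x_*>0$ (minimizing $x^2+(t-x^{1+\alpha})^2$, the derivative $2x-2(1+\alpha)x^{\alpha}(t-x^{1+\alpha})$ is negative for small $x>0$ since $\alpha<1$), so by symmetry $d$ is not $C^1$ at points on the axis arbitrarily close to the origin. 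This breaks your scheme in two places: the target function you subtract is not $C^{1,\alpha}$, and the coefficients/source of the resulting equation for $w$ are then only $L^\infty$ rather than $C^{\alpha}$, which is exactly the input your Campanato/degenerate-Schauder iteration needs. (That iteration is itself asserted rather than carried out, but the more basic problem is that its hypotheses are not available.)

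The repair is the standard one in this circle of ideas: replace $d_\Sigma$ by a \emph{linear} comparison function at each fixed boundary point $\theta_0$ (the distance to the tangent plane, or $y_{n-1}$ in a chart centered at $\theta_0$), and prove the \emph{pointwise} first-order expansion $|\rho(\theta)-\ell_{\theta_0}(\theta)|\le C|\theta-\theta_0|^{1+\alpha}$ by barrier constructions exploiting the $C^{1,\alpha}$ graph --- your first step, $|\rho-d_\Sigma|\le Cd_\Sigma^{1+\alpha}$, only yields this along the normal fiber, not uniformly in a chart. One then combines the pointwise expansion with scaled interior Schauder estimates on Whitney balls $B_{d(\theta)/2}(\theta)$ (which give $|\nabla^2\rho|\le Cd_\Sigma^{\alpha-1}$, as in the proof of \eqref{eq:xigradientbound}) to assemble $\nabla\rho\in C^{\alpha}(\overline{\Sigma})$. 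The identity $|\nabla_\theta\rho|=1$ on $\partial\Sigma$ then does follow from \eqref{eq:rho} as in your heuristic, but note that this requires $\rho\,\Delta_\theta\rho\to0$ at the boundary, i.e.\ the second-derivative bound $|\Delta_\theta\rho|\le Cd_\Sigma^{\alpha-1}=o(\rho^{-1})$; this should be stated explicitly rather than absorbed into the claim $\nabla w=0$ on the flat face.
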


Next, we turn our attention to the equation \eqref{eq:CYLINDRICALLN} linearized at $\xi$. Take a positive solution $u\in C^\infty(V\cap B_1)$ of \eqref{eq:LNFINITECONE}-\eqref{eq:LNFINITECONEB} and the unique positive solution $u_V\in C^\infty(V)$ of \eqref{eq:LNINFCONE}-\eqref{eq:LNINFCONEB}. For any $x\in\mathbb{R}^n\setminus \{0\}, $ in cylindrical coordinates $t=-\ln(r)$ and $r=|x|$, define, for $(t,\theta)\in\mathbb{R}\times\Sigma$,
\begin{equation}
v(t,\theta)=|x|^{\frac{n-2}{2}}(u-u_V)=|x|^{\frac{n-2}{2}}u-\xi(\theta)=|x|^{\frac{n-2}{2}}u-\rho^{-\frac{n-2}{2}}(\theta).
\end{equation}
Following \cite{HJS2024}, a straightforward computation yields
\begin{equation}\label{eq:LvFv}
\mathcal{L}v=F(v),
\end{equation}
where
\begin{equation}\label{eq:ELLv}
\mathcal{L}v=\partial_{tt}v+\Delta_\theta v-\frac{1}{4}(n-2)^2v-\frac{1}{4}n(n+2)\frac{v}{\rho^2},
\end{equation}
and 
\begin{equation}
F(v)=\rho^{\frac{n-6}{2}}v^2h(\rho^{\frac{n-2}{2}}v),
\end{equation}
where $h\in C^\infty ((-1,1))$ and $h(0)=0$. In fact, 
\begin{equation}
h(s)=\frac{1}{4}n(n-2)s^{-2}\big[(1+s)^{\frac{n+2}{n-2}}-1-\frac{n+2}{n-2}s\big].
\end{equation}
We are interested in the spectral decomposition of \eqref{eq:ELLv}. To this end, we define, for a fixed positive constant $\kappa$ and for any $u\in C^2(\Sigma)$
\begin{equation}\label{eq:ANGOP}
  L u
  = \Delta_\theta u
  - \frac{\kappa}{\rho^2}\,u.
\end{equation}
Observe that $L$ is a linear operator $\Sigma$ with a singular zeroth order coefficient by \eqref{eq:rhoB}. Next, we recall the existence of weak solutions of $-Lu=f$. The proof of the following result relies on Hardy's inequality. See \cite{HJS2024} for details.
\begin{lemma}\label{lem:HARDY}
Let $\Sigma\subsetneq \mathbb{S}^{n-1}$ be a Lipschitz domain and $\rho\in C^\infty (\Sigma)\cap \text{Lip}(\Sigma)$ be a positive function in $\Sigma$ satisfying \eqref{eq:rho}-\eqref{eq:rhoB}. Then, for any $f\in L^2(\Sigma)$, there exists a unique weak solution $u\in H^1_0(\Sigma)$ of $Lu=-f$, and 
\begin{equation*}
    {\lVert \nabla_\theta u\rVert}_{L^2(\Sigma)}+{\lVert \rho^{-1}u\rVert}_{L^2(\Sigma)} \leq C{\lVert f\rVert}_{L^2(\Sigma)},
\end{equation*}
\textit{where $C$ is a positive constant depending only on $n$ and $\Sigma$. Moreover, $v\in C^\infty(\Sigma)$.}
\end{lemma}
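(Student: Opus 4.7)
The strategy is standard variational: set up a coercive bilinear form on $H^1_0(\Sigma)$ and invoke Lax--Milgram, with Hardy's inequality providing the crucial bridge that turns the singular zeroth-order term into a friend rather than a foe. Define
\[
  a(u,v)=\int_\Sigma\bigl(\nabla_\theta u\cdot\nabla_\theta v+\kappa\rho^{-2}uv\bigr)\,d\theta,
  \qquad \ell(v)=\int_\Sigma f v\,d\theta.
\]
By Lemma~\ref{lem:rhodist}, $\rho$ is comparable to $d_\Sigma$, so the classical Hardy inequality on the Lipschitz domain $\Sigma$ yields
\[
  \lVert\rho^{-1}v\rVert_{L^2(\Sigma)}\le C\,\lVert\nabla_\theta v\rVert_{L^2(\Sigma)}
  \quad\text{for all } v\in H^1_0(\Sigma).
\]
This both makes $a$ well defined and bounded on $H^1_0(\Sigma)$, and certifies that the natural norm for the problem, $\lVert\nabla_\theta u\rVert_{L^2}+\lVert\rho^{-1}u\rVert_{L^2}$, is equivalent to the $H^1_0$-norm.

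Next, I would verify coercivity: since $\kappa>0$, one has $a(u,u)\ge\lVert\nabla_\theta u\rVert_{L^2}^2$, and applying Hardy once more gives $a(u,u)\ge c\bigl(\lVert\nabla_\theta u\rVert_{L^2}^2+\lVert\rho^{-1}u\rVert_{L^2}^2\bigr)$. Boundedness of $\ell$ follows from Cauchy--Schwarz and the Poincar\'e inequality on the bounded domain $\Sigma$. Lax--Milgram then produces a unique $u\in H^1_0(\Sigma)$ with $a(u,v)=\ell(v)$ for every $v\in H^1_0(\Sigma)$; this is exactly the weak form of $-Lu=f$.

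The $L^2$-estimate is immediate by testing with $v=u$: from $a(u,u)=\int f u$ and the bound $\lvert\int fu\rvert\le\lVert f\rVert_{L^2}\lVert u\rVert_{L^2}\le C\lVert f\rVert_{L^2}\lVert\nabla_\theta u\rVert_{L^2}$, one concludes $\lVert\nabla_\theta u\rVert_{L^2}+\lVert\rho^{-1}u\rVert_{L^2}\le C\lVert f\rVert_{L^2}$ after absorbing one factor of $\lVert\nabla_\theta u\rVert_{L^2}$.

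Finally, for interior regularity, note that $\rho\in C^\infty(\Sigma)$ is strictly positive on any compact $K\Subset\Sigma$, so the coefficient $\kappa\rho^{-2}$ is smooth there. Standard local elliptic regularity (bootstrap $H^1_{\mathrm{loc}}\Rightarrow H^2_{\mathrm{loc}}\Rightarrow\cdots$) gives $u\in C^\infty(\Sigma)$. The main subtlety throughout is the Hardy estimate on a merely Lipschitz spherical domain; once Lemma~\ref{lem:rhodist} is invoked to replace $\rho$ by $d_\Sigma$, this reduces to a well-known inequality. No boundary regularity up to $\partial\Sigma$ is claimed, which is consistent with the statement.
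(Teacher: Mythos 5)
Your variational argument (Hardy's inequality via Lemma~\ref{lem:rhodist} to control the singular potential, Lax--Milgram for existence and uniqueness, testing with $u$ for the estimate) is correct and is essentially the approach the paper points to: the paper gives no proof of its own, stating only that the result "relies on Hardy's inequality" and deferring to \cite{HJS2024}. The one caveat is your final bootstrap to $C^\infty(\Sigma)$: with $f$ merely in $L^2(\Sigma)$ the iteration $H^1_{\mathrm{loc}}\Rightarrow H^2_{\mathrm{loc}}\Rightarrow\cdots$ stops at $H^2_{\mathrm{loc}}$, since each further step requires additional regularity of $f$; the interior smoothness conclusion really requires $f\in C^\infty(\Sigma)$ (as it is in the paper's actual applications, e.g.\ to the eigenfunctions), a hypothesis the lemma's statement omits.
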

By the compact embedding 
$H_0^1(\Sigma) \hookrightarrow L^2(\Sigma)$, we deduce the following result for the eigenvalue problem associated to $L$.
\begin{theorem}\label{thm:eigenvalues}
  Let $\Sigma\subsetneq\mathbb{S}^{n-1}$ be a Lipschitz domain and $\rho\in C^\infty(\Sigma)\cap \mathrm{Lip}(\Sigma)$ a positive function satisfying \eqref{eq:rho}-\eqref{eq:rhoB}. Then, there exists an increasing sequence of positive constants $\{\lambda_i\}_{i\ge1}\to\infty$ and an $L^2(\Sigma)$–orthonormal basis $\{\phi_i\}_{i\ge1}\text{ in }H^1_0(\Sigma)\cap C^\infty(\Sigma)\text{ solving }L\phi_i=-\lambda_i\phi_i$ weakly.
\end{theorem}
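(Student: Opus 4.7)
The plan is to use the standard spectral-theoretic recipe: realize $-L$ as an invertible operator whose inverse is compact and self-adjoint on $L^2(\Sigma)$, then quote the spectral theorem for compact self-adjoint operators on a Hilbert space.

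First I would define the solution map $T: L^2(\Sigma) \to L^2(\Sigma)$ by $Tf = u$, where $u \in H_0^1(\Sigma)$ is the unique weak solution of $Lu = -f$ produced by Lemma~\ref{lem:HARDY}. The Hardy-type estimate in that lemma gives
\[
\|\nabla_\theta u\|_{L^2(\Sigma)} + \|\rho^{-1} u\|_{L^2(\Sigma)} \leq C\|f\|_{L^2(\Sigma)},
\]
so in particular $\|u\|_{H_0^1(\Sigma)} \leq C\|f\|_{L^2(\Sigma)}$, showing $T$ is bounded from $L^2$ into $H_0^1$. Composing with the compact embedding $H_0^1(\Sigma) \hookrightarrow L^2(\Sigma)$ (standard for bounded Lipschitz domains, which applies to $\Sigma \subsetneq \mathbb{S}^{n-1}$) shows that $T$ is a compact operator on $L^2(\Sigma)$.

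Next I would verify that $T$ is self-adjoint and positive. The associated bilinear form
\[
B(u,v) = \int_\Sigma \nabla_\theta u \cdot \nabla_\theta v \, d\theta + \kappa \int_\Sigma \frac{uv}{\rho^2}\, d\theta
\]
is symmetric on $H_0^1(\Sigma)$ (the Hardy bound guarantees that the singular term is finite for $H_0^1$ functions), and by definition $B(Tf,\phi) = \int_\Sigma f\phi\, d\theta$ for every $\phi \in H_0^1(\Sigma)$. Taking $\phi = Tg$ gives $\langle Tf, g\rangle_{L^2} = B(Tf, Tg) = \langle f, Tg\rangle_{L^2}$, hence $T = T^*$. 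Taking $g = f$ yields $\langle Tf, f\rangle_{L^2} = B(Tf, Tf) \geq 0$, with equality forcing $Tf = 0$, which by the equation forces $f = 0$; so $T$ is positive and injective.

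Now the spectral theorem for compact self-adjoint operators provides a decreasing sequence of positive eigenvalues $\mu_i \to 0^+$ and a corresponding $L^2(\Sigma)$-orthonormal basis $\{\phi_i\}_{i \geq 1} \subset H_0^1(\Sigma)$ with $T\phi_i = \mu_i \phi_i$, equivalently $L\phi_i = -\lambda_i \phi_i$ weakly, where $\lambda_i = 1/\mu_i$ is increasing and tends to infinity. Positivity of the $\lambda_i$'s is immediate from positivity of $T$. Finally, to upgrade each $\phi_i$ to $C^\infty(\Sigma)$, I would appeal to interior elliptic regularity: on any compact subset $K \Subset \Sigma$ the coefficient $\kappa/\rho^2$ is smooth and bounded, so standard bootstrapping applied to $\Delta_\theta \phi_i = \kappa \rho^{-2}\phi_i - \lambda_i \phi_i$ yields $\phi_i \in C^\infty(K)$, hence $\phi_i \in C^\infty(\Sigma)$. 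The main subtlety to be careful about is the singular coefficient near $\partial\Sigma$, but this is precisely what Lemma~\ref{lem:HARDY} already absorbs via Hardy's inequality, so no further boundary regularity is needed for the statement as formulated.
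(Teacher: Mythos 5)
Your proposal is correct and follows essentially the same route the paper takes: the paper deduces the theorem from the solvability and Hardy-type estimate of Lemma~\ref{lem:HARDY} together with the compact embedding $H^1_0(\Sigma)\hookrightarrow L^2(\Sigma)$, which is exactly the compact, self-adjoint, positive solution-operator argument you spell out. Your treatment of the singular coefficient (absorbed by the Hardy estimate) and the interior bootstrapping for smoothness are consistent with the paper's stated regularity conclusion in Lemma~\ref{lem:HARDY}.
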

We note that the first eigenvalue $\lambda_1$ has multiplicity 1. Furthermore, the following Fredholm alternative holds.
\begin{theorem}\label{thm:Fredholm}
Let $\Sigma\subsetneq\mathbb{S}^{n-1}$ be a Lipschitz domain and 
$\rho\in C^\infty(\Sigma)\cap\mathrm{Lip}(\Sigma)$ a positive function 
satisfying \eqref{eq:rho}-\eqref{eq:rhoB}.  Let 
$\{\lambda_i\}_{i\ge1}$ and $\{\phi_i\}_{i\ge1}$ be as in Theorem \ref{thm:eigenvalues}.  
Then the following hold.
\begin{enumerate}
  \item[\textup{(i)}] 
    For any $\lambda\notin\{\lambda_i\}_{i\geq 1}$ and any $f\in L^2(\Sigma)$, there exists a unique weak solution
    $u\in H^1_0(\Sigma)$ of
    \begin{equation}\label{eq:Llambda}
      L u + \lambda\,u = f \quad\text{in }\Sigma.
    \end{equation}
    Moreover,
    \begin{equation}\label{eq:L2angOP}
      \|u\|_{H^1_0(\Sigma)} \;\le\; C\,\|f\|_{L^2(\Sigma)}.
    \end{equation}
    \item[\textup{(ii)}]
        For any $\lambda=\lambda_i$ for some $i$ and any $f\in L^2(\Sigma)$ with
        \((f,\phi_k)_{L^2(\Sigma)}=0\) for every eigenfunction $\phi_k$ corresponding to the
        eigenvalue $\lambda_i$, there exists a unique weak solution $u\in H^1_0(\Sigma)$ of
        \eqref{eq:Llambda}, satisfying \((u,\phi_k)_{L^2(\Sigma)}=0\) for every eigenfunction $\phi_k$
        corresponding to $\lambda_i$. Moreover, \eqref{eq:L2angOP} holds.
\end{enumerate}
\end{theorem}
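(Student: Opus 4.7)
The plan is to recast the boundary value problem \eqref{eq:Llambda} as a fixed-point problem for a compact self-adjoint operator on $L^2(\Sigma)$ and then invoke the standard Fredholm alternative. First I would use Lemma~\ref{lem:HARDY} to define the solution operator $T: L^2(\Sigma)\to L^2(\Sigma)$ by $Tf=u$, where $u\in H^1_0(\Sigma)$ is the unique weak solution of $-Lu=f$. The estimate in Lemma~\ref{lem:HARDY} gives $\|Tf\|_{H^1_0(\Sigma)}\le C\|f\|_{L^2(\Sigma)}$, and composing with the Rellich embedding $H^1_0(\Sigma)\hookrightarrow L^2(\Sigma)$ makes $T$ compact on $L^2(\Sigma)$. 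Self-adjointness follows from the integration-by-parts identity
\[
(Tf,g)_{L^2(\Sigma)} = \int_\Sigma \Bigl(\nabla_\theta(Tf)\cdot\nabla_\theta(Tg) + \tfrac{\kappa}{\rho^2}(Tf)(Tg)\Bigr)\,d\theta = (f,Tg)_{L^2(\Sigma)},
\]
where the finiteness of the weighted integral is precisely the content of Hardy's inequality used in Lemma~\ref{lem:HARDY}. The nonzero spectrum of $T$ is then $\{1/\lambda_i\}_{i\ge 1}$ with eigenfunctions $\phi_i$ from Theorem~\ref{thm:eigenvalues}.

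Next, I would rewrite \eqref{eq:Llambda} as $-Lu=\lambda u-f$, or equivalently $(I-\lambda T)u=-Tf$. For part (i), if $\lambda=0$ the solution $u=-Tf$ is provided directly by Lemma~\ref{lem:HARDY}; otherwise $1/\lambda\notin\sigma(T)=\{0\}\cup\{1/\lambda_i\}$, so $I-\lambda T$ has a bounded inverse on $L^2(\Sigma)$ by the spectral theorem for compact self-adjoint operators. This yields a unique $u\in L^2(\Sigma)$ with $\|u\|_{L^2}\le C\|f\|_{L^2}$. To upgrade to \eqref{eq:L2angOP}, I would re-apply Lemma~\ref{lem:HARDY} to $-Lu=\lambda u-f$, whose right-hand side is now controlled in $L^2$:
\[
\|u\|_{H^1_0(\Sigma)} \le C\|\lambda u-f\|_{L^2(\Sigma)} \le C\bigl(|\lambda|\|u\|_{L^2(\Sigma)}+\|f\|_{L^2(\Sigma)}\bigr)\le C'\|f\|_{L^2(\Sigma)}.
\]

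For part (ii), I would apply the Fredholm alternative for the compact self-adjoint operator $\lambda_i T$: the equation $(I-\lambda_i T)u=-Tf$ is solvable if and only if $-Tf\perp\ker(I-\lambda_i T)$ in $L^2(\Sigma)$, and $\ker(I-\lambda_i T)$ is exactly the $\lambda_i$-eigenspace of $L$. Self-adjointness of $T$ converts this to the hypothesized orthogonality of $f$, since $(Tf,\phi_k)=\lambda_i^{-1}(f,\phi_k)$ for $\phi_k$ in this eigenspace. Uniqueness of the solution orthogonal to the eigenspace and the $L^2$ bound $\|u\|_{L^2}\le C\|f\|_{L^2}$ come from boundedness of the inverse of $I-\lambda_i T$ restricted to the orthogonal complement, and the $H^1_0$ bound follows as in part (i). The argument is essentially routine linear functional analysis once Lemma~\ref{lem:HARDY} is in hand; the main technical input, already absorbed into that lemma, is the Hardy-type estimate that tames the singular zeroth-order coefficient $\kappa/\rho^2$ and makes $T$ both well defined and compact.
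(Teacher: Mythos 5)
Your proposal is correct, and it supplies exactly the standard functional-analytic argument that the paper itself omits: the paper states Theorem~\ref{thm:Fredholm} without proof, as a routine consequence of Lemma~\ref{lem:HARDY}, the compact embedding $H^1_0(\Sigma)\hookrightarrow L^2(\Sigma)$, and the spectral theory of the resulting compact self-adjoint solution operator. Your handling of the singular term via the Hardy estimate (to justify self-adjointness and the form domain) and the bootstrap back to the $H^1_0$ bound are both sound.
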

In the rest of this paper, we fix \begin{equation}
\kappa=\frac{1}{4}n(n+2),
\end{equation}
and set
\begin{equation}
\beta = \frac{1}{2}(n-2).
\end{equation}
Owing to \cite{HJS2024}, we have the following decay estimate near $\partial\Sigma$.
\begin{theorem}\label{thm:weightedEV}
Let $\Sigma \subsetneq \mathbb{S}^{n-1}$ be a domain with $\partial\Sigma\in C^{1,\alpha}$ for some $\alpha\in(0,1)$, and
$\rho \in C^\infty(\Sigma)\cap \mathrm{Lip}(\Sigma)$ be the positive solution of \eqref{eq:rho}-\eqref{eq:rhoB}.
Assume $s>0$ satisfies
\begin{equation}
  s(s-1) = \kappa,
\end{equation}
and that, for some $\lambda \in \mathbb{R}$ and $f \in C^\infty(\Sigma)$, the function $u \in C^\infty(\Sigma)$ is a bounded solution of \eqref{eq:Llambda}.
If, for some $A>0$ and $a>0$ with $a \neq s$,
\begin{equation}
  |f| \,\le\, A\,\rho^{\,a-2}\quad \text{in }\Sigma,
\end{equation}
then, with $b := \min\{a,s\}$,
\begin{equation}\label{eq:414}
  |u| + \rho\,|\nabla_\theta u| \,\le\, C\bigl(\,\|u\|_{L^2(\Sigma)} + A\,\bigr)\,\rho^{\,b}
  \quad \text{in }\Sigma,
\end{equation}
where $C$ is a positive constant depending only on $n,\alpha,c_0,\kappa,a,\lambda$, and $\Sigma$.

In particular, let $\{\phi_i\}$ be the eigenfunctions from Theorem \ref{thm:eigenvalues} with eigenvalue $\lambda_i$.
Then, $\phi_i \in \mathrm{Lip}(\Sigma)$ and
\begin{equation}
  |\phi_i| + \rho\,|\nabla_\theta \phi_i| \,\le\, C_i\,\rho^{\,s}\quad \text{in }\Sigma,
\end{equation}
where $C_i$ is a positive constant depending only on $n,\kappa,\alpha,c_0,\lambda_i$, and $\Sigma$.
\end{theorem}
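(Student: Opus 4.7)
The plan is to establish the decay by a localized barrier argument near $\partial\Sigma$. On $\Omega_{\epsilon_0}:=\{\rho<\epsilon_0\}$ with $\epsilon_0$ chosen so that $\kappa/\rho^2>2\lambda$, the zeroth-order coefficient of $L+\lambda$ is negative and a classical weak maximum principle holds. Away from $\partial\Sigma$, interior elliptic regularity applied to $(L+\lambda)u=f$ yields $\|u\|_{L^\infty(\{\rho=\epsilon_0\})}\le C(\|u\|_{L^2(\Sigma)}+A)$, supplying the boundary data needed to close the comparison argument on $\partial\Omega_{\epsilon_0}$. The proof thus reduces to producing, for the appropriate exponent $b$, a positive supersolution $w\sim C(\|u\|_{L^2}+A)\rho^b$ dominating $|u|$ on the inner boundary and $|f|$ throughout $\Omega_{\epsilon_0}$.

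The key ingredient is the action of $L+\lambda$ on $\rho^\gamma$. Differentiating, substituting $\rho\Delta_\theta\rho=\tfrac{n}{2}(|\nabla_\theta\rho|^2-1)-S\rho^2$ from \eqref{eq:rho}, and using $\rho\in C^{1,\alpha}(\overline{\Sigma})$ together with Lemma~\ref{lem:rhoGRAD} (which gives $|\nabla_\theta\rho|^2-1=O(\rho^\alpha)$), one obtains
\begin{equation*}
L(\rho^\gamma)+\lambda\rho^\gamma=[\gamma(\gamma-1)-\kappa]\rho^{\gamma-2}+O(\rho^{\gamma-2+\alpha})+O(\rho^\gamma)\quad\text{in }\Omega_{\epsilon_0}.
\end{equation*}
The indicial polynomial $\gamma(\gamma-1)-\kappa$, whose positive root is precisely $s$, therefore dictates the leading sign of any such test function.

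In the subcritical case $a<s$ (so $b=a$), one has $a(a-1)-\kappa<0$, and the one-term barrier $w=C(\|u\|_{L^2}+A)\rho^a$ works directly: for $C$ large and $\epsilon_0$ small, $(L+\lambda)w\le-|f|$ in $\Omega_{\epsilon_0}$ and $w\ge|u|$ on $\{\rho=\epsilon_0\}$, so $|u|\le w$ by comparison. The gradient estimate $\rho|\nabla_\theta u|\le C\rho^a$ then follows from rescaled interior Schauder estimates on balls of radius comparable to $\rho$.

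The supercritical case $a>s$ (so $b=s$) is the main obstacle, because $s(s-1)-\kappa=0$ makes $\rho^s$ only an approximate barrier, with $(L+\lambda)\rho^s=O(\rho^{s-2+\alpha})$ of indeterminate sign. I would resolve this with a two-term barrier $w=M\rho^s-N\rho^{a'}$, where $a'=\min\{a,\,s+\alpha/2\}\in(s,s+\alpha)$: since $a'>s$ gives $a'(a'-1)-\kappa>0$, the term $-N(L+\lambda)\rho^{a'}$ contributes a strictly negative leading part $-N[a'(a'-1)-\kappa]\rho^{a'-2}$, restoring the missing negativity. Choosing $M\sim(\|u\|_{L^2}+A)/\epsilon_0^s$ to enforce $w\ge|u|$ on $\{\rho=\epsilon_0\}$, then taking $\epsilon_0$ sufficiently small and $N$ proportional to $M\epsilon_0^{s+\alpha-a'}/[a'(a'-1)-\kappa]$ plus a contribution from $A$, one achieves $(L+\lambda)w\le-|f|$ while keeping $w>0$ (since $M/N\gtrsim\epsilon_0^{a'-s}$). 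Comparison yields $|u|\le M\rho^s\le C(\|u\|_{L^2}+A)\rho^s$. The hard part is precisely this simultaneous balancing of $M$, $N$, and $\epsilon_0$ against the $C^{1,\alpha}$ error $O(\rho^\alpha)$ at the vanishing indicial root. The eigenfunction decay $|\phi_i|+\rho|\nabla_\theta\phi_i|\le C_i\rho^s$ is then an immediate corollary, obtained by applying the main estimate with $f\equiv 0$, any $a>s$, and the $L^2$-normalization of $\phi_i$.
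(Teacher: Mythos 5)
The paper does not prove this theorem; it is imported verbatim from \cite{HJS2024}, so your proposal can only be judged on its own merits. Your overall strategy --- maximum principle on a collar $\Omega_{\epsilon_0}=\{\rho<\epsilon_0\}$ where $-\kappa/\rho^2+\lambda<0$, the indicial computation $L(\rho^\gamma)+\lambda\rho^\gamma=[\gamma(\gamma-1)-\kappa]\rho^{\gamma-2}+O(\rho^{\gamma-2+\alpha})+O(\rho^\gamma)$ driven by \eqref{eq:rho} and Lemma~\ref{lem:rhoGRAD}, a one-term barrier $\rho^a$ when $a<s$, a two-term barrier $M\rho^s-N\rho^{a'}$ with $a'\in(s,s+\alpha)$ at the critical root, and scaled Schauder estimates for the gradient --- is the standard and correct way to prove such weighted decay estimates, and your balancing of $M$, $N$, $\epsilon_0$ against the $O(\rho^\alpha)$ error does close.

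There is, however, one genuine gap: your comparison argument cannot be closed on the portion $\partial\Sigma$ of $\partial\Omega_{\epsilon_0}$. The hypothesis gives only that $u$ is \emph{bounded} in $\Sigma$ (it need not even extend continuously to $\overline\Sigma$), while every barrier you construct vanishes as $\rho\to0$. So ``$w\ge|u|$ on $\partial\Omega_{\epsilon_0}$'' fails on exactly the piece of the boundary that matters, and the weak maximum principle as you invoke it does not apply. The standard repair is to exploit the strongly negative zeroth-order term once more: add to the barrier a term $\epsilon\rho^{-\delta}$ with $\delta\in(0,s-1)$, so that $(-\delta)(-\delta-1)-\kappa=\delta(\delta+1)-\kappa<0$ makes $\epsilon\rho^{-\delta}$ itself a supersolution on the collar, while $\epsilon\rho^{-\delta}\to+\infty$ at $\partial\Sigma$ dominates the bounded $u$ there. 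Then $w_\epsilon=w+\epsilon\rho^{-\delta}$ satisfies the comparison hypotheses on all of $\partial\Omega_{\epsilon_0}$ (any negative interior minimum of $w_\epsilon-u$ is excluded by the sign of the zeroth-order coefficient), and letting $\epsilon\to0$ recovers $|u|\le w$. With this insertion your argument is complete; without it, the central step is unjustified. A minor additional remark: in the gradient step you should note that on a ball $B_{\rho(\theta_0)/2}(\theta_0)$ the coefficient $\kappa/\rho^2$ is comparable to a constant with controlled H\"older norm after rescaling (since $\rho$ is Lipschitz and bounded above and below on such a ball), which is what makes the rescaled Schauder estimate uniform.
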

We note the choice of notation $C_i$ in the above theorem to emphasize that $C_i$ may not be bounded as $i\rightarrow\infty$. 
It is crucial to point out that with our choice of $\kappa=\frac{1}{4}n(n+2)$, we have 
\begin{equation}\label{eq:s}
s=\frac{1}{2}(n+2).
\end{equation}
In particular, since $n\geq 3$, we have $s>2$.

By Theorem \ref{thm:eigenvalues}, let $\{\lambda_i\}_{i\geq 1}$ be an increasing sequence of positive eigenvalues diverging to $\infty$, and $\{\phi_i\}_{i\geq1}$ an $L^2(\Sigma)$-orthonormal basis in $H^1_0(\Sigma)\cap C^\infty(\Sigma)$  satisfying $L\phi_i=-\lambda_i\phi_i$. Rewriting \eqref{eq:ELLv} in terms of \eqref{eq:ANGOP}, we have
\begin{equation}\label{eq:mathcalL}\mathcal{L}v=\partial_{tt}v+Lv-\beta^2v.
\end{equation}
For each fixed $i\geq 1$ and for some arbitrary function $\psi=\psi(t)\in C^2(\mathbb{R})$, we compute:
\begin{equation}
    \mathcal{L}(\psi\phi_i)=(L_i\psi)\phi_i,
\end{equation}
where the operator $L_i$ is defined by
\begin{equation}\label{eq:defLi}
    L_i\psi=\psi_{tt}-(\lambda_i+\beta^2)\psi.
\end{equation}
The kernel of $L_i$, denoted Ker($L_i$), is spanned by  $e^{-\gamma_it}$ and $e^{\gamma_it}$, with
\begin{equation}\label{eq:gammai}
    \gamma_i=\sqrt{\lambda_i+\beta^2}. 
\end{equation} We point out that $\{\gamma_i\}_{i\geq 1}$ is increasing and diverges to $\infty$.
For future reference, we define the associated  \textit{index set} $\mathcal{I}$ by 
\begin{equation}\label{eq:INDEX}
    \mathcal{I}=\{\sum_{i\geq1}m_i\gamma_i; m_i\in\mathbb{Z}_+ \,\text{with finitely many } m_i>0 \}.
\end{equation}
Equivalently, $\mathcal{I}$ consists of finite linear combinations of $\gamma_1,\gamma_2,\cdots$ with positive integer coefficients. Note that some $\gamma_i$ may be written as a linear combination of $\gamma_1,\cdots,\gamma_{i-1}$ with positive integer coefficients whose sum is at least two.

As a final preparation, we define a special choice of boundary flattening coordinates. Writing $(y',y_{n-1})$ with $y'=(y_1,\cdots,y_{n-2})$ for the coordinates of $\mathbb{R}^{n-1}$, since $\Sigma$ is a bounded $C^{2,\alpha}$ domain, the compactness of $\partial\Sigma$ guarantees the existence of $r>0$ such that for any $\theta_0\in\partial\Sigma$, we may choose a $C^{2,\alpha}$ diffeomorphism $\Psi$ satisfying
\begin{equation}\label{eq:boundaryflat}
\begin{split}
\Psi(\Sigma\cap B_{r}(\theta_0)) &\subset \{(y',y_{n-1}) : y_{n-1}>0\}, \\
\Psi(\partial\Sigma\cap B_{r}(\theta_0)) &\subset \{(y',y_{n-1}) : y_{n-1}=0\}.
\end{split}
\end{equation}
Moreover, we may arrange that $y_{n-1}=\rho$ is the last coordinate. In these coordinates, for a sufficiently small $\delta_0>0$, the image always contains a rectangle of the form
\begin{equation*}
    G_{\delta_0}=\{(y',y_{n-1})\in\mathbb{R}^{n-1} \colon|y'|<\delta_0, \quad 0<y_{n-1}<\delta_0\}.
\end{equation*}
Each point in the collar neighborhood $\Sigma_{\delta_0}\coloneq\{\theta\in\Sigma : \rho(\theta)<\delta_0\}$ is represented by exactly one $(y',y_{n-1})$ in one of finitely many such charts; for simplicity, we write $G_{\delta_0}$ to denote the image of $\Sigma_{\delta_0}$ under the boundary flattening diffeomorphism $\Psi$. Note that we still write $\Sigma_{\delta_0}$ to reference the collar neighborhood in the original geometry.
For this fixed $\delta_0>0$, we work in one of finitely many boundary coordinate charts $G_{\delta_0}$. 

We define for each fixed $i\geq 1$ and $(T,Y',N)\in[t_0,\infty)\times G_{{\delta_0}}$, the \textit{degenerate cylinders} $Q_i=Q_i(T,Y',N)$ as
\begin{equation}\label{eq:Qi}
Q_i(T,Y',N)=\begin{cases}
\big[t_0,t_0+\frac{N}{2^i}\big)\times B_{\frac{N}{2^i}}(Y')\times(N-\frac{N}{2^i},N+\frac{N}{2^i}) & \text{for } t_0\leq T< t_0+\frac{N}{2^i}, \\[1mm]

\big(T-\frac{N}{2^i},T+\frac{N}{2^i}\big)\times B_{\frac{N}{2^i}}(Y')\times (N-\frac{N}{2^i},N+\frac{N}{2^i}) & \text{for } T\geq t_0+\frac{N}{2^i}.
\end{cases}
\end{equation}
We note that, without loss of generality, we can assume the above definition makes sense by replacing $\delta_0$ with $\delta_0/2$ to ensure each cylinder lies within a boundary chart. For convenience, write
\begin{equation}
G^\delta_{\delta_0}=\{(y',y_{n-1})\in\mathbb{R}^{n-1} \colon|y'|<\delta_0, \quad 0<y_{n-1}<\min\{\delta_0,\delta\}\}.
\end{equation}
For any $\delta>0$ such that $0<\delta<\delta_0$, we denote by $\mathcal{Q}_{i,\delta}$ the collection of degenerate cylinders at scale $i\geq 1$ with maximum center height $\delta>0$, defined by
\begin{equation}
    \mathcal{Q}_{i,\delta}=\{Q_i(T,Y',N): (T,Y',N)\in [t_0,\infty)\times G^\delta_{\delta_0} \}.
\end{equation}
\par\indent We now introduce weighted Hölder spaces on $[t_0,\infty)\times\Sigma$. Fix a $t_0>0$. For nonnegative integers $i$ and $k$, $\alpha\in(0,1)$, $\mu, s\in\mathbb{R}$, and $0<\delta<\delta_0$, set 
\begin{align*}
    {\lVert w\rVert}_{\Lambda_\mu^{k}([t_0,\infty)\times\Sigma\setminus \Sigma_\delta)}&=\sum\limits_{j=0}^k \sup\limits_{(t,\theta)\in[t_0,\infty)\times(\Sigma\setminus \Sigma_\delta)} e^{\mu t}|\nabla^jw(t,\theta)|,
    \\
    \\
    {\lVert w\rVert}_{\Lambda_\mu^{k,\alpha}([t_0,\infty)\times\Sigma\setminus \Sigma_\delta)}&={\lVert w\rVert}_{\Lambda_\mu^{k}([t_0,\infty)\times\Sigma\setminus \Sigma_\delta)}+\sup\limits_{t\geq t_0+1 } e^{\mu t}[\nabla^kw]_{C^\alpha\big([t-1,t+1]\times (\Sigma\setminus \Sigma_\delta)\big)},
\end{align*}
and 
\begin{align*}
    {\lVert w\rVert}_{\Lambda_{\mu,s}^{k}([t_0,\infty)\times\Sigma_\delta)}&=\sum\limits_{j=0}^k \sup\limits_{(T,Y',N)\in[t_0,\infty)\times G^\delta_{\delta_0}} e^{\mu T}N^{-s+j}|\nabla^jw(T,Y',N)|,
    \\
    \\
    {\lVert w\rVert}_{\Lambda_{\mu,s}^{k,\alpha}([t_0,\infty)\times\Sigma_\delta)}&={\lVert w\rVert}_{\Lambda_{\mu,s}^{k}([t_0,\infty)\times\Sigma_\delta)}
    \\&\quadd+\sup\limits_{(T,Y',N)\in[t_0,\infty)\times G^\delta_{\delta_0}}e^{\mu T}N^{-s+k+\alpha}[\nabla^k w]_{C^\alpha\big(Q_i(T,Y',N)\big)},
\end{align*}
where $[\,\cdot\,]_{C^\alpha}$ is the usual Hölder semi-norm. The supremum in the final term is taken over all points $(T,Y',N)\in G^\delta_{\delta_0}$. These norms are adapted to the degeneracy near $\partial\Sigma$, and the choice of cylinders $Q_i$ ensures compatibility with local estimates. Moreover, define, for $0<\delta'<\delta''<\delta$,
\begin{equation*}
\|w\|_{\Lambda_{\mu,s}^{k,\alpha}([t_0,\infty)\times\Sigma)} 
= \|w\|_{\Lambda_\mu^{k,\alpha}([t_0,\infty)\times (\Sigma\setminus \Sigma_{\delta'}))} 
+ \|w\|_{\Lambda_{\mu,s}^{k,\alpha}([t_0,\infty)\times \Sigma_{\delta''})}.
\end{equation*}
\begin{defn}
We define the weighted Hölder space $\Lambda_{\mu,s}^{k,\alpha}([t_0,\infty)\times\Sigma)$ 
to be the set of functions $w\text{ in } C^{k,\alpha}_{\mathrm{loc}}([t_0,\infty)\times\Sigma)$
with a finite $\|w\|_{\Lambda_{\mu,s}^{k,\alpha}([t_0,\infty)\times\Sigma)}$.
\end{defn}
We note that the choice of specific $i\geq 1$ and appropriate $\delta',\delta''>0$ in the above definitions result in equivalent norms.
Similarly, we can define corresponding weighted Hölder spaces on $\Sigma$ for function $\varphi=\varphi(\theta)$ that are independent of $t$. 
In particular, let
\begin{equation*}
    Q_i(Y',N)=B_{\frac{N}{2^i}}(Y')\times(N-\frac{N}{2^i},N+\frac{N}{2^i}),
\end{equation*}
and define
\begin{align*}
    {\lVert \varphi\rVert}_{\Lambda_{s}^{k}(\Sigma_\delta)}&=\sum\limits_{j=0}^k \sup\limits_{(Y',N)\in G^\delta_{\delta_0}} N^{-s+j}|\nabla^j \varphi(T,Y',N)|,
    \\
    {\lVert \varphi\rVert}_{\Lambda_{s}^{k,\alpha}(\Sigma_\delta)}&={\lVert \varphi\rVert}_{\Lambda_{s}^{k}(\Sigma_\delta)}+\sup\limits_{(Y',N)\in G^\delta_{\delta_0}}N^{-s+k+\alpha}[\nabla^k \varphi]_{C^\alpha\big(Q_i(Y',N)\big)}.
\end{align*}
Similarly, define, for $0<\delta'<\delta''<\delta$, 
\begin{equation*}
{\lVert \varphi\rVert}_{\Lambda_{s}^{k,\alpha}(\Sigma)}={\lVert \varphi\rVert}_{C^{k,\alpha}(\Sigma\setminus\Sigma_{\delta'})}+{\lVert \varphi\rVert}_{\Lambda_{s}^{k,\alpha}(\Sigma_{\delta''})}.
\end{equation*}
We define the corresponding weighted Hölder space $\Lambda_{s}^{k,\alpha}(\Sigma)$ as the set of all functions $\varphi\in C^{k,\alpha}_{\text{loc}}(\Sigma)$ with a finite ${\lVert \varphi\rVert}_{\Lambda_{s}^{k,\alpha}(\Sigma)}$.

Recall the linear operator $\mathcal{L}$ defined by \eqref{eq:ELLv}, $\mu>0$ and $s>2$. For some $f\in \Lambda_{\mu,s-2}^{0,\alpha}([t_0,\infty)\times\Sigma)$, consider the linear equation 
\begin{equation}\label{eq:OGLINEAREQ}
    \mathcal{L}v=f \quad\text{in } (t_0,\infty)\times\Sigma. 
\end{equation}
To ensure uniqueness of solutions, we impose an appropriate boundary condition at $t=t_0$. In this way, the operator
\begin{equation*}    \mathcal{L}:\Lambda_{\mu,s}^{2,\alpha}([t_0,\infty)\times\Sigma)\rightarrow \Lambda_{\mu,s-2}^{0,\alpha}([t_0,\infty)\times\Sigma)
\end{equation*}
admits a bounded inverse.
To this end, consider the Dirichlet boundary-value problem
\begin{equation}\label{eq:OGDIRICH}
\begin{aligned}
   \mathcal{L}v &= f \quad \text{in } (t_0,\infty)\times\Sigma, \\
   v &= \varphi \quad \text{on } \{t_0\}\times\Sigma.
\end{aligned}
\end{equation}

For later purposes, it is convenient to introduce a transformed version of \eqref{eq:OGDIRICH}. Since the zeroth order coefficient of $\mathcal{L}v$ is singular in $\rho^2$, we set 
\begin{equation*}
v=\rho^2w
\end{equation*}
and compute
\begin{equation}
\mathcal{L}(\rho^2 w)
=\rho^2(w_{tt}+\Delta_\theta w)+4\rho\nabla_\theta\rho\cdot \nabla_\theta w +[-\frac{1}{4}(n-2)^2\rho^2-\frac{1}{4}n(n+2)+2\rho\Delta_\theta \rho +2|\nabla_\theta\rho|^2]w.
\end{equation}
For convenience, define
\begin{equation}\label{eq:deffrakL}
    \mathfrak{L}w=\rho^2w_{tt}+\rho^2 a_{ij}\partial_{ij}w+\rho b_i \partial_i w +cw,
\end{equation}
where 
\begin{equation*}
    \begin{cases}
    a_{ij}=\delta_{ij}, \\[1mm]
    b_i=4\partial_i \rho, \\[1mm]
    c=-\frac{1}{4}(n-2)^2\rho^2-\frac{1}{4}n(n+2)+2\rho\Delta_\theta \rho +2|\nabla_\theta\rho|^2.
    \end{cases}
\end{equation*}
Given boundary data $\varphi$ on $\{t_0\}\times\Sigma$ as in \eqref{eq:OGDIRICH}, we define the corresponding transformed boundary data
\begin{equation}
\psi =\frac{\varphi}{\rho^2}\quad\text{on }\{t_0\}\times\Sigma.
\end{equation}
The transformed Dirichlet problem for $w$ reads
\begin{equation}
\begin{aligned}\label{eq:TRANSDIRICH}
   \mathfrak{L}w &= f \quad \text{in } (t_0,\infty)\times\Sigma, \\
   w &= \psi \quad \text{on } \{t_0\}\times\Sigma,
\end{aligned}
\end{equation}
where $f$ is the same right-hand side as before in \eqref{eq:OGDIRICH}.
Note that the linear operator in \eqref{eq:deffrakL} is \emph{uniformly degenerate} on $(t_0,\infty)\times\partial\Sigma$, and by Lemma \ref{lem:rhoGRAD}, $c<0$ on $\partial\Sigma$ independent of $t$. Nonetheless, uniqueness of solutions holds. Recall that $\{\phi_i\}_{i\geq 1}$ is a fixed sequence of eigenfunctions on $\Sigma$, which form an orthonormal basis in $L^2(\Sigma)$, and that $\{\gamma_i\}_{i\geq1}$ is the sequence given by \eqref{eq:gammai}.

\begin{lemma}\label{lem:Uniqueness} Let $ \mu>\gamma_1$, and $\varphi \in C(\Sigma)$. Then, the boundary value problem \eqref{eq:OGDIRICH} admits at most one solution $v\in C^2_{\mu}([t_0,\infty)\times\Sigma)$.
\end{lemma}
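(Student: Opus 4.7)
The strategy is to project $v$ onto the eigenbasis $\{\phi_i\}_{i\geq 1}$ of $L$ from Theorem \ref{thm:eigenvalues}, reducing the PDE $\mathcal{L}v=0$ to a family of decoupled second-order ODEs in $t$ whose solutions can be eliminated using decay together with the initial condition. Since the maximum principle is unavailable (because of the singular zeroth-order term $-\kappa/\rho^2$ in $L$), the spectral approach is the natural substitute.

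By linearity, I first reduce to the homogeneous problem: if $v_1, v_2 \in C^2_\mu([t_0,\infty)\times\Sigma)$ both solve \eqref{eq:OGDIRICH}, then $v := v_1 - v_2 \in C^2_\mu$ satisfies $\mathcal{L}v = 0$ in $(t_0,\infty)\times\Sigma$ with $v|_{t=t_0}=0$, and it suffices to show $v \equiv 0$. For each $t \geq t_0$, define the Fourier coefficients $v_i(t) := \int_\Sigma v(t,\theta)\phi_i(\theta)\,d\theta$, so that $v(t,\cdot) = \sum_{i\geq 1}v_i(t)\phi_i$ in $L^2(\Sigma)$. Using \eqref{eq:mathcalL} and $L\phi_i = -\lambda_i \phi_i$, testing $\mathcal{L}v=0$ against $\phi_i$ yields the decoupled system
\begin{equation*}
   v_i''(t) - \gamma_i^2\, v_i(t) = 0, \qquad t > t_0, \qquad v_i(t_0) = 0,
\end{equation*}
with $\gamma_i = \sqrt{\lambda_i + \beta^2}$ as in \eqref{eq:gammai}. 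The general solution $v_i(t) = a_i e^{-\gamma_i t} + b_i e^{\gamma_i t}$, combined with the bound $|v_i(t)|\leq \|v(t,\cdot)\|_{L^2(\Sigma)}\leq C e^{-\mu t}$ inherent in $v\in C^2_\mu$, forces $b_i=0$ on letting $t\to\infty$, and then $a_i=0$ from $v_i(t_0)=0$. Thus every Fourier coefficient vanishes and $v \equiv 0$ by completeness.

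The hard part is justifying the decoupled ODE rigorously, since the angular operator $L = \Delta_\theta - \kappa/\rho^2$ has a singular zeroth-order coefficient and the eigenfunctions decay only like $\rho^s$ at $\partial\Sigma$; as $v$ is assumed merely $C^2_\mu$ with no a priori boundary behavior on $\partial \Sigma$, integrating by parts against $\phi_i$ produces nontrivial boundary terms that must be shown to vanish. I propose to extract the boundary decay of $v$ from the equation itself: rewriting $\mathcal{L}v=0$ as $L v(t,\cdot) = -v_{tt}(t,\cdot) + \beta^2 v(t,\cdot)$, with the right-hand side bounded in $\theta$ uniformly, Theorem \ref{thm:weightedEV} with $\lambda=0$ and $a=2$ gives $|v(t,\theta)| + \rho(\theta)|\nabla_\theta v(t,\theta)| \leq C\rho(\theta)^2$ near $\partial\Sigma$ (noting $s=(n+2)/2 > 2$, so $a \neq s$). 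The singular $\kappa/\rho^2$ terms cancel in the computation $\int (Lv)\phi_i - \int v(L\phi_i)$, leaving only the Green's identity boundary integrals on $\{\rho=\epsilon\}$, which are of order $O(\epsilon^{s+1}) \to 0$ as $\epsilon \to 0$, using $\phi_i = O(\rho^s)$, $|\nabla\phi_i| = O(\rho^{s-1})$, $v = O(\rho^2)$, and $|\nabla v| = O(\rho)$. Hence $\int_\Sigma (Lv)\phi_i\,d\theta = -\lambda_i v_i(t)$, which together with $\int_\Sigma v_{tt}\phi_i\,d\theta = v_i''(t)$ (justified by boundedness of $v_{tt}$ and finiteness of $|\Sigma|$) yields the claimed ODE and completes the argument.
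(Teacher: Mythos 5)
Your proof is correct and follows essentially the same route as the paper: project onto the eigenbasis $\{\phi_i\}$, reduce to the ODEs $L_i v_i=0$ with $v_i(t_0)=0$, and eliminate both exponential modes using the decay $e^{\mu t}|v_i|\le C$ together with the initial condition. In fact you go further than the paper by justifying the integration by parts against $\phi_i$ (via the $\rho^2$ boundary decay from Theorem \ref{thm:weightedEV}), a step the paper's proof leaves implicit.
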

\begin{proof}
Suppose $f=0$ and $\varphi=0$, and let $v\in C^2_{\mu}([t_0,\infty)\times\Sigma)$ solve the homogeneous version of \eqref{eq:OGDIRICH}. For each $i\geq 1$,  define the projection
\begin{equation*}
    v_i(t)=\int_{\Sigma} v(t,\theta)\phi_i(\theta)\,d\theta.
\end{equation*}
Then, $v_i(t)$ satisfies the ODE $L_iv_i=0$ on $(t_0,\infty)$ with initial condition $v_i(t_0)=0$.
As $\text{Ker}(L_i)=\operatorname{span}\{e^{-\gamma_i t},e^{\gamma_i t}\}$, we have 
\begin{equation*}
    v_i(t)=c_{i,1}e^{-\gamma_it}+c_{i,2}e^{\gamma_it},
\end{equation*}
for some constants $c_{i,1},c_{i,2}\in \mathbb{R}$. Since $v\in C^2_{\mu}([t_0,\infty)\times\Sigma)$, we have $|e^{\mu t}v_i(t)|\leq C$ as $t\to\infty$, which forces $v_i=0$ when $\gamma_i<\mu$ and $c_{i,2}=0$ when $\gamma_i\geq\mu$. In either case, $v_i(t_0)=0$ yields $c_{i,1}=0$, and hence $v_i=0$ for all $i\geq 1$, and therefore $v=0$.
\end{proof}

Next, we establish a  $C^{2,\alpha}$-estimate of solutions of \eqref{eq:OGDIRICH}.

\begin{lemma}\label{lem:Apriori}
\textit{Let $\alpha\in(0,1),\, \mu>0,\, s>2,\, f\in \Lambda_{\mu,s-2}^{0,\alpha}([t_0,\infty)\times\Sigma),  \text{ and } \varphi \in \Lambda_{s}^{2,\alpha}(\Sigma)$. Suppose $v\in \Lambda_{s}^{2,\alpha}([t_0,\infty)\times\Sigma)$ is a solution of \eqref{eq:OGDIRICH}. Then},
\begin{equation}\label{eq:Apriori}
    {\lVert v\rVert}_{\Lambda_{\mu,s}^{2,\alpha}([t_0,\infty)\times\Sigma)} 
    \leq C\{{\lVert v\rVert}_{\Lambda_{\mu,s}^{0}([t_0,\infty)\times\Sigma)}+{\lVert f\rVert}_{\Lambda_{\mu,s-2}^{0,\alpha}([t_0,\infty)\times\Sigma)}
    +e^{\mu t_0}{\lVert \varphi\rVert}_{\Lambda_{s}^{2,\alpha}(\Sigma)}\},
\end{equation}
\textit{where $C$ is a positive constant depending only on $n,\alpha,\mu$,  and the geometry of  $\Sigma$, independent of $t_0$.}
\end{lemma}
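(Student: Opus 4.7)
The plan is to deduce \eqref{eq:Apriori} from the classical $C^{2,\alpha}$ Schauder theory by localizing to cylinders of a size on which $\mathcal{L}$ is effectively uniformly elliptic. The weighted norm $\Lambda^{2,\alpha}_{\mu,s}$ is built exactly to match this scale: a unit window in $t$, on which $e^{\mu t}$ varies by a bounded factor, and a window of size comparable to $\rho(\theta)$ in $\theta$. I will treat three regions separately and patch.

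\emph{Interior region.} On $\Sigma\setminus\Sigma_{\delta'}$ the coefficient $\kappa/\rho^2$ is bounded, so $\mathcal{L}$ is uniformly elliptic with smooth coefficients. For any $t\geq t_0+1$ I would apply classical interior Schauder on $[t-1,t+1]\times(\Sigma\setminus\Sigma_{\delta'/2})$ to get the usual $C^{2,\alpha}$ bound in terms of $\|v\|_{C^0}$ and $\|f\|_{C^\alpha}$; multiplying by $e^{\mu t}$ and taking the supremum produces the $\Lambda^{2,\alpha}_\mu$ contribution on the LHS of \eqref{eq:Apriori}.

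\emph{Boundary collar.} In a boundary-flattening chart with $y_{n-1}=\rho$, fix a cylinder $Q_i(T,Y',N)\in\mathcal{Q}_{i,\delta''}$ and rescale
\begin{equation*}
\tilde t=(t-T)/N,\quad \tilde y'=(y'-Y')/N,\quad \tilde y_{n-1}=y_{n-1}/N,\quad \tilde v(\tilde t,\tilde y):=N^{-s}v(t,y).
\end{equation*}
The cylinder $Q_i$ maps to a unit-scale cylinder in which $\tilde y_{n-1}\in(1-2^{-i},1+2^{-i})$ is bounded away from $0$, and $\mathcal{L}v=f$ transforms to
\begin{equation*}
\partial_{\tilde t\tilde t}\tilde v+g^{ij}\partial_{\tilde y_i\tilde y_j}\tilde v-\frac{\kappa}{\tilde y_{n-1}^2}\tilde v-\beta^2N^2\tilde v+N\,E\tilde v=N^{-(s-2)}f,
\end{equation*}
where $E$ collects the first-order correction from writing $\Delta_\theta$ in flat coordinates. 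The resulting rescaled operator is uniformly elliptic on a unit cylinder with $C^\alpha$-coefficients bounded independently of $N$, and the right-hand side satisfies $|N^{-(s-2)}f|\leq e^{-\mu T}\|f\|_{\Lambda^{0,\alpha}_{\mu,s-2}}$. I would then apply classical interior Schauder to $\tilde v$ on a smaller concentric cylinder and unroll the rescaling; the weight $N^{-s+j}$ in the $\Lambda^{k,\alpha}_{\mu,s}$ norm is exactly the factor produced when converting $\partial^j_{\tilde y}\tilde v$ back to $\partial^j_y v$, and the factor $e^{\mu T}$ is absorbed by $e^{\mu t}$ since they differ by a constant on $Q_i$.

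\emph{Initial slice and patching.} For cylinders $Q_i(t_0,Y',N)$ touching $\{t_0\}\times\Sigma$, I would apply the same rescaling together with $\tilde\varphi(\tilde y):=N^{-s}\varphi(y)$, and replace interior Schauder by boundary Schauder with Dirichlet data. The data contributes $e^{\mu t_0}\|\varphi\|_{\Lambda^{2,\alpha}_s(\Sigma)}$, with the factor $e^{\mu t_0}$ coming from $e^{\mu T}$ at $T=t_0$. Finally I would patch the interior and collar estimates via a fixed cutoff supported in the overlap $\Sigma_{\delta''}\setminus\Sigma_{\delta'}$, and sum to obtain \eqref{eq:Apriori}. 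The main obstacle I expect is confirming the $N$-uniform $C^\alpha$ bounds on the rescaled coefficients; this reduces to the $C^{1,\alpha}$ boundary regularity of $\rho$ from Lemma \ref{lem:rhoGRAD}, the smoothness of the boundary-flattening diffeomorphism, and the observation that the metric-correction terms from $\Delta_\theta$ produce only $O(N)$ factors after rescaling, which cause no loss of control as $N\to 0$.
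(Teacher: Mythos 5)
Your proposal is correct and follows essentially the same route as the paper: split into an interior region and a boundary collar, rescale the degenerate cylinders $Q_i$ to unit scale so that the classical interior/boundary Schauder estimates apply (with boundary Schauder and the Dirichlet data near the initial slice $\{t_0\}\times\Sigma$), and unroll the rescaling to recover the weights $e^{\mu T}N^{-s+j}$ before patching with the interior estimate. The only cosmetic difference is that the paper first substitutes $v=\rho^2w$ to put the operator into the uniformly degenerate form $\mathfrak{L}$, whereas you keep the singular term $\kappa/\rho^2$ and observe directly that it becomes a bounded $C^\alpha$ coefficient on each rescaled cylinder where $\tilde y_{n-1}$ is pinned near $1$; both devices accomplish the same thing.
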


\begin{proof}
Recall $v=\rho^2 w$. In view of \eqref{eq:TRANSDIRICH}, and the continuity of multiplication by \(\rho^2\) from \(\Lambda_{s}^{k,\alpha}(\Sigma)\) to \(\Lambda_{s+2}^{k,\alpha}(\Sigma)\), and similarly for \(\Lambda_{\mu,s}^{k,\alpha}([t_0,\infty)\times\Sigma)\) to \(\Lambda_{\mu,s+2}^{k,\alpha}([t_0,\infty)\times\Sigma)\), it suffices to establish
\begin{equation*}
    \| w \|_{\Lambda_{\mu,s-2}^{2,\alpha}([t_0,\infty)\times\Sigma)} \leq C \{
        \| w \|_{\Lambda_{\mu,s-2}^{0}([t_0,\infty)\times\Sigma)}
        + \| f \|_{\Lambda_{\mu,s-2}^{0,\alpha}([t_0,\infty)\times\Sigma)}
        + e^{\mu t_0} \| \psi \|_{\Lambda_{s-2}^{2,\alpha}(\Sigma)}
    \},
\end{equation*}
where $\psi=\tfrac{\varphi}{\rho^2}\in  {\Lambda_{s-2}^{2,\alpha}(\Sigma)}$.
We split the domain into  the collar $\Sigma_\delta$ and its complement, derive Schauder estimates on each piece, and then patch them together. To this end, we first establish the following local estimates. Consider $0<\delta_1<\delta_2<\delta_3<\delta_4 < \delta_0$. We claim
\begin{equation}\label{eq:uno}
\begin{aligned}
&\| w \|_{\Lambda_{\mu,s-2}^{2,\alpha}([t_0,\infty)\times\Sigma_{\delta_3})} 
\\
&\quadd\leq C \{
\| w \|_{\Lambda_{\mu,s-2}^{0}([t_0,\infty)\times\Sigma_{\delta_4})}
+ \| f \|_{\Lambda_{\mu,s-2}^{0,\alpha}([t_0,\infty)\times\Sigma_{\delta_4})}
+ e^{\mu t_0} \| \psi \|_{\Lambda_{s-2}^{2,\alpha}(\Sigma_{\delta_4})}\},
\end{aligned}
\end{equation}
and
\begin{equation}\label{eq:dos}
\begin{aligned}
&\| w \|_{\Lambda_\mu^{2,\alpha}([t_0,\infty)\times(\Sigma \setminus \Sigma_{\delta_2}))} 
\\
&\quadd\leq C \{
\| w \|_{\Lambda_\mu^{0}([t_0,\infty)\times(\Sigma \setminus \Sigma_{\delta_1}))}
+ \| f \|_{\Lambda_\mu^{0,\alpha}([t_0,\infty)\times(\Sigma \setminus \Sigma_{\delta_1}))}
+ e^{\mu t_0} \| \psi \|_{C^{2,\alpha}(\Sigma \setminus \Sigma_{\delta_1})}
\}.
\end{aligned}
\end{equation}
Adding the estimates in \eqref{eq:uno} and \eqref{eq:dos} yields the desired result.

To prove \eqref{eq:uno}, given our fixed $\delta_0>0$, choose $0<\delta_3<\delta_4<\delta_0$ so that if $Q_3(T,Y',N)\in\mathcal{Q}_{3,\delta_3}$, then $Q_1(T,Y',N)\in\mathcal{Q}_{1,\delta_4}$. We aim to cover the set $Q_3(T,Y',N)$ by regions on which standard estimates apply. Now, fixing this particular choice of $(T,Y',N)$, we proceed by splitting into several cases.

{\it Case 1.} We consider $t_0\leq T< t_0+\frac{N}{2}$. One easily checks that
\begin{equation*}
    Q_3(T,Y',N)\subset  Q_1(t_0,Y',N) \cup Q_1(t_0+\frac{N}{2},Y',N),
\end{equation*}
and, in fact,
\begin{equation*}
    Q_1(t_0,Y',N) \cup Q_1(t_0+\frac{N}{2},Y',N)=[t_0,t_0+N)\times B_{\frac{N}{2}}(Y')\times(N-\frac{N}{2},N+\frac{N}{2}),
\end{equation*} 
as required. 
We consider the equation
\begin{equation}
\begin{aligned}
    \mathfrak{L}w=y^2_{n-1} &w_{tt}+y^2_{n-1} a_{ij}\partial_{ij}w+y_{n-1} b_i \partial_i w +cw = f \\ &\quad\text{in } (t_0,t_0+N)\times B_{\frac{N}{2}}(Y')\times(N-\frac{N}{2},N+\frac{N}{2}),
\end{aligned}
\end{equation}
where the boundary data is given by
\begin{equation*}
w = \psi \quad \text{on } \{t_0\}\times B_{\frac{N}{2}}(Y')\times(N-\frac{N}{2},N+\frac{N}{2}).
\end{equation*}

We proceed by rescaling the domain near $(T,Y',N)$ to a standard cylinder and applying the standard Schauder theory. Specifically, define
\[
(t, y', y_{n-1}) \overset{\Phi}{\longmapsto} (\tau,x',x_{n-1}) = \left( \frac{t-t_0}{N/2}, \frac{y'-Y'}{N/2},\, \frac{y_{n-1}-N}{N/2} \right),
\]
and observe $\Phi(Q_3(T, Y', N))$ is contained in $[0,2) \times B_1(0) \times (-1,1)$, the rescaled image of the union $Q_1(t_0, Y', N) \cup Q_1(t_0+\frac{N}{2}, Y', N)$. Letting $\omega = w \circ \Phi^{-1}$, the equation becomes
\begin{equation*}
\begin{aligned}
    \tilde{\mathfrak{L}}\omega=4(1+&\frac{x_{n-1}}{2})^2  \omega_{\tau\tau}+4(1+\frac{x_{n-1}}{2})^2 \tilde{a}_{ij}\tilde\partial_{ij}\omega+(1+\frac{x_{n-1}}{2})  \tilde{b}_i \tilde\partial_i \omega +\tilde{c}\omega = \tilde{f}
    \\
    &\text{in } (0,2)\times B_{1}(0)\times(-1,1),
\end{aligned}
\end{equation*}
where the boundary data is given by
\[
\omega = \tilde{\psi} \qquad \text{on } \{\tau = 0\} \times B_1(0) \times (-1,1).
\]
\noindent
The operator $\tilde{\mathfrak{L}}$ is uniformly elliptic, so by the boundary Schauder estimate, for any $(\tau,x',x_{n-1})\in \Phi(Q_3(T,Y',N))$, we have
\begin{equation*}
\begin{aligned}
&\sum_{j=0}^2|\nabla^j\omega(\tau,x',x_{n-1})|+[\nabla^2\omega]_{C^\alpha(\Phi(Q_3(T,Y',N))))}
\\
&\quadd\quadd\leq C\{{\lVert \omega\rVert}_{L^\infty([0,2)\times B_{1}(0)\times(-1,1))}+{\lVert \tilde{f}\rVert}_{C^{0,\alpha}([0,2)\times B_{1}(0)\times(-1,1))}+{\lVert \tilde{\psi}\rVert}_{C^{2,\alpha}(B_{1}(0)\times(-1,1)}\}.
\end{aligned}
\end{equation*}
By rescaling by $\Phi^{-1}$, multiplying by $e^{\mu T}N^{-s+2}$, evaluating at $(t,y',y_{n-1})=(T,Y',N)$, and taking the supremum over $(T,Y',N) \in[t_0,t_0+\frac{N}{2})\times G^{\delta_3}_{\delta_0}$, we obtain
\begin{equation}\label{eq:big-estimate1}
\begin{aligned}
  &\sup_{(T,Y',N) \in[t_0,t_0+\frac{N}{2})\times G^{\delta_3}_{\delta_0}}\{\sum_{j=0}^2e^{\mu T}N^{-s+2+j}|\nabla^j w(T,Y',N)|+
e^{\mu T}N^{-s+4+\alpha}[\nabla^2 w]_{C^\alpha(Q_3(T,Y',N))}\}
\\
&\quad\quad\quadd\leq C \{
        \| w \|_{\Lambda_{\mu,s-2}^{0}([t_0,\infty)\times\Sigma_{\delta_4})}
        + \| f \|_{\Lambda_{\mu,s-2}^{0,\alpha}([t_0,\infty)\times\Sigma_{\delta_4})}
        + e^{\mu t_0} \| \psi \|_{\Lambda_{s-2}^{2,\alpha}(\Sigma_{\delta_4})}
    \}.
\end{aligned}
\end{equation}
Note that since $T\in[t_0,t_0+\frac{N}{2})$ and $N<\delta_0$, we have $e^{\mu T}\leq Ce^{\mu t_0}$, giving us above the bound on the right hand side.

{\it Case 2.}
The case $T\geq t_0+\tfrac{N}{2}$ is similar and easier. By applying the same process as above to the case $Q_3(T,Y',N)\subset Q_1(T,Y',N)$, by the interior Schauder estimate, we have
\begin{equation}\label{eq:big-estimate2}
\begin{aligned}
    &\sup_{(T,Y',N) \in[t_0+\frac{N}{2},\infty)\times G^{\delta_3}_{\delta_0}}\{\sum_{j=0}^2e^{\mu T}N^{-s+2+j}|\nabla^j w(T,Y',N)|+
    e^{\mu T}N^{-s+4+\alpha}[\nabla^2 w]_{C^\alpha(Q_3(T,Y',N))}\}
    \\
  &\quad\quad\quad\leq C \{
        \| w \|_{\Lambda_{\mu,s-2}^{0}([t_0,\infty)\times\Sigma_{\delta_4})}
        + \| f \|_{\Lambda_{\mu,s-2}^{0,\alpha}([t_0,\infty)\times\Sigma_{\delta_4})}
        + e^{\mu t_0} \| \psi \|_{\Lambda_{s-2}^{2,\alpha}(\Sigma_{\delta_4})}
    \}.
\end{aligned}
\end{equation}
Combining \eqref{eq:big-estimate1} and \eqref{eq:big-estimate2}, we have \eqref{eq:uno}.

The proof of \eqref{eq:dos} is similar to the approach taken in \cite{HanLi2020}. For completeness, we include it here.
    On $\Sigma\setminus\Sigma_{\delta_2}\subset\Sigma\setminus\Sigma_{\delta_1}$, there are two cases: $t>t_0+2$ and $t_0\leq t\leq t_0+2$. First, consider $t>t_0+2$.
    Then, we have $[t-1,t+1]\times(\Sigma\setminus\Sigma_{\delta_2})\subset[t-2,t+2]\times(\Sigma\setminus\Sigma_{\delta_1})$.
    By the interior Schauder estimate, we have
\begin{equation*}
\begin{aligned}
  \sum\limits_{j=0}^2 &\sup\limits_{\Sigma\setminus \Sigma_{\delta_2}} |\nabla^jw(t,\cdot)|+[\nabla^2w]_{C^\alpha\big([t-1,t+1]\times (\Sigma\setminus \Sigma_{\delta_2})\big)}
\\
  &\leq C\{{\lVert w\rVert}_{L^\infty([t-2,t+2]\times(\Sigma\setminus\Sigma_{\delta_1})}+{\lVert f\rVert}_{L^\infty([t-2,t+2]\times(\Sigma\setminus\Sigma_{\delta_1})}+[f]_{C^\alpha([t-2,t+2]\times(\Sigma\setminus\Sigma_{\delta_1})}\}.
\end{aligned}
\end{equation*}
    To estimate the Hölder semi-norm of $f$ on $[t-2,t+2]\times(\Sigma\setminus\Sigma_{\delta_1})$ in the right-hand side, we take $(t_1,\theta_1),(t_2,\theta_2)\in [t-2,t+2]\times(\Sigma\setminus\Sigma_{\delta_1})$ with $(t_1,\theta_1)\neq (t_2,\theta_2).$ We consider two cases: $|t_1-t_2|\leq 2$ and $|t_1-t_2|\geq 2$. In the first case, there exists a $t'\in[t-1,t+1]$ such that $t_1,t_2\in[t'-1,t'+1]$. Hence, in either case, we get
    \begin{equation*}
        [f]_{C^\alpha([t-2,t+2]\times(\Sigma\setminus\Sigma_{\delta_1})}
        \leq \max\{
        \sup\limits_{t'\in[t-1,t+1]}[f]_{C^\alpha([t'-1,t'+1]\times(\Sigma\setminus\Sigma_{\delta_1})},{\lVert f\rVert}_{L^\infty([t-2,t+2]\times(\Sigma\setminus\Sigma_{\delta_1}))}\}.
    \end{equation*}
Then,
\begin{equation*}
\begin{aligned}
  &\sum\limits_{j=0}^2 \sup\limits_{\Sigma\setminus \Sigma_{\delta_2}} |\nabla^jw(t,\cdot)|+[\nabla^2w]_{C^\alpha\big([t-1,t+1]\times (\Sigma\setminus \Sigma_{\delta_2})\big)}
  \\
  &\leq C\{{\lVert w\rVert}_{L^\infty([t-2,t+2]\times(\Sigma\setminus\Sigma_{\delta_1})}+{\lVert f\rVert}_{L^\infty([t-2,t+2]\times(\Sigma\setminus\Sigma_{\delta_1})}+\sup\limits_{t'\in[t-1,t+1]}[f]_{C^\alpha([t'-1,t'+1]\times(\Sigma\setminus\Sigma_{\delta_1})}\}.
\end{aligned}
\end{equation*}
Multiplying by $e^{\mu t}$ on both sides and taking the supremum over $t\in(t_0+2,\infty)$, we have 
\begin{equation}\label{eq:big-estimate3}
\begin{aligned}
  \sum\limits_{j=0}^2&\sup\limits_{t\in(t_0+2,\infty)} \sup\limits_{\Sigma\setminus \Sigma_{\delta_2}} e^{\mu t}|\nabla^jw(t,\cdot)|+\sup\limits_{t\in(t_0+2,\infty)}e^{\mu t}[\nabla^2w]_{C^\alpha\big([t-1,t+1]\times (\Sigma\setminus \Sigma_{\delta_2})\big)}
\\
  &\quad\leq C \{
        \| w \|_{\Lambda_\mu^{0}([t_0,\infty)\times(\Sigma \setminus \Sigma_{\delta_1}))}
        + \| f \|_{\Lambda_\mu^{0,\alpha}([t_0,\infty)\times(\Sigma \setminus \Sigma_{\delta_1}))}
        + e^{\mu t_0} \| \psi \|_{C^{2,\alpha}(\Sigma \setminus \Sigma_{\delta_1})}
    \}.
\end{aligned}
\end{equation}

    Next, consider $t_0\leq t\leq t_0+2$. Then, we have $[t_0, t_0+3]\times(\Sigma\setminus\Sigma_{\delta_2})\subset[t_0,t_0+4]\times(\Sigma\setminus\Sigma_{\delta_1})$. By the boundary Schauder estimate, we get
\begin{equation*}
\begin{aligned}
  &\sum\limits_{j=0}^2 \sup\limits_{\Sigma\setminus \Sigma_{\delta_2}} |\nabla^jw(t,\cdot)|+[\nabla^2w]_{C^\alpha\big([t_0,t_0+3]\times (\Sigma\setminus \Sigma_{\delta_2})\big)}
    \\&\quadd\leq C\{{\lVert w\rVert}_{L^\infty([t_0,t_0+4]\times(\Sigma\setminus\Sigma_{\delta_1})}+{\lVert f\rVert}_{L^\infty([t_0,t_0+4]\times(\Sigma\setminus\Sigma_{\delta_1})}
    \\
    &\quadd\quadd\quadd\quadd\quadd+[f]_{C^\alpha([t_0,t_0+4]\times(\Sigma\setminus\Sigma_{\delta_1})}+{\lVert \psi\rVert}_{C^{2,\alpha}(\Sigma\setminus\Sigma_{\delta_1})}\}.
\end{aligned}
\end{equation*}
By a similar argument as above, we get
\begin{equation}\label{eq:big-estimate4}
\begin{aligned}
  \sum\limits_{j=0}^2& \sup\limits_{t\in[t_0,t_0+2]}\sup\limits_{\Sigma\setminus \Sigma_{\delta_2}} |\nabla^jw(t,\cdot)|+\sup\limits_{t\in[t_0+1,t_0+2]}[\nabla^2w]_{C^\alpha([t-1,t+1]\times(\Sigma\setminus\Sigma_{\delta_2})}
\\
  &\quad\leq C \{
        \| w \|_{\Lambda_\mu^{0}([t_0,\infty)\times(\Sigma \setminus \Sigma_{\delta_1}))}
        + \| f \|_{\Lambda_\mu^{0,\alpha}([t_0,\infty)\times(\Sigma \setminus \Sigma_{\delta_1}))}
        + e^{\mu t_0} \| \psi \|_{C^{2,\alpha}(\Sigma \setminus \Sigma_{\delta_1})}
    \}.
\end{aligned}
\end{equation}
Combining \eqref{eq:big-estimate3} and \eqref{eq:big-estimate4} yields \eqref{eq:dos}.
\end{proof}
Next, we estimate the $L^\infty$-norm of solutions of \eqref{eq:OGDIRICH} on cylinders of finite length and with homogeneous boundary conditions. The proof below is based on a rescaling argument adapted from \cite{MAZZEO1}.

\begin{lemma}\label{lem:Linfinity}
Let $s>2$, $\mu>\gamma_1$, and assume that $\mu\neq\gamma_i$ for any $i \geq 2$. Let $t_0$ and $T$ be constants with $t_0 \geq 0$ and $T-t_0\geq 4$, and let $f\in C([t_0,T]\times\Sigma)$. Suppose $v\in C^{2} ([t_0,T]\times\Sigma)$ satisfies $\int_\Sigma v(t,\theta)\phi_i(\theta)\,d\theta=0$ for any $\gamma_i<\mu$ and any $t\in[t_0,T]$, and 
\begin{equation*}
\begin{aligned}
    \mathcal{L}v&=f\quad\text{in }(t_0,T)\times\Sigma,
    \\
    v &=0 \quad \text{on } ((\{t_0\}\cup \{T\})\times\Sigma)\cup ((t_0,T)\times \partial\Sigma).
\end{aligned}
\end{equation*}
Then,
\begin{equation}
    \sup\limits_{(t,\theta)\in[t_0,T]\times\Sigma}\rho^{-s} e^{\mu t}|v(t,\theta)|\leq C\sup\limits_{(t,\theta)\in[t_0,T]\times\Sigma} \rho^{-s+2} e^{\mu t}|f(t,\theta)|,
\end{equation}
where $C$ is a positive constant depending only on $n,\alpha,\mu$, and $\Sigma$, and is independent of $t_0$ and $T$.
\end{lemma}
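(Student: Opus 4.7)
The plan is to argue by contradiction via a blow-up/rescaling argument in the spirit of Mazzeo. Suppose the estimate fails; then there exist sequences $t_{0,k}\ge 0$, $T_k$ with $T_k-t_{0,k}\ge 4$, and $v_k\in C^2$, $f_k\in C$ satisfying all the hypotheses such that
\[
\sup_{[t_{0,k},T_k]\times\Sigma}\rho^{-s}e^{\mu t}|v_k|=1,\qquad B_k:=\sup \rho^{-s+2}e^{\mu t}|f_k|\to 0.
\]
I choose $(t_k,\theta_k)$ with $\rho(\theta_k)^{-s}e^{\mu t_k}|v_k(t_k,\theta_k)|\ge \tfrac12$ and set $d_k=\rho(\theta_k)$. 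The analysis splits into cases according to whether $d_k$ stays bounded below or tends to $0$, and according to the relative positions of $t_k$ within $[t_{0,k},T_k]$.

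Interior case ($d_k\ge\delta_0>0$ along a subsequence): I translate in time by setting $u_k(s,\theta)=e^{\mu t_k}v_k(s+t_k,\theta)$. Then $|u_k|\le\rho^s e^{-\mu s}$, $|\mathcal{L}u_k|\le B_k\rho^{s-2}e^{-\mu s}$, the orthogonality $\int u_k(s,\cdot)\phi_i\,d\theta=0$ for $\gamma_i<\mu$ is preserved, and $u_k$ vanishes on any finite temporal endpoint and on $\partial\Sigma$. By Lemma \ref{lem:Apriori}, $u_k$ is uniformly bounded in $C^{2,\alpha}_{\mathrm{loc}}$, so a subsequential $C^2_{\mathrm{loc}}$ limit $u_\infty$ exists on one of the model domains $\mathbb{R}\times\Sigma$, $[0,\infty)\times\Sigma$, $(-\infty,0]\times\Sigma$, or $[0,L]\times\Sigma$ with $L\ge 4$, depending on whether $t_k-t_{0,k}$ and $T_k-t_k$ remain bounded or diverge. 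The limit satisfies $\mathcal{L}u_\infty=0$, has zero Dirichlet data at any finite endpoint, obeys $|u_\infty(s,\theta)|\le\rho(\theta)^s e^{-\mu s}$, and has $|u_\infty(0,\theta_\infty)|\ge\delta_0^{\,s}/2>0$. Expanding $u_\infty=\sum_i u_i(s)\phi_i(\theta)$ with $u_i''=\gamma_i^2 u_i$, the orthogonality kills $u_i$ for $\gamma_i<\mu$, while for $\gamma_i>\mu$ (strict since $\mu\notin\mathcal{I}$), the $L^2$-bound $\|u_\infty(s,\cdot)\|_{L^2(\Sigma)}\le Ce^{-\mu s}$ (from integrability of $\rho^{2s}$) together with boundary conditions at finite endpoints or exponential growth/decay analysis at $\pm\infty$ forces each $u_i\equiv 0$. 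Hence $u_\infty\equiv 0$, contradiction.

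Boundary case ($d_k\to 0$): I work in boundary-flattening coordinates $\Psi_k$ adapted to $\theta_k$ with $y_{n-1}=\rho$, and rescale both time and spatial variables by $d_k$ via
\[
\tilde u_k(\tilde t,\tilde y',y)=d_k^{-s}\,e^{\mu(t_k+d_k\tilde t)}\,v_k\bigl(t_k+d_k\tilde t,\,\Psi_k^{-1}(d_k\tilde y',d_k y)\bigr).
\]
A direct calculation gives $|\tilde u_k|\le y^s$ uniformly, $|\tilde u_k(0,0,1)|\ge\tfrac12$, and the rescaled equation converges to the model equation $(\partial_{\tilde t\tilde t}+\Delta_{\tilde y'}+\partial_{yy}-\kappa/y^2)\tilde u_\infty=0$ on a limit domain $D_\infty$ whose $\tilde t$-extent depends on whether $(t_k-t_{0,k})/d_k$ and $(T_k-t_k)/d_k$ diverge. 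The substitution $U=\tilde u_\infty/y^s$ with the indicial identity $s(s-1)=\kappa$ reduces the model equation to $(\partial_{\tilde t\tilde t}+\Delta_{\tilde y'}+\partial_{yy}+(2s/y)\partial_y)U=0$, which under radial extension in $y$ to $\mathbb{R}^{n+3}$ is the Laplace equation in $\mathbb{R}^{2n+2}$. Since $U$ is bounded, Liouville gives $U\equiv c$, so $\tilde u_\infty=c\,y^s$ with $|c|\ge\tfrac12$. If $D_\infty$ has a finite $\tilde t$-endpoint, the homogeneous Dirichlet condition there forces $c=0$, a contradiction. In the remaining sub-case $D_\infty=\mathbb{R}\times\mathbb{R}^{n-2}\times(0,\infty)$, I use the orthogonality $\int v_k\phi_1\,d\theta\equiv 0$ and the asymptotic $\phi_1\sim c_{\phi_1}\rho^s$ at $\partial\Sigma$: the profile $cy^s$ coincides with the leading $\phi_1$-boundary mode, and the vanishing of the global $\phi_1$-projection of $v_k$ excludes $c\ne 0$.

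The main obstacle is exactly this last sub-case, where the Liouville step only produces $\tilde u_\infty=cy^s$ rather than $\tilde u_\infty\equiv 0$. Converting the global orthogonality condition $\int v_k\phi_1\,d\theta=0$ into a pointwise exclusion of the surviving $\phi_1$-profile at the blow-up point is the delicate analytic step, and rests on the fact that the first eigenmode $\phi_1$ captures precisely the indicial boundary behavior $\rho^s$.
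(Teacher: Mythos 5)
Your interior case (where $\rho(\theta_k)$ stays bounded below) is essentially the paper's argument: translate in time, pass to a limit, and kill the limit mode by mode using the orthogonality for $\gamma_i<\mu$ and the growth bound $e^{-\mu s}$ for $\gamma_i>\mu$. That part is sound. The genuine gap is in your boundary case, and it is exactly where you flag it. The weight $s$ is an indicial root of the model operator $\partial_{yy}-\kappa/y^2$ (that is the content of $s(s-1)=\kappa$), so the profile $cy^{s}$ is an honest solution of the blown-up equation on $\mathbb{R}\times\mathbb{R}^{n-2}\times(0,\infty)$ satisfying every constraint that survives the rescaling, and your Liouville step cannot do better than $\tilde u_\infty=cy^s$. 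The proposed rescue via the condition $\int_\Sigma v_k\phi_1\,d\theta=0$ cannot work: that is a single global integral over all of $\Sigma$, while the blow-up limit only records the behavior of $v_k$ in an $O(d_k)$-neighborhood of one boundary point, so the integral condition imposes no constraint on the limit profile. Worse, by Theorem~\ref{thm:weightedEV} \emph{every} eigenfunction $\phi_i$ has the same boundary behavior $|\phi_i|\le C_i\rho^{s}$ — $s$ is the indicial exponent for all modes, not a signature of $\phi_1$ — so the local profile $cy^s$ does not identify a $\phi_1$-component, and a function orthogonal to $\phi_1$ (indeed to all $\phi_i$ with $\gamma_i<\mu$) can still produce $c\neq 0$ at the blow-up scale. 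As written, the contradiction argument therefore fails in the full-space sub-case, which is reachable (e.g.\ $d_k\to0$ with $t_k$ in the middle of a long interval).

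For comparison, the paper never blows up at the boundary and so never meets this Liouville degeneracy. It only translates in time, and it handles the degeneracy near $\partial\Sigma$ by proving, via $C^1$ estimates on the rescaled degenerate cylinders (the same covering used in Lemma~\ref{lem:Apriori}), a uniform weighted gradient bound $|\nabla\tilde v_i|\le C\rho^{s-1}$. Combined with the Dirichlet condition on $(t_i,T_i)\times\partial\Sigma$ and the normalization, this is used to show that the point where $\rho^{-s}|\tilde v_i(0,\cdot)|$ attains its maximum stays in the interior of $\Sigma$, so the locally uniform limit $\hat v$ is nontrivial; the contradiction then comes entirely from the one-dimensional spectral analysis of $\hat v$, exactly as in your interior case. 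If you want to salvage a boundary blow-up, the standard device is to run the argument with a weight $\rho^{-s+\epsilon}$ strictly between the indicial roots $1-s$ and $s$, where the Liouville classification does force the limit to vanish, and then upgrade to the weight $s$; but that is additional work not present in your proposal, so as it stands the proof is incomplete.
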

\begin{proof} 
We prove the result by contradiction. Suppose there exist sequences $\{ t_i\}$, $\{ T_i\}$, $\{v_i\}$, and $\{f_i\}$ with $t_i\geq0$ and $T_i-t_i\geq4$, such that 
\begin{equation}
\begin{aligned}
    \mathcal{L}v_i&=f_i  \quad\text{in }(t_i,T_i)\times\Sigma,
    \\
    v_i &=0 \quad \text{on } ((\{t_i\}\cup \{T_i\})\times\Sigma)\cup ((t_i,T_i)\times \partial\Sigma),
\end{aligned}
\end{equation}
and
\begin{equation}
    \sup\limits_{(t,\theta)\in[t_i,T_i]\times\Sigma} \rho^{-s+2} e^{\mu t}|f_i(t,\theta)|=1,
\end{equation}
\begin{equation}
    \sup\limits_{(t,\theta)\in[t_i,T_i]\times\Sigma} \rho^{-s} e^{\mu t}|v_i(t,\theta)| \to\infty  \quad \text{as } i\to\infty.
\end{equation}
Choose $t^*_i\in(t_i,T_i)$ such that 
\begin{equation*}
    A_i\equiv \sup\limits_{\theta\in\Sigma}\rho^{-s}(\cdot)e^{\mu t^*_i}|v_i(t^*_i,\cdot)|= \sup\limits_{(t,\theta)\in[t_i,T_i]\times\Sigma}  \rho^{-s}(\theta)e^{\mu t}|v_i(t,\theta)|.
\end{equation*}
Then, $A_i\to\infty$ as $i\to\infty$. Define
\begin{equation*}
    \tilde{v}_i(t,\theta)=A^{-1}_ie^{\mu t^*_i}v_i(t+t^*_i,\theta),
\end{equation*}
and
\begin{equation*}
    \tilde{f}_i(t,\theta)=A^{-1}_ie^{\mu t^*_i}f_i(t+t^*_i,\theta).
\end{equation*}
Then,
\begin{equation}\label{eq:mass}
    \sup\limits_{\Sigma}\big(\rho^{-s}(\cdot)|\tilde{v}_i(0,\cdot)|\big)=1 ,
\end{equation}
and, for any $(t,\theta)\in [t_i-t^*_i,T_i-t^*_i]\times\Sigma$,
\begin{equation}\label{eq:vtilde}
    \rho^{-s}(\theta)e^{\mu t}|\tilde{v}_i(t,\theta)|\leq 1,
\end{equation}
and 
\begin{equation}\label{eq:ftilde}
    \rho^{-s+2}(\theta)e^{\mu t}|\tilde{f}_i(t,\theta)|\leq A^{-1}_i . 
\end{equation}
Moreover,
\begin{equation*}
    \mathcal{L}\tilde{v}_i=\tilde{f}_i \quad \text{on }(t_i-t^*_i,T_i-t^*_i)\times\Sigma,
\end{equation*}
specifically,
\begin{equation*}
    \partial_{tt}\tilde{v}_i+\Delta_\theta \tilde{v}_i-\frac{1}{4}(n-2)^2\tilde{v}_i-\frac{1}{4}n(n+2)\frac{\tilde{v}_i}{\rho^2}=\tilde{f}_i \quad \text{on }(t_i-t^*_i,T_i-t^*_i)\times\Sigma.
\end{equation*}
Passing to subsequences, we assume, for some $\tau_-\in\mathbb{R}^-\cup\{-\infty\}$ and $\tau_+\in\mathbb{R}^+\cup\{\infty\}$,
\begin{equation*}
    t_i-t^*_i\rightarrow \tau_-,\quad T_i-t^*_i \rightarrow \tau_+.
\end{equation*}
We claim that $\tau_-<0$ if it is finite, and similarly $\tau_+>0$ if it is finite. Thus, to obtain a contradiction, assume that $\tau_-=\lim\limits_{i\to\infty}t_i-t^*_i\to 0^-$. By \eqref{eq:vtilde}, we get
\begin{equation*}
    |\tilde{v}_i(t,\theta)|\leq C\rho^{s}(\theta)e^{\mu(t^*_i-t_i)} \quad\text{on } [t_i-t^*_i,t_i-t^*_i+2]\times\Sigma.
\end{equation*}
Similarly, by \eqref{eq:ftilde}, we get
\begin{equation*}
    |\tilde{f}_i(t,\theta)|\leq CA^{-1}_i\rho^{s-2}e^{\mu(t^*_i-t_i)} \quad\text{on } [t_i-t^*_i,t_i-t^*_i+2]\times\Sigma.
\end{equation*}
We have 
\begin{equation*}
    \partial_{tt}\tilde{v}_i+\Delta_\theta \tilde{v}_i=\tilde{f}_i+\frac{1}{4}n(n+2)\frac{\tilde{v}_i}{\rho^2}+\frac{1}{4}(n-2)^2\tilde{v}_i \quad \text{on }(t_i-t^*_i,t_i-t^*_i+2)\times\Sigma.
\end{equation*}
Hence,
\begin{equation*}
    |\partial_{tt}\tilde{v}_i+\Delta_\theta \tilde{v}_i|\leq C\rho^{s-2}e^{\mu(t^*_i-t_i)}\quad\text{on } (t_i-t^*_i,t_i-t^*_i+2)\times\Sigma.
\end{equation*}
Consider any $\Sigma'\subset\subset\Sigma$. Then,
\begin{equation*}
    [t_i-t^*_i,t_i-t^*_i+1)\times \Sigma'  \subset\subset [t_i-t^*_i,t_i-t^*_i+2)\times \Sigma.
\end{equation*}
Then, for any $t\in (t_i-t^*_i,t_i-t^*_i+1)$ and $\theta\in\Sigma''\subset\Sigma'$, since $\tilde{v}_i=0$ on $\{t_i-t^*_i\}\times\Sigma$, by the $C^1$-estimate up to the boundary, we have
\begin{equation*}
    |\nabla \tilde{v}_i(t,\theta)|\leq C\sup\limits_{(t_i-t^*_i,t_i-t^*_i+2)\times\Sigma'} |\tilde{f}_i|\leq Ce^{\mu(t^*_i-t_i)}.
\end{equation*}

Now, we need to estimate $|\nabla \tilde{v}_i|$ near $\partial\Sigma$.
In the coordinates provided by $\Psi$ as in \eqref{eq:boundaryflat}, with $v'_i=\tilde{v}_i\circ\Psi^{-1}$ and $f'_i=\tilde{f}_i\circ\Psi^{-1}$, we have
\begin{equation*}           \partial_{tt}v'_i+a_{kl}\partial_{kl}v'_i+b_k\partial_{k}v'_i +cv'_i =f'_i+\frac{1}{4}n(n+2)\frac{v'_i}{y_{n-1}^2} \quad \text{on }(t_i-t^*_i,t_i-t^*_i+2)\times G_{\delta},
\end{equation*}
and the associated operator on $v'_i$ has $C^{0,\alpha}$ coefficients and is uniformly elliptic on $(t_i-t^*_i,t_i-t^*_i+2)\times G_{\delta}$.
Using coordinates $(t,y',y_{n-1})$ where $y_{n-1}=\rho$, fix a point $(T,Y',N)\in (t_i-t^*_i,t_i-t^*_i+2)\times G_{\delta}$. 

We first deal with the case when $t_i-t^*_i<T < t_i-t^*_i+\frac{N}{4}$. Consider $[t_i-t^*_i,t_i-t^*_i+\frac{N}{4})\times B_{\frac{N}{4}}(Y')\times (N-\frac{N}{4},N+\frac{N}{4})$ and the transformation 
\begin{equation*}
    \Phi(t,y',y_{n-1})=(\tau,x',x_{n-1})=(\frac{t-(t_i-t^*_i)}{N/2},\frac{y'-Y'}{N/4}, \frac{y_{n-1}-N}{N/4}).
\end{equation*}
Then,
\begin{equation*}
    \Phi:[t_i-t^*_i,t_i-t^*_i+\frac{N}{2})\times B_{\frac{N}{2}}(Y')\times (N-\frac{N}{2},N+\frac{N}{2})\to [0,1)\times B_1(0)\times (-1,1),
\end{equation*}
and
\begin{equation*}
    \Phi :[t_i-t^*_i,t_i-t^*_i+\frac{N}{4})\times B_{\frac{N}{4}}(Y')\times (N-\frac{N}{4},N+\frac{N}{4})\to [0,\frac{1}{2})\times B_{\frac{1}{2}}(0)\times (-\frac{1}{2},\frac{1}{2}).
\end{equation*}
For $v''_i=v'_i\circ\Phi^{-1}$ and $f''_i=f'_i\circ\Phi^{-1}$, we have
\begin{equation*}
\begin{aligned}
    &\frac{\partial_{\tau\tau}v''_i}{N^2}+\frac{\tilde a_{kl} \partial_{kl}v''_i}{N^2}+\frac{\tilde b_k\partial_{k}v''_i}{N}+\tilde{c}v''_i
    \\
    &\quad=f''_i+\frac{1}{4}n(n+2)\frac{v''_i}{(\frac{N}{4}x_{n-1}+N)^2} \quad \text{in }(0,1)\times B_1(0)\times (-1,1),
    \\
    &v''_i=0 \quad\text{on } \{0\}\times B_1(0)\times (-1,1).
\end{aligned}
\end{equation*}
Multiplying both sides by $N^2$, we have 
\begin{equation*}
    \partial_{\tau\tau}v''_i+\tilde a_{kl} \partial_{kl}v''_i+N\tilde b_k\partial_{k}v''_i+ N^2\tilde{c}v''_i =N^2f''_i+\frac{1}{4}n(n+2)\frac{v''_i}{(1+\frac{x_{n-1}}{4})^2} \quad \text{in }(0,1)\times B_1(0)\times (-1,1).
\end{equation*}
By the $C^1$-estimate up to the boundary, we have, for all $(\tau,x',x_{n-1})\in(0,\frac{1}{2})\times B_{\frac{1}{2}}(0)\times (-\frac{1}{2},\frac{1}{2})$,
\begin{equation*}
\begin{aligned}
    |\nabla v''_i(\tau,x',x_{n-1})|&\leq C\{ \|v''_i\|_{L^\infty((0,1)\times B_{1}(0)\times(-1,1))}
    +\|N^2f''_i\|_{L^\infty((0,1)\times B_{1}(0)\times(-1,1))}
    \\
    &\quad+\|\frac{1}{(1+\frac{x_{n-1}}{4})^2}v''_i\|_{L^\infty((0,1)\times B_{1}(0)\times(-1,1))} \}.
\end{aligned}
\end{equation*}
Since the factor $\frac{N^2}{\rho^2}=(1+\frac{x_{n-1}}{4})^{-2}$ is uniformly bounded from above and below,  we have, for all $(\tau,x',x_{n-1})\in(0,\frac{1}{2})\times B_{\frac{1}{2}}(0)\times (-\frac{1}{2},\frac{1}{2})$,
\begin{equation*}
    |\nabla v''_i(\tau,x',x_{n-1})|\leq C\{ \|v''_i\|_{L^\infty((0,1)\times B_{1}(0)\times(-1,1))}+\|N^2f''_i\|_{L^\infty((0,1)\times B_{1}(0)\times(-1,1))} \}.
\end{equation*}
Scaling back under $\Phi$, we have, for all $(t,y',y_{n-1})\in[t_i-t^*_i,t_i-t^*_i+\frac{N}{4})\times B_{\frac{N}{4}}(Y')\times (N-\frac{N}{4},N+\frac{N}{4})$,
\begin{equation*}
\begin{aligned}
    N|\nabla v'_i(t,y',y_{n-1})|
    &\leq  C\{ \|v'_i\|_{L^\infty([t_i-t^*_i,t_i-t^*_i+\frac{N}{2})\times B_{\frac{N}{2}}(Y')\times (\frac{N}{2},\frac{3N}{2}))}
    \\
    &\quadd+\|N^2f'_i\|_{L^\infty([t_i-t^*_i,t_i-t^*_i+\frac{N}{2})\times B_{\frac{N}{2}}(Y')\times (\frac{N}{2},\frac{3N}{2}))} \}.
\end{aligned}
\end{equation*}
We know that 
\begin{equation*}
    |\tilde{f}_i|\leq CA^{-1}_i\rho^{s-2}e^{\mu(t^*_i-t_i)},
\end{equation*}
and 
\begin{equation*}
    |v'_i|\leq C\rho^{s}e^{\mu(t^*_i-t_i)} .
\end{equation*}
Thus, for all $(t,y',y_{n-1})\in[t_i-t^*_i,t_i-t^*_i+\frac{N}{4})\times B_{\frac{N}{4}}(Y')\times (N-\frac{N}{4},N+\frac{N}{4})$, we have
\begin{equation*}
    |\nabla v'_i(t,y',y_{n-1})|\leq C\rho^{s-1}e^{\mu(t^*_i-t_i)}.
\end{equation*}
In particular,
\begin{equation*}
    |\nabla v'_i(T,Y',N)|\leq Ce^{\mu(t^*_i-t_i)}.
\end{equation*}

Now, if $T\geq t_i-t^*_i+\frac{N}{4}$, we consider $(T-\frac{N}{8}, T+\frac{N}{8})\times B_{\frac{N}{8}}(Y')\times (N-\frac{N}{8},N+\frac{N}{8})$. Note that $T-\frac{N}{8}>t_i-t^*_i$. Consider the transformation 
\begin{equation*}
    \Phi(t,y',y_{n-1})=(\tau,x',x_{n-1})=(\frac{t-T}{N/8},\frac{y'-Y'}{N/8}, \frac{y_{n-1}-N}{N/8}).
\end{equation*}
Then,
\begin{equation*}
    \Phi :(T-\frac{N}{8}, T+\frac{N}{8})\times B_{\frac{N}{8}}(Y')\times (N-\frac{N}{8},N+\frac{N}{8})\to (-1,1)\times B_{1}(0)\times (-1,1),
\end{equation*}
and
\begin{equation*}
    \Phi :(T-\frac{N}{16}, T+\frac{N}{16})\times B_{\frac{N}{16}}(Y')\times (N-\frac{N}{16},N+\frac{N}{16})\to (-\frac{1}{2},\frac{1}{2})\times B_{\frac{1}{2}}(0)\times (-\frac{1}{2},\frac{1}{2}).
\end{equation*}
For $v''_i=v'_i\circ\Phi^{-1}$ and $f''_i=f'_i\circ\Phi^{-1}$, we have
\begin{equation*}
\begin{aligned}
    &\frac{\partial_{\tau\tau}v''_i}{N^2}+\frac{\tilde{a}_{kl}\partial_{kl}v''_i}{N^2}+\frac{\tilde{b}_k\partial_{k}v''_i}{N}+\tilde{c}v''_i
    \\
    &\quad\quadd=f''_i+\frac{1}{4}n(n+2)\frac{v''_i}{(\frac{N}{8}x_{n-1}+N)^2} \quad \text{in }(-1,1)\times B_{1}(0)\times (-1,1).    
\end{aligned}
\end{equation*}
Multiplying both sides by $N^2$, we have
\begin{equation*}
\begin{aligned}
&\partial_{\tau\tau}v''_i+\tilde{a}_{kl}\partial_{kl}v''_i+N\tilde{b}_k\partial_{k}v''_i+\tilde{c}N^2v''_i
\\
&\quadd\quadd=N^2f''_i+\frac{1}{4}n(n+2)\frac{1}{(1+\frac{x_{n-1}}{8})^2}v''_i \quad \text{in }\,(-1,1)\times B_1(0)\times (-1,1).
\end{aligned}
\end{equation*}
By the interior $C^1$-estimate, we have, for all $(\tau,x',x_{n-1})\in(-\frac{1}{2},\frac{1}{2})\times B_{\frac{1}{2}}(0)\times (-\frac{1}{2},\frac{1}{2})$,
\begin{equation*}
\begin{aligned}
    &|\nabla v''_i(\tau,x',x_{n-1})|
\leq C\{ \|v''_i\|_{L^\infty((-1,1)\times B_{1}(0)\times (-1,1))}+\|N^2f''_i\|_{L^\infty((-1,1)\times B_{1}(0)\times (-1,1))}
\\
&\quadd\quadd\quadd\quadd +\|\frac{1}{(1+\frac{x_{n-1}}{8})^2}v''_i\|_{L^\infty((-1,1)\times B_{1}(0)\times(-1,1))} \}.
\end{aligned}
\end{equation*}
As before, we have, for all $(\tau,x',x_{n-1})\in(-\frac{1}{2},\frac{1}{2})\times B_{\frac{1}{2}}(0)\times (-\frac{1}{2},\frac{1}{2})$,
\begin{equation*}
    |\nabla v''_i(\tau,x',x_{n-1})|\leq C\{ \|v''_i\|_{L^\infty((-1,1)\times B_{1}(0)\times(-1,1))}+\|N^2f''_i\|_{L^\infty((-1,1)\times B_{1}(0)\times(-1,1))} \}.
\end{equation*}
Scaling back via $\Phi$, we have, for all $(t,y',y_{n-1})\in(T-\frac{N}{16}, T+\frac{N}{16})\times B_{\frac{N}{16}}(Y')\times (N-\frac{N}{16},N+\frac{N}{16})$,
\begin{equation*}
\begin{aligned}
 N|\nabla v'_i(t,y',y_{n-1})|
&\leq C\{ \|v'_i\|_{L^\infty((T-\frac{N}{8}, T+\frac{N}{8})\times B_{\frac{N}{8}}(Y')\times (N-\frac{N}{8},N+\frac{N}{8}))}
\\
&\quadd+N^2\|f'_i\|_{L^\infty((T-\frac{N}{8}, T+\frac{N}{8})\times B_{\frac{N}{8}}(Y')\times (N-\frac{N}{8},N+\frac{N}{8}))} \}.
\end{aligned}
\end{equation*}
We know that 
\begin{equation*}
    |f'_i|\leq CA^{-1}_i\rho^{s-2}e^{\mu(t^*_i-t_i)},
\end{equation*}
and 
\begin{equation*}
    |v'_i|\leq C\rho^{s}e^{\mu(t^*_i-t_i)}.
\end{equation*}
Thus, we have, for all $(t,y',y_{n-1})\in(T-\frac{N}{8}, T+\frac{N}{8})\times B_{\frac{N}{8}}(Y')\times (N-\frac{N}{8},N+\frac{N}{8})$,
\begin{equation*}
    |\nabla v'_i(t,y',y_{n-1})|\leq C\rho^{s-1}e^{\mu(t^*_i-t_i)}.
\end{equation*}
In particular,
\begin{equation*}
    |\nabla v'_i(T,Y',N)|\leq Ce^{\mu(t^*_i-t_i)}.
\end{equation*}
Therefore, we have 
\begin{equation}\label{eq:gradbound}
    |\nabla \tilde{v}_i|\leq C\rho^{s-1}e^{\mu(t^*_i-t_i)}\quad\text{on } (t_i-t^*_i,t_i-t^*_i+1)\times \Sigma.
\end{equation}
This proves that $t_i-t^*_i$ remains bounded away from zero. Similarly, $T_i-t^*_i$ remains bounded away from zero. Further, we assume 
\begin{equation*}
    \tilde{v}_i\to \hat{v} \quad \text{in every compact subset of }(\tau_-,\tau_+)\times\Sigma.
\end{equation*}
We also have $\tilde{f}_i\to 0$ in every compact subset of $(\tau_-,\tau_+)\times\Sigma$. Additionally, \eqref{eq:mass} and \eqref{eq:gradbound} together with the homogeneous boundary condition on $(t_i,T_i)\times\partial\Sigma$ ensure that the point at which $\rho^{-s}(\cdot)|\tilde{v}_i(0,\cdot)|$ attains its maximum must lie in the interior of $\Sigma$. Consequently, $\hat{v}\neq0$,
\begin{equation}\label{eq:vhatbound}
    e^{\mu t}|\hat{v}(t,\theta)|\leq C \quad\text{for any }(t,\theta)\in(\tau_-,\tau_+)\times\Sigma,
\end{equation}
and
\begin{equation*}
    \mathcal{L}\hat{v}=0 \quad \text{on }(\tau_-,\tau_+)\times\Sigma.
\end{equation*}
Moreover,
\begin{equation}\label{eq:vhatlimit}
    \lim\limits_{t\to\tau_*}\hat{v}(t,\cdot)=0,
\end{equation}
where $\tau_*=\tau_-$ or $\tau_+$ if it is finite. 

Next, we proceed as in the proof of Lemma \ref{lem:Uniqueness}. For any $i\geq 1$, set
\begin{equation*}
    \hat{v}_i(t)=\int_{\Sigma} \hat{v}(t,\theta)\phi_i(\theta)\,d\theta.
\end{equation*}
Then, $L_i\hat{v}_i=0$, and hence $\hat{v}_i$ is a linear combination of the basis of $\text{Ker}(L_i)$. By the assumption, $\hat{v}_i=0$ for any $i$ with $\gamma_i<\mu$. We now take an $i$ with $\gamma_i>\mu$. Then,
\begin{equation*}
    \hat{v}_i(t)=c_1e^{-\gamma_it}+c_2e^{\gamma_it},
\end{equation*}
where $c_1,c_2\in\mathbb{R}$. By \eqref{eq:vhatbound}, for any $t\in(\tau_-,\tau_+)$,
\begin{equation*}
    |e^{\mu t}\hat{v}_i(t)|\leq C.
\end{equation*}
If $\tau_+=\infty$, then $c_2=0$ and hence $\hat{v}_i(t)=c_1e^{-\gamma_it}$, which decays exponentially as $t\to\infty$. If $\tau_+$ is finite, then $\lim\limits_{t\to\tau_+}\hat{v}_i(t)=0$ by \eqref{eq:vhatlimit}. Similarly, if $\tau_-=-\infty$ then $c_1=0$, and hence $\hat{v}_i(t)=c_2e^{\gamma_i t}$, which decays exponentially as $t\to -\infty$. If $\tau_-$ is finite, then $\lim\limits_{t\to\tau_-}\hat{v}_i (t)=0$ by \eqref{eq:vhatlimit}. There are no boundary terms because of the exponential decay and vanishing boundary values, so by integration by parts on $\hat{v}_iL_i\hat{v}_i$ from $\tau_-$ to $\tau_+$, we get
\begin{equation*}
    \int^{\tau_+}_{\tau_-}\bigl[(\partial_t\hat{v}_i)^2 + (\lambda_i+\beta^2)\hat{v}_i^2 \bigr]\,dt=0.
\end{equation*}
In conclusion, $\hat{v}_i=0$ for any $i$ and hence $\hat{v}=0$, which leads to a contradiction.
\end{proof}

To construct solutions of \eqref{eq:OGLINEAREQ}, we first consider exponentially decaying solutions in appropriate finite-dimensional subspaces of $L^2(\Sigma)$.
\begin{lemma}\label{lem:FINITESUB} Let $\alpha\in (0,1)$, $s>2$, $\mu>\gamma_I$ for some $I\geq 2$, and $f\in \Lambda^{0,\alpha}_{\mu,s-2}([t_0,\infty)\times\Sigma)$ with $f(t,\cdot)\in\operatorname{span}\{\phi_1,\phi_2,\cdots,\phi_I\}$ for any $t\geq t_0$. Then, there exists a unique solution $v\in \Lambda^{2,\alpha}_{\mu,s}([t_0,\infty)\times\Sigma)$ of \eqref{eq:OGLINEAREQ} with $v(t,\cdot)\in \operatorname{span}\{\phi_1,\phi_2,\cdots,\phi_I\}$ for any $t\geq t_0$. Moreover, the correspondence $f\mapsto v$ is linear, and 
\begin{equation}\label{eq:FINITESUBEST}
    {\lVert v\rVert}_{\Lambda_{\mu,s}^{2,\alpha}([t_0,\infty)\times\Sigma)} \leq C{\lVert f\rVert}_{\Lambda_{\mu,s-2}^{0,\alpha}([t_0,\infty)\times\Sigma)},
\end{equation} 
where $C$ is a positive constant depending only on $n,\alpha,\mu,\Sigma$, and $I$, and is independent of $t_0$.
\end{lemma}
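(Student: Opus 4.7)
The plan is to reduce the PDE to a finite system of decoupled ODEs by projecting onto the eigenfunction basis, solve each ODE explicitly on $[t_0,\infty)$ via a one-sided Green's function that enforces the correct decay at $+\infty$, and then upgrade the resulting $\Lambda^0_{\mu,s}$ bound to $\Lambda^{2,\alpha}_{\mu,s}$ by invoking Lemma~\ref{lem:Apriori}.

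Write $f_i(t)=\int_\Sigma f(t,\theta)\phi_i(\theta)\,d\theta$ and seek $v$ in the form $v(t,\theta)=\sum_{i=1}^I v_i(t)\phi_i(\theta)$. Since $\mathcal{L}(v_i\phi_i)=(L_iv_i)\phi_i$ with $L_iv_i=v_i''-\gamma_i^2 v_i$, the PDE is equivalent to the $I$ decoupled ODEs $L_iv_i=f_i$ on $[t_0,\infty)$. The bound $|\phi_i|\leq C\rho^s$ from Theorem~\ref{thm:weightedEV} together with the weight $\rho^{s-2}$ in $\Lambda^{0,\alpha}_{\mu,s-2}$ gives $|f_i(t)|\leq Ce^{-\mu t}\|f\|_{\Lambda^{0,\alpha}_{\mu,s-2}}$ (the $i$-dependence of the constant is harmless because $I$ is fixed).

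For each $i$, construct $v_i$ via the explicit formula
\[
  v_i(t) \;=\; -\frac{1}{2\gamma_i}\Bigl[\,e^{\gamma_i t}\!\int_t^\infty e^{-\gamma_i s}f_i(s)\,ds \;-\; e^{-\gamma_i t}\!\int_t^\infty e^{\gamma_i s}f_i(s)\,ds\,\Bigr].
\]
The first integral is trivially convergent, and the second converges precisely because $\mu>\gamma_I\geq\gamma_i$ makes the integrand $|e^{\gamma_i s}f_i(s)|\lesssim e^{(\gamma_i-\mu)s}$ integrable near $+\infty$. A direct differentiation shows $L_iv_i=f_i$, and estimating each integral yields
\[
  |v_i(t)|+|v_i'(t)|+|v_i''(t)| \;\leq\; \frac{C}{\mu^2-\gamma_i^2}\,e^{-\mu t}\,\|f\|_{\Lambda^{0,\alpha}_{\mu,s-2}}.
\]
Assembling $v=\sum_{i=1}^I v_i\phi_i$, one has $\mathcal{L}v=f$; moreover, the spatial bound $|\nabla_\theta^k\phi_i|\leq C\rho^{s-k}$ (Theorem~\ref{thm:weightedEV} for $k\leq 1$, extended to larger $k$ by Schauder estimates applied to $L\phi_i=-\lambda_i\phi_i$, with constants uniform since $I$ is fixed) delivers $\|v\|_{\Lambda^0_{\mu,s}([t_0,\infty)\times\Sigma)}\leq C\|f\|_{\Lambda^{0,\alpha}_{\mu,s-2}}$. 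Setting $\varphi(\theta):=v(t_0,\theta)=\sum_iv_i(t_0)\phi_i(\theta)$, we similarly obtain $e^{\mu t_0}\|\varphi\|_{\Lambda^{2,\alpha}_s(\Sigma)}\leq C\|f\|_{\Lambda^{0,\alpha}_{\mu,s-2}}$. Lemma~\ref{lem:Apriori} applied to the Dirichlet problem \eqref{eq:OGDIRICH} with boundary data $\varphi$ then yields \eqref{eq:FINITESUBEST}.

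Uniqueness is immediate from the same ODE decomposition: if $\mathcal{L}v=0$ with $v\in\Lambda^{2,\alpha}_{\mu,s}$ lying in the span, each $v_i$ solves $L_iv_i=0$ and so $v_i(t)=c_{i,1}e^{-\gamma_it}+c_{i,2}e^{\gamma_it}$; the decay $e^{\mu t}|v_i(t)|\leq C$ with $\mu>\gamma_i$ forces $c_{i,1}=c_{i,2}=0$. The main obstacle is reconciling the singular zeroth-order term $-\kappa/\rho^2$ with the weighted target norm: spectral projection sidesteps this by diagonalizing the spatial part into the scalars $\gamma_i^2$, after which the spatial decay $\rho^s$ is carried automatically by the eigenfunctions themselves via Theorem~\ref{thm:weightedEV}, and what remains is only the ODE estimate—whose constant $(\mu^2-\gamma_i^2)^{-1}$ is independent of $t_0$, as required.
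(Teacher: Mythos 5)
Your proposal is correct and follows essentially the same route as the paper: project onto $\operatorname{span}\{\phi_1,\dots,\phi_I\}$, solve each decoupled ODE $L_iv_i=f_i$ with exactly the same one-sided Green's function formula enforcing decay at $+\infty$, and reassemble. The only (harmless) difference is that the paper estimates the H\"older seminorm of $v_i''$ directly from the explicit formula, whereas you obtain the $\Lambda^{2,\alpha}_{\mu,s}$ bound by feeding the $\Lambda^0_{\mu,s}$ estimate and the boundary data into Lemma~\ref{lem:Apriori}, which is a legitimate shortcut.
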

\begin{proof}

Write $C_I=\max_{1\leq i \leq I}C_i$ as in Theorem \ref{thm:weightedEV}. Throughout the proof, write $A \lesssim_I B$ to mean $A \le C_I B$,
where $C_I$ depends only on $n,\alpha,\mu,\Sigma$, and $\max_{1\le i\le I}\lambda_i$,
and is independent of $t_0$.
For each $i=1,2,\cdots,I$, set 
\begin{equation*}
    f_i(t)=\int_\Sigma f(t,\theta)\phi_i(\theta)\,d\theta,
\end{equation*}
 and compute
\begin{equation*}
\begin{aligned}
    |f_i(t)|&\leq \int_\Sigma |f(t,\theta)||\phi_i(\theta)|\,d\theta
    \\
    &\lesssim_I{\lVert f\rVert}_{\Lambda_{\mu,s-2}^{0}([t_0,\infty)\times\Sigma)}e^{-\mu t}\int_\Sigma \rho^{2s-2}(\theta)\,d\theta \lesssim_I e^{-\mu t}{\lVert f\rVert}_{\Lambda_{\mu,s-2}^{0}([t_0,\infty)\times\Sigma)}.
\end{aligned}
\end{equation*}
Hence,
\begin{equation*}
    e^{\mu t}|f_i(t)|\lesssim_I{\lVert f\rVert}_{\Lambda_{\mu,s-2}^{0}([t_0,\infty)\times\Sigma)}.
\end{equation*}
Similarly, for $t\geq t_0+1$ and $t'\in[t-1,t+1]$ such that $t'\neq t$, we have
\begin{equation*}
    e^{\mu t}|f_i(t)-f_i(t')|\leq e^{\mu t}\int_\Sigma |f(t,\theta)-f(t',\theta)||\phi_i(\theta)|\,d\theta,
\end{equation*}
and 
\begin{equation*}
    e^{\mu t}|f(t,\theta)-f(t',\theta)|\leq C{\lVert f\rVert}_{\Lambda_{\mu,s-2}^{0,\alpha}([t_0,\infty)\times\Sigma)}\,\rho(\theta)^{s-2}|t-t'|^\alpha.
\end{equation*}
Then,
\begin{equation*}
\begin{aligned}
  e^{\mu t}|f_i(t)-f_i(t')|&\leq e^{\mu t}\int_\Sigma |f(t,\theta)-f(t',\theta)||\phi_i(\theta)|\,d\theta    
\\
    &\leq C{\lVert f\rVert}_{\Lambda_{\mu,s-2}^{0,\alpha}([t_0,\infty)\times\Sigma)}|t-t'|^\alpha \int_\Sigma \rho(\theta)^{s-2}|\phi_i(\theta)|\,d\theta
\\
  &\lesssim_I {\lVert f\rVert}_{\Lambda_{\mu,s-2}^{0,\alpha}([t_0,\infty)\times\Sigma)}|t-t'|^\alpha \int_\Sigma \rho(\theta)^{2s-2}\,d\theta.
\end{aligned}
\end{equation*}
Thus,
\begin{equation*}
    {\lVert f_i\rVert}_{C_{\mu}^{0,\alpha}([t_0,\infty))}\lesssim_I {\lVert f\rVert}_{\Lambda_{\mu,s-2}^{0,\alpha}([t_0,\infty)\times\Sigma)},
\end{equation*}
and 
\begin{equation*}
    f(t,\theta)=\sum_{i=1}^I f_i(t)\phi_i(\theta).
\end{equation*}
Let $L_i$ be the linear operator as in \eqref{eq:defLi}. We first consider the ordinary differential equation 
\begin{equation}\label{eq:Livi}
    L_iv_i=f_i.
\end{equation}
We claim that there exists a solution $v_i\in C^{2,\alpha}_\mu $  of \eqref{eq:Livi} satisfying
\begin{equation}\label{eq:vifiestimate}
    {\lVert v_i\rVert}_{C_{\mu}^{2,\alpha}([t_0,\infty))}\lesssim_I{\lVert f_i\rVert}_{C_{\mu}^{0,\alpha}([t_0,\infty))}.
\end{equation}
For all $i\geq1$, we have that $\text{Ker}(L_i)$ is spanned by $e^{\gamma_i t}$ and $e^{-\gamma_i t}$. Set
\begin{equation*}
    v_i(t)=\frac{e^{-\gamma_i t}}{2\gamma_i}\int_t^{\infty}e^{\gamma_i s}f_i(s)\,ds-\frac{e^{\gamma_i t}}{2\gamma_i}\int_t^{\infty} e^{-\gamma_i s}f_i(s)\,ds.
\end{equation*}
A simple computation yields, for $t\geq t_0$,
\begin{equation*}
    e^{\mu t}|v_i(t)|\lesssim_I\sup\limits_{t\geq t_0}e^{\mu t}|f_i(t)|\lesssim_I{\lVert f_i\rVert}_{C_{\mu}^{0}([t_0,\infty))}.
\end{equation*}
By a straightforward computation, we have
\begin{equation*}
    v_i'(t)=-\frac{1}{2}\Big[e^{-\gamma_i t}\int_t^\infty e^{\gamma_i s}f_i(s)\,ds + e^{\gamma_i t}\int_t^\infty e^{-\gamma_i s}f_i(s)\,ds \Big],
\end{equation*}
and 
\begin{equation*}
    v''_i(t)=\frac{\gamma_i e^{-\gamma_i t}}{2}\int_t^\infty e^{\gamma_i s}f_i(s)\,ds-\frac{\gamma_i e^{\gamma_i t}}{2}\int_t^\infty e^{-\gamma_i s}f_i(s)\,ds +f_i(t).
\end{equation*}
We obtain, for $t\geq t_0$, 
\begin{equation*}
    e^{\mu t}|v_i'(t)|+e^{\mu t}|v_i''(t)| \lesssim_I {\lVert f_i\rVert}_{C_{\mu}^{0}([t_0,\infty))}.
\end{equation*}
For the Hölder semi-norms of $v_i''$, we write
\begin{equation*}
    v_i''=P+f_i,
\end{equation*}
where
\begin{equation*}
    P(t)=\frac{\gamma_i e^{-\gamma_i t}}{2}\int_t^\infty e^{\gamma_i s}f_i(s)\,ds-\frac{\gamma_i e^{\gamma_i t}}{2}\int_t^\infty e^{-\gamma_i s}f_i(s)\,ds.
\end{equation*}
Then,
\begin{equation*}
    P'(t)=-\frac{\gamma_i^2}{2}\Biggl(e^{-\gamma_i t}\int_t^\infty e^{\gamma_i s}f_i(s)\,ds-\int_t^\infty e^{-\gamma_i s}f_i(s)\,ds\Biggr).
\end{equation*}
Similarly, for $t\geq t_0$, we have
\begin{equation*}
    e^{\mu t}|P'(t)|\lesssim_I {\lVert f_i\rVert}_{C_{\mu}^{0}([t_0,\infty))},
\end{equation*}
and hence, for $t\geq t_0+1$,
\begin{equation*}
    e^{\mu t}[P]_{C^\alpha([t-1,t+1])} \lesssim_I{\lVert f_i\rVert}_{C_{\mu}^{0}([t_0,\infty))}.
\end{equation*}
Therefore, for $t\geq t_0+1$,
\begin{equation*}
    e^{\mu t}[v_i'']_{C^\alpha([t-1,t+1])} \lesssim_I{\lVert f_i\rVert}_{C_{\mu}^{0,\alpha}([t_0,\infty))}.
\end{equation*} Combining these, we have \eqref{eq:vifiestimate}.
With the solution $v_i$ of \eqref{eq:Livi} for $i=1,2,\cdots,I$, we set
\begin{equation*}
    v(t,\theta)=\sum\limits_{i=1}^I v_i(t)\,\phi_i(\theta).
\end{equation*}
Then, $\mathcal{L}v=f$ and, by \eqref{eq:vifiestimate}
\begin{equation*}
{\lVert v\rVert}_{\Lambda_{\mu,s}^{2,\alpha}([t_0,\infty)\times\Sigma)} \lesssim_I\sum\limits_{i=1}^I  {\lVert v_i\rVert}_{C_{\mu}^{2,\alpha}([t_0,\infty))}
\lesssim_I\sum\limits_{i=1}^I  {\lVert f_i\rVert}_{C_{\mu}^{0,\alpha}([t_0,\infty))}\lesssim_I{\lVert f\rVert}_{\Lambda_{\mu,s-2}^{0,\alpha}([t_0,\infty)\times\Sigma)}.
\end{equation*}
Therefore, $v$ is the desired solution. It is easy to see that such a $v$ is unique.
\end{proof}
It is important to note that we only have uniqueness for solutions $w$ such that $w(t,\cdot)\in \operatorname{span}\{\phi_1,\phi_2,\cdots,\phi_I\}$ for $t\geq t_0$. We proceed by constructing solutions in infinite-dimensional subspaces of $L^2(\Sigma)$ with homogeneous boundary conditions.

\begin{lemma}\label{lem:INFINITESUB} Let $\alpha\in(0,1)$, $s>2$, $\mu>\gamma_1$, and $\mu\neq\gamma_i$ for any $i\geq 2$, and $f\in \Lambda^{0,\alpha}_{\mu,s-2}([t_0,\infty)\times\Sigma)$ with $\int_\Sigma f(t,\cdot)\phi_i(\theta)\,d\theta=0$ for any $i$ with $\gamma_i<\mu$ and any $t\geq t_0$. Then, there exists a unique solution $v\in \Lambda^{2,\alpha}_{\mu,s}([t_0,\infty)\times\Sigma)$ of $\mathcal{L}v=f$ with $v=0$ on $\{t_0\}\times \Sigma$. Moreover,
\begin{equation}\label{eq:weaklowersemi}
    {\lVert v\rVert}_{\Lambda_{\mu,s}^{2,\alpha}([t_0,\infty)\times\Sigma)} \leq C{\lVert f\rVert}_{\Lambda_{\mu,s-2}^{0,\alpha}([t_0,\infty)\times\Sigma)}, 
\end{equation}
where $C$ is a positive constant depending only on $n,\alpha,s,\mu,\Sigma$ but independent of $t_0$.
\end{lemma}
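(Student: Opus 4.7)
My plan is to construct $v$ as the limit of variational solutions on finite cylinders $(t_0,T)\times\Sigma$ with homogeneous Dirichlet data on the entire boundary, and then pass to the limit $T\to\infty$ using the uniform weighted estimates furnished by Lemmas~\ref{lem:Linfinity} and~\ref{lem:Apriori}; uniqueness will come from Lemma~\ref{lem:Uniqueness}.

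For each $T>t_0+4$, I would set up the bilinear form associated with $-\mathcal{L}$, namely
\[
B(v,w)=\int_{t_0}^{T}\!\int_\Sigma\Bigl(v_t w_t+\nabla_\theta v\cdot\nabla_\theta w+\tfrac{\kappa}{\rho^2}vw+\beta^2 vw\Bigr)\,d\theta\,dt,
\]
on the Hilbert space obtained by completing $C_c^\infty((t_0,T)\times\Sigma)$ under $B(\cdot,\cdot)^{1/2}$. Hardy's inequality (as used in Lemma~\ref{lem:HARDY}) makes $B$ coercive, and the pairing with $f$ is continuous since $f\in\Lambda^{0,\alpha}_{\mu,s-2}$ with $s>2$ certainly places $\rho f\in L^2$; Lax--Milgram then yields a unique weak solution $v_T$ of $\mathcal{L}v_T=f$ vanishing on the full parabolic boundary, and standard Schauder theory upgrades $v_T$ to $C^{2,\alpha}_{\mathrm{loc}}$. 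A key observation is that the orthogonality of $f$ is automatically inherited by $v_T$: the projection $v_{T,i}(t)=\int_\Sigma v_T\phi_i\,d\theta$ satisfies $L_iv_{T,i}=0$ on $(t_0,T)$ with zero values at both endpoints, forcing $v_{T,i}\equiv 0$ whenever $\gamma_i<\mu$, so the hypothesis of Lemma~\ref{lem:Linfinity} is met.

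Applying Lemma~\ref{lem:Linfinity} to $v_T$ gives
\[
\sup_{[t_0,T]\times\Sigma}\rho^{-s}e^{\mu t}|v_T|\;\le\;C\sup_{[t_0,T]\times\Sigma}\rho^{-s+2}e^{\mu t}|f|\;\le\;C\|f\|_{\Lambda^{0,\alpha}_{\mu,s-2}([t_0,\infty)\times\Sigma)},
\]
with $C$ independent of $T$. Feeding this $L^\infty$ bound into the finite-cylinder version of Lemma~\ref{lem:Apriori}---whose Schauder step is purely local in $t$ and $\theta$ and adapts verbatim---produces a uniform bound on $\|v_T\|_{\Lambda^{2,\alpha}_{\mu,s}([t_0,T]\times\Sigma)}$. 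I would then let $T_k\to\infty$, extract via Arzel\`a--Ascoli a subsequence converging in $C^2_{\mathrm{loc}}$, and identify the limit $v$ as the sought solution. Estimate~\eqref{eq:weaklowersemi} passes to the limit by lower semicontinuity of the weighted norm, and uniqueness follows by applying Lemma~\ref{lem:Uniqueness} to the difference of any two candidate solutions.

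The main obstacle is securing the $T$-independence of the $L^\infty$ estimate. Because of the singular zeroth-order coefficient $-\kappa/\rho^2$, the maximum principle is unavailable and one cannot bound $v_T$ pointwise from $f$ by comparison; this is exactly what the blow-up/rescaling argument behind Lemma~\ref{lem:Linfinity} overcomes, and it is the source of the nonresonance assumption $\mu\notin\{\gamma_i\}_{i\ge 2}$: without it, the limit profile extracted in a blow-up could be a nontrivial eigenmode neither decaying at infinity nor killed by the orthogonality constraint, and the contradiction step would collapse. Once this uniform bound is in hand, the Schauder bootstrap and the $T\to\infty$ limit are routine.
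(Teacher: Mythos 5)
Your proposal is correct and follows essentially the same route as the paper: variational existence on finite cylinders $(t_0,T)\times\Sigma$ with homogeneous Dirichlet data, the uniform weighted $L^\infty$ bound from Lemma~\ref{lem:Linfinity}, the a priori Schauder estimate of Lemma~\ref{lem:Apriori}, a diagonal limit $T\to\infty$, and uniqueness from Lemma~\ref{lem:Uniqueness}. The only cosmetic difference is that the paper enforces the spectral constraint by minimizing the energy over the subspace of functions valued in $\mathcal{X}$, whereas you recover it a posteriori from the projected ODEs $L_i v_{T,i}=0$ with zero two-point data; both are valid.
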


\begin{proof} Take any $T\geq t_0+4$. We first prove there exists a solution $v_T\in C^{2,\alpha}([t_0,T]\times\Sigma)$ of
\begin{equation}\label{eq:LvT}
\begin{aligned}
\mathcal{L}v_T&=f\quad\text{in }(t_0,T)\times\Sigma,
\\
v_T&=0\quad\text{on }((\{t_0\}\cup\{T\})\times\Sigma)\cup  ((t_0,T)\times\partial\Sigma).
\end{aligned}
\end{equation}
Consider the energy functional
\begin{equation*}
    \mathcal{E}_T(v)=\int_{t_0}^T \int_\Sigma \Big[(\partial_t v)^2+|\nabla_\theta v|^2 + \frac{(n-2)^2}{4}v^2+\frac{n(n+2)}{4}\rho^{-2}v^2+2fv \Big]\,dt\,d\theta.
\end{equation*}
Set
\begin{equation*}
    \mathcal{X} =\{u \in H^1(\Sigma); \int_\Sigma u(\theta)\phi_i(\theta)\,d\theta=0\quad \text{for any } i\text{ with }\gamma_i<\mu \}.
\end{equation*}
Then, for any $u\in\mathcal{X}$,
\begin{equation*}
    \int_\Sigma |\nabla_\theta u|^2 +\frac{n(n+2)}{4}\rho^{-2}u^2\,d\theta \geq \lambda_1 \int_\Sigma u^2\, d\theta.
\end{equation*}
Hence, for any $v\in H^1_0((t_0,T)\times\Sigma)$ with $v(t,\cdot)\in\mathcal{X}$ for any $t\in(t_0,T)$, we have
\begin{equation*}
    \mathcal{E}_T(v)\geq\int_{t_0}^T \int_\Sigma \Big[(\partial_t v)^2 + \big(\beta^2+\lambda_1\big)v^2+2fv \Big]\,dt\,d\theta.
\end{equation*}
We conclude that $\mathcal{E}_T$ is coercive and weakly lower semi-continuous. Hence, we can find a minimizer $v_T$ of $\mathcal{E}_T$ in the space 
\begin{equation*}
    \{ v\in H^1_0((t_0,T)\times\Sigma);\,v(t,\cdot)\in\mathcal{X} \ \text{for any }t\in(t_0,T) \}.
\end{equation*}
Since $f(t,\cdot)\in\mathcal{X}$ for any $t\in(t_0,T)$, $v_T$ is a solution of \eqref{eq:LvT}, with $v_T(t,\cdot)\in\mathcal{X}$ for any $t\in (t_0,T).$
\par\indent By Lemma \ref{lem:Linfinity}, we have 
\begin{equation*}
    \sup\limits_{(t,\theta)\in[t_0,T]\times\Sigma}\rho^{-s}(\theta) e^{\mu t}|v_T(t,\theta)|\leq C\sup\limits_{(t,\theta)\in[t_0,T]\times\Sigma} \rho^{-s+2}(\theta) e^{\mu t}|f(t,\theta)|,
\end{equation*}
where $C$ is a positive constant depending only on $n,\alpha,\mu$, and $\Sigma$, independent of $t_0$. For each fixed $T_0>t_0$, consider $[t_0,T_0]\times\Sigma \subset [t_0,T_0+1]\times\Sigma$. By the interior and boundary Schauder estimates, $v_T(t_0,\theta)=0$, and passing to a subsequence, $v_T$ converges to a $C^{2,\alpha}$ solution of $\mathcal{L}v=f$ in $[t_0,T_0]\times\Sigma$ with $v=0$ on $\{t_0\}\times\Sigma$, as $T\to\infty$. By a diagonalization process, $v_T$ converges to a $C^{2,\alpha}$ solution $v$ of $\mathcal{L}v=f$ in $[t_0,\infty)\times\Sigma$, with $v=0$ on $\{t_0\}\times\Sigma$. Moreover,
\begin{equation*}
\sup\limits_{(t,\theta)\in[t_0,\infty)\times\Sigma}\rho^{-s} e^{\mu t}|v(t,\theta)|\leq C\sup\limits_{(t,\theta)\in[t_0,\infty)\times\Sigma} \rho^{-s+2} e^{\mu t}|f(t,\theta)|,
\end{equation*}
or
\begin{equation}\label{eq:SUBEST}
{\lVert v\rVert}_{\Lambda_{\mu,s}^{0}([t_0,\infty)\times\Sigma)} \leq C{\lVert f\rVert}_{\Lambda_{\mu,s-2}^{0,\alpha}([t_0,\infty)\times\Sigma)}, 
\end{equation}
where  $C$ is a positive constant depending only on $n,\alpha,\mu$, and $\Sigma$, and is independent of $t_0$. By substituting \eqref{eq:SUBEST} in \eqref{eq:Apriori} with $\varphi=0$, we have \eqref{eq:weaklowersemi}.
\end{proof}

Finally, we establish the invertibility of $\mathcal{L}$.
\begin{theorem}  
Let $\alpha\in(0,1)$, $s>2$, $\mu>\gamma_1$ and $\mu\neq\gamma_i$ for any $i$, and $f\in \Lambda^{0,\alpha}_{\mu,s-2}([t_0,\infty)\times\Sigma)$. Then $\mathcal{L}v=f$ admits a solution $v\in\Lambda^{2,\alpha}_{\mu,s}([t_0,\infty)\times\Sigma)$ and 
\begin{equation}
{\lVert v\rVert}_{\Lambda_{\mu,s}^{2,\alpha}([t_0,\infty)\times\Sigma)} \leq C{\lVert f\rVert}_{\Lambda_{\mu,s-2}^{0,\alpha}([t_0,\infty)\times\Sigma)},
\end{equation}
where $C$ is a positive constant depending only on $n,\alpha,\mu$, and $\Sigma$, and is independent of $t_0$. Moreover, the correspondence $f\mapsto v$ is linear.
\end{theorem}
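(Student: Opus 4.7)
The plan is to decompose the right-hand side $f$ into a piece whose angular profile lies in a finite-dimensional eigenspace and an orthogonal remainder, solve each piece with the two preceding lemmas, and add the solutions. Since $\mu>\gamma_1$, $\mu\neq\gamma_i$ for any $i$, and $\gamma_i\to\infty$, there is a unique largest integer $I\ge 1$ with $\gamma_I<\mu<\gamma_{I+1}$. For $1\le i\le I$ and $t\ge t_0$, set
\begin{equation*}
    f_i(t)=\int_\Sigma f(t,\theta)\phi_i(\theta)\,d\theta,
    \qquad
    f_1(t,\theta)=\sum_{i=1}^I f_i(t)\phi_i(\theta),
    \qquad
    f_2=f-f_1.
\end{equation*}
By orthonormality of $\{\phi_i\}$, the remainder $f_2$ satisfies $\int_\Sigma f_2(t,\cdot)\phi_i\,d\theta=0$ for every $i\le I$, i.e.\ for every $i$ with $\gamma_i<\mu$.

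The key auxiliary step is to verify the bounds
\begin{equation*}
    \|f_1\|_{\Lambda^{0,\alpha}_{\mu,s-2}([t_0,\infty)\times\Sigma)}
    +\|f_2\|_{\Lambda^{0,\alpha}_{\mu,s-2}([t_0,\infty)\times\Sigma)}
    \le C\|f\|_{\Lambda^{0,\alpha}_{\mu,s-2}([t_0,\infty)\times\Sigma)}.
\end{equation*}
The bound on $f_1$ reuses the computation already carried out in the proof of Lemma~\ref{lem:FINITESUB}: pairing $f$ against $\phi_i$ and using the decay $|\phi_i|\le C_i\rho^{\,s}$ from Theorem~\ref{thm:weightedEV} yields $\|f_i\|_{C^{0,\alpha}_\mu([t_0,\infty))}\lesssim_I \|f\|_{\Lambda^{0,\alpha}_{\mu,s-2}}$; combined with weighted Hölder bounds on $\phi_i$ itself (obtained via the standard rescaling argument on the degenerate cylinders $Q_j(Y',N)$, together with the a priori pointwise/gradient bounds from Theorem~\ref{thm:weightedEV}), the product rule in $\Lambda^{0,\alpha}_{\mu,s-2}$ yields the stated estimate on $f_1$. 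The estimate on $f_2$ then follows by the triangle inequality.

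Once the decomposition is in place, apply Lemma~\ref{lem:FINITESUB} to $f_1$ to obtain a unique $v_1\in \Lambda^{2,\alpha}_{\mu,s}([t_0,\infty)\times\Sigma)$, with $v_1(t,\cdot)\in\mathrm{span}\{\phi_1,\dots,\phi_I\}$, solving $\mathcal{L}v_1=f_1$ and satisfying the bound of \eqref{eq:FINITESUBEST}. Apply Lemma~\ref{lem:INFINITESUB} to $f_2$ (whose orthogonality hypothesis is exactly what we arranged, and for which the nonresonance $\mu\neq\gamma_i$ is given) to obtain a unique $v_2\in \Lambda^{2,\alpha}_{\mu,s}([t_0,\infty)\times\Sigma)$ with $v_2=0$ on $\{t_0\}\times\Sigma$, solving $\mathcal{L}v_2=f_2$ and satisfying \eqref{eq:weaklowersemi}. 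Set $v=v_1+v_2$; then $\mathcal{L}v=f$, and the desired norm estimate follows from the triangle inequality and the two lemmas. Linearity of $f\mapsto v$ is immediate from linearity of the two component solution operators and the linearity of the projection $f\mapsto f_1$.

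The main obstacle I expect is the bookkeeping for the weighted Hölder bound on the projection $f_1$ in the degenerate collar: one must ensure that the weight $\rho^{-(s-2)}$ in the seminorm is absorbed correctly by the fast decay $\rho^{\,s}$ of $\phi_i$, and that the Hölder seminorm on a cylinder $Q_j(T,Y',N)$ is controlled after rescaling. This is essentially a routine computation using the estimates from Theorem~\ref{thm:weightedEV} and the rescaling technique already used in Lemma~\ref{lem:Linfinity}, so no genuinely new ideas are required beyond what appears in Lemmas~\ref{lem:FINITESUB} and~\ref{lem:INFINITESUB}.
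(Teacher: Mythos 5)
Your proposal is correct and follows essentially the same route as the paper: project $f$ onto $\mathrm{span}\{\phi_1,\dots,\phi_I\}$ with $I$ the largest index satisfying $\gamma_I<\mu$, solve the finite-dimensional piece with Lemma~\ref{lem:FINITESUB} and the orthogonal remainder with Lemma~\ref{lem:INFINITESUB}, and add. The only difference is that you spell out the verification that $f_1$ and $f_2$ individually remain controlled in $\Lambda^{0,\alpha}_{\mu,s-2}$, a step the paper leaves implicit; this is a welcome addition rather than a deviation.
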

\begin{proof}
    Take $I$ to be the largest integer such that $\gamma_I<\mu$. Set, for $i=1,2,\cdots,I$,
    \begin{equation*}
        f_i(t)=\int_\Sigma f(t,\theta)\phi_i(\theta)\,d\theta.
    \end{equation*}
    First, let $v_1\in\Lambda^{2,\alpha}_{\mu,s}([t_0,\infty)\times\Sigma)$ be the unique solution of 
    \begin{equation*}
        \mathcal{L}v_1=\sum_{i=1}^If_i\phi_i \quad\text{in }[t_0,\infty)\times\Sigma,
    \end{equation*}
    as in Lemma \ref{lem:FINITESUB}. By Lemma \ref{lem:INFINITESUB}, let $v_2\in \Lambda^{2,\alpha}_{\mu,s}([t_0,\infty)\times\Sigma)$ be the unique solution of 
    \begin{equation*}
    \begin{aligned}
        \mathcal{L}v_2&=f-\sum_{i=1}^If_i\phi_i \quad \text{in } [t_0,\infty)\times\Sigma,
        \\
        v_2&=0 \quad \text{on } \{t_0\}\times\Sigma.
        \end{aligned}
   \end{equation*}
    Then, $v=v_1+v_2$ is the desired solution.
\end{proof}
\section{Convergence of Approximate Solutions via Contraction}\label{sec-APPROX}
In this section, we define approximate solutions and prove Theorem 1.1 using the contraction mapping theorem.
Recall
\begin{equation}\tag{$\star$}
  v_{tt}
  + \Delta_\theta v
  - \tfrac14\,(n-2)^2\,v
  - \tfrac14\,n(n-2)\,v^{\frac{n+2}{n-2}}
  = 0.
\end{equation}
This is simply \eqref{eq:CYLINDRICALLN}, expressed in terms of $v$. Throughout, set
\begin{equation}\label{eq:NV}
\mathcal{N}(v)=v_{tt}+\Delta_\theta v-\frac{1}{4}(n-2)^2v-\frac{1}{4}n(n-2)v^{\frac{n+2}{n-2}}.
\end{equation}
Then, $v$ is a solution of $(\star)$ if $\mathcal{N}(v)=0$.
We proceed to the main theorem of this section.
\begin{theorem}\label{thm:CONTRACTION} 
Let $\xi$ be the positive solution of \eqref{eq:xi}-\eqref{eq:xiB}, $\mathcal{I}$ the index set associated with $\xi$, and $\mu>\gamma_1$ with $\mu\notin\mathcal{I}$. Suppose that $\hat{v}\in C^{2,\alpha}([0,\infty)\times\Sigma)$ satisfies 
\begin{equation}\label{eq:ASS1}
    |\hat{v}-\xi|+\rho|\nabla(\hat{v}-\xi)|\leq \rho^s \epsilon(t),
\end{equation}
where $\epsilon(t)$ is a decreasing function such that $\epsilon(t) \to 0$ as $t\to\infty$, and, for any $(t,\theta)\in[0,\infty)\times\Sigma$,
\begin{equation}\label{eq:ASS2}
    |\mathcal{N}(\hat{v})(t,\theta)|+\rho(\theta)|\nabla(\mathcal{N}(\hat{v}))(t,\theta)|\leq K\rho(\theta)^{s-2}e^{-\mu t},
\end{equation}
for some positive constant $K$. Then, there exists a $t_0>0$ and a solution $v\in C^{2,\alpha}([t_0,\infty)\times\Sigma)$ of $(\star)$ such that, for any $(t,\theta)\in(t_0,\infty)\times\Sigma$,
\begin{equation*}
    |v(t,\theta)-\hat{v}(t,\theta)|\leq C\rho(\theta)^{s}e^{-\mu t},
\end{equation*}
where $C$ is a positive constant.
\end{theorem}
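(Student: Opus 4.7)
The plan is to recast $(\star)$ as a fixed-point equation for the correction $w=v-\hat v$ and apply the contraction mapping theorem in a closed ball of the weighted Hölder space $\Lambda^{2,\alpha}_{\mu,s}([t_0,\infty)\times\Sigma)$, with $t_0$ chosen sufficiently large. Writing $\hat v=\xi+\hat w$ and substituting $v=\xi+\hat w+w$ into $(\star)$ yields
\[
\mathcal{L}w \;=\; -\mathcal{N}(\hat v)+R(w),\qquad R(w):=G(\hat w+w)-G(\hat w),
\]
where $\mathcal{L}$ is the linearization of $\mathcal{N}$ at $\xi$, which (using $\xi^{4/(n-2)}=\rho^{-2}$) coincides with the operator of \eqref{eq:ELLv}, and
\[
G(\zeta):=\tfrac{n(n-2)}{4}\Bigl[(\xi+\zeta)^{\frac{n+2}{n-2}}-\xi^{\frac{n+2}{n-2}}-\tfrac{n+2}{n-2}\xi^{\frac{4}{n-2}}\zeta\Bigr]
\]
collects the higher-order nonlinear remainder. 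The identity $\mathcal{N}(\hat v)=\mathcal{L}\hat w - G(\hat w)$ has been used to absorb the $\hat v$-dependent terms into the inhomogeneous right-hand side. Assumption \eqref{eq:ASS1} becomes $|\hat w|+\rho|\nabla\hat w|\le\rho^s\epsilon(t)$, which supplies the smallness of $\hat w$.

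Next I would invert $\mathcal{L}$. Since $\mu\notin\mathcal{I}$ implies $\mu\ne\gamma_i$ for every $i$, the invertibility theorem at the end of Section~\ref{sec-LYE} furnishes a bounded right inverse
\[
\mathcal{L}^{-1}\colon\Lambda^{0,\alpha}_{\mu,s-2}([t_0,\infty)\times\Sigma)\longrightarrow\Lambda^{2,\alpha}_{\mu,s}([t_0,\infty)\times\Sigma)
\]
with operator norm independent of $t_0$. Assumption \eqref{eq:ASS2}, together with its gradient counterpart and Hölder interpolation on each degenerate cylinder $Q_i$ (whose diameter is comparable to $\rho$), yields $\|\mathcal{N}(\hat v)\|_{\Lambda^{0,\alpha}_{\mu,s-2}}\le CK$. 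Set
\[
T(w):=\mathcal{L}^{-1}\bigl(-\mathcal{N}(\hat v)+R(w)\bigr),
\]
so that any fixed point of $T$ in $\Lambda^{2,\alpha}_{\mu,s}([t_0,\infty)\times\Sigma)$ produces a solution $v=\hat v+w$ of $(\star)$ on $[t_0,\infty)\times\Sigma$ satisfying $|w|\le C\rho^s e^{-\mu t}$, as required.

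The key technical step is the nonlinear estimate that turns $T$ into a contraction. A pointwise Taylor expansion of $G$ gives
\[
|G(\zeta)|\lesssim \xi^{(6-n)/(n-2)}\zeta^2\sim\rho^{(n-6)/2}\zeta^2,\qquad |G'(\zeta)|\lesssim \rho^{(n-6)/2}|\zeta|,
\]
for $|\zeta|$ small relative to $\xi$, with analogous bounds for one further derivative. For $w,w'$ in a ball of radius $M$ in $\Lambda^{2,\alpha}_{\mu,s}$, the mean value theorem applied pointwise to $R$ produces
\[
|R(w)-R(w')|\lesssim \rho^{\frac{n-6}{2}}\bigl(|\hat w|+|w|+|w'|\bigr)|w-w'|,
\]
and inserting $|\hat w|\le\rho^s\epsilon(t)$ and $|w|,|w'|\lesssim M\rho^s e^{-\mu t}$ exploits the crucial arithmetic
\[
\tfrac{n-6}{2}+2s-(s-2)=s+\tfrac{n-2}{2}=n>0,
\]
so that the singular factor $\rho^{(n-6)/2}$ is more than absorbed by the combined $\rho^{2s}$ weight of the three factors. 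Combined with the parallel bounds on derivatives and on the Hölder seminorm over each $Q_i$, this yields
\[
\|R(w)-R(w')\|_{\Lambda^{0,\alpha}_{\mu,s-2}}\le\eta(t_0,M)\,\|w-w'\|_{\Lambda^{2,\alpha}_{\mu,s}},
\]
with $\eta(t_0,M)\to 0$ as $t_0\to\infty$ for fixed $M$, using $\epsilon(t_0)+Me^{-\mu t_0}\to 0$.

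Combining these, I would first fix $M$ as a suitable multiple of $K$ so that $T(0)\in B_{M/2}$, and then choose $t_0$ large enough that $\eta(t_0,M)\le\tfrac12$. Then $T$ contracts the closed ball $\overline{B_M}\subset\Lambda^{2,\alpha}_{\mu,s}([t_0,\infty)\times\Sigma)$ into itself, and the unique fixed point delivers the desired $v$. The principal obstacle I anticipate is the bookkeeping for the Hölder-seminorm portion of the $R$-estimate inside the degenerate cylinders: the anisotropic weights $N^{-s+\alpha}$ must be tracked against difference quotients of the Taylor remainders, which requires exploiting the derivative bounds on $\hat w$ and $w$ provided by the $\Lambda^{2,\alpha}_{\mu,s}$ norm rather than pointwise values alone, and it is here that both the hypothesis $s=(n+2)/2>2$ (giving the gain of $\rho^n$ over $\rho^{s-2}$) and the decay $\epsilon(t)\to 0$ from \eqref{eq:ASS1} are used decisively.
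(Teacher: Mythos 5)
Your proposal is correct and follows essentially the same route as the paper: the same contraction-mapping scheme in $\Lambda^{2,\alpha}_{\mu,s}([t_0,\infty)\times\Sigma)$ using the invertibility theorem for $\mathcal{L}$, with your remainder $R(w)=G(\hat w+w)-G(\hat w)$ algebraically identical to the paper's $P(w)=wQ(w)$, and the same smallness mechanism $\epsilon(t_0)+Be^{-\mu t_0}\to 0$ driving both the self-mapping and the contraction estimates. The weight bookkeeping you flag (including the identity $\tfrac{n-6}{2}+2s-(s-2)=n$ and the control of the Hölder seminorm through the weighted gradient, i.e.\ the $\Lambda^{1}_{\mu,s-2}$ norm) matches how the paper handles it.
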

\begin{proof}
    Given approximate solution $\hat{v}$, we will find a $w\in \Lambda_{\mu,s}^{2,\alpha}([t_0,\infty)\times\Sigma)$ such that 
    \begin{equation*}
        \mathcal{N}(\hat{v}+w)=0. 
    \end{equation*}
    
    {\it Step 1.} Write
   \begin{equation*}
        0=\mathcal{N}(\hat{v}+w)=\mathcal{N}(\hat{v})+\mathcal{L}w-P(w),
    \end{equation*}
    where
    \begin{equation*}
        P(w)=\frac{1}{4}n(n-2)[(\hat{v}+w)^{\frac{n+2}{n-2}}-\hat{v}^{\frac{n+2}{n-2}}]-\frac{1}{4}n(n+2)\frac{w}{\rho^2}.
    \end{equation*}
    Then, 
    \begin{equation*}
        \mathcal{L}w=P(w)-\mathcal{N}(\hat{v}),
    \end{equation*}
    and with the operator $\mathcal{L}^{-1}$, we can rewrite it further as 
    \begin{equation*}
        w=\mathcal{L}^{-1}[P(w)-\mathcal{N}(\hat{v})].
    \end{equation*}
    We define the mapping $\mathcal{T}$ by
    \begin{equation*}
        \mathcal{T}(w)=\mathcal{L}^{-1}[P(w)-\mathcal{N}(\hat{v})].
    \end{equation*}
    We will prove that $\mathcal{T}$ is a contraction on some ball in $\Lambda_{\mu,s}^{2,\alpha}([t_0,\infty)\times\Sigma)$, for some $t_0$ large.
    For convenience, set
    \begin{equation*}
        \mathcal{X}_{B,t_0}=\{w\in\Lambda_{\mu,s}^{2,\alpha}([t_0,\infty)\times\Sigma); {\lVert w\rVert}_{\Lambda_{\mu,s}^{2,\alpha}([t_0,\infty)\times\Sigma)}\leq B\}.
    \end{equation*}
    
    {\it Step 2.} We show $\mathcal{T}$ maps $\mathcal{X}_{B,t_0}$ to itself, for some fixed $B$ and any $t_0$ sufficiently large; namely, for any $w\in\Lambda_{\mu,s}^{2,\alpha}([t_0,\infty)\times\Sigma)$ with ${\lVert w\rVert}_{\Lambda_{\mu,s}^{2,\alpha}([t_0,\infty)\times\Sigma)}\leq B$, we have $\mathcal{T}(w)\in\Lambda_{\mu,s}^{2,\alpha}([t_0,\infty)\times\Sigma)$ and ${\lVert \mathcal{T}(w)\rVert}_{\Lambda_{\mu,s}^{2,\alpha}([t_0,\infty)\times\Sigma)}\leq B$. 
    Note that by \eqref{eq:ASS2}, we have 
    \begin{equation*}
        {\lVert \mathcal{N}(\hat{v})\rVert}_{\Lambda_{\mu,s-2}^{1}([t_0,\infty)\times\Sigma)}\leq K.
    \end{equation*}
    Next, set 
    \begin{equation}\label{eq:Q}
        Q(w)=\frac{1}{4}n(n+2)\int^1_0 \Big[(\hat{v}+\tau w)^{\frac{4}{n-2}}-\xi^{\frac{4}{n-2}}\Big]\,d\tau.
    \end{equation}
    Then, $P(w)=wQ(w)$. By \eqref{eq:ASS1} and writing $s-\beta=2$ and $\xi=\rho^{-\beta}$, we have
    \begin{equation*}
        1-\epsilon(t)\rho^2 \leq\frac{\hat{v}}{\xi}\leq 1+\epsilon(t)\rho^2,
    \end{equation*}
    and
    \begin{equation*}
        \frac{|w|}{\xi}\leq  B e^{-\mu t}\rho^{2\beta+2}.
    \end{equation*}
    Then, for some fixed $\delta\in(0,\frac{1}{2}]$, we can choose a sufficiently large $t_0>0$ so that, for all $t\geq t_0$ and $\tau\in (0,1),$
    \begin{equation*}
        \Big|\frac{\hat{v}+\tau w}{\xi}-1\Big|\leq (\epsilon(t)+Be^{-\mu t})\rho^2<\delta. 
    \end{equation*}
    Writing $f(t)=t^{\frac{4}{n-2}},\,f'(t)=\frac{4}{n-2}t^{\frac{6-n}{n-2}},\, f''(t)=\frac{4(6-n)}{(n-2)^2}t^{\frac{8-2n}{n-2}}$, and $\rho=\xi^{-\frac{2}{n-2}}$, we have
\begin{equation}
\begin{aligned}
 \rho^2|Q(w)|&\leq \frac{1}{4}n(n+2)\int^1_0 \Big|\Big(\frac{\hat{v}+\tau w}{\xi}\Big)^{\frac{4}{n-2}}-1\Big|\, d\tau=\frac{1}{4}n(n+2)\int^1_0 \Big|f\Big(\frac{\hat{v}+\tau w}{\xi}\Big)-f(1)\Big|\, d\tau    
\\
    &\leq \frac{1}{4}n(n+2)\|f'\|_{L^\infty([1-\delta,1+\delta])}\int_0^1 \Big|\frac{\hat{v}+\tau w}{\xi}-1\Big|\, d\tau \leq C(\epsilon(t)+Be^{-\mu t})\rho^2.
\end{aligned}
\end{equation}
    Thus,
    \begin{equation*}
        |Q(w)|\leq \epsilon(t)+Be^{-\mu t}.
    \end{equation*}
    Next,
\begin{equation}\label{eq:equation1}
\begin{aligned}
\rho^3\big|\nabla Q(w)\big|&\leq\frac{1}{4}n(n+2)\rho^3\int^1_0 \big|\nabla f(\hat{v}+\tau w)-\nabla f(\xi)\big|\, d\tau    
\\
&=\rho^3\int^1_0 \big|f'(\hat{v}+\tau w)\tau \nabla w+f'(\hat{v}+\tau w)\nabla \hat{v}-f'(\xi)\nabla\xi\big| \, d\tau
\\
&=\rho^3\int^1_0 \Big|f'(\hat{v}+\tau w)\tau\nabla w+f'(\hat{v}+\tau w)\nabla \hat{v}
\\
&\quadd\quad\quad+\big[f'(\hat{v}+\tau w)\nabla\xi-f'(\hat{v}+\tau w)\nabla\xi\big] -f'(\xi)\nabla\xi\Big| \, d\tau
\\
&\leq\int^1_0 \Big| \rho^3f'(\hat{v}+\tau w)\big[\nabla(\hat{v}-\xi)+\tau\nabla w\big]\Big|\, d\tau+\int_0^1\Big|\rho^3\big[f'(\hat{v}+\tau w)-f'(\xi)\big]\nabla\xi\Big|\, d\tau.
\end{aligned}
\end{equation}
Using $\rho^{\frac{n}{2}}\xi^{\frac{n-6}{n-2}}=\rho^3$, we have
    \begin{equation*}
        \rho^3f'(\hat{v}+\tau w)=\rho^{\frac{n}{2}}\xi^{\frac{n-6}{n-2}}\frac{4}{n-2}\Big(\hat{v}+\tau w\Big)^{\frac{6-n}{n-2}}=\rho^{s-1}f'\Big(\frac{\hat{v}+\tau w}{\xi}\Big),
    \end{equation*}
    and similarly,
    \begin{equation*}
        \rho^3f'(\xi)=\rho^{\frac{n}{2}}\xi^{\frac{n-6}{n-2}}\frac{4}{n-2}\xi^{\frac{6-n}{n-2}}=\frac{4}{n-2}\rho^{\frac{n}{2}}=\rho^{s-1}f'(1).
    \end{equation*}
    Then, estimating the final term in \eqref{eq:equation1} and using $s-1=\frac{n}{2}$, we have
    \begin{equation}\label{eq:equation2}
    \begin{aligned}
        \int^1_0 \Big|\rho^3f'(\hat{v}+\tau w)\big[\nabla(\hat{v}-\xi)+\tau\nabla w\big]\Big|\, d\tau &= \rho^{\frac{n}{2}}\int^1_0 \Big|f'\Big(\frac{\hat{v}+\tau w}{\xi}\Big)\big[\nabla(\hat{v}-\xi)+\tau\nabla w\big]\Big|\,d\tau
        \\
        &\leq\rho^{\frac{n}{2}}\|f'\|_{L^\infty([1-\delta,1+\delta])}(\epsilon(t)+Be^{-\mu t})\rho^{s-1}
        \\
        &=\|f'\|_{L^\infty([1-\delta,1+\delta])}(\epsilon(t)+Be^{-\mu t})\rho^n.
    \end{aligned}
    \end{equation}
    Similarly, by Lemma \ref{lem:gradxi}, we get
    \begin{equation*}
        \big|\nabla\xi\big|\leq C\rho^{-s+1}\quad\text{on } \Sigma.
    \end{equation*}
Estimating the penultimate term in \eqref{eq:equation1}, we get
\begin{equation}\label{eq:equation3}
\begin{aligned}
  \int_0^1\Big|\rho^3\big[f'(\hat{v}+\tau w)-f'(\xi)\big]\nabla\xi\Big|\, d\tau&=\int_0^1\Big|\rho^{s-1}\big[f'\Big(\frac{\hat{v}+\tau w}{\xi}\Big)-f'(1)\big]\nabla\xi\Big|\, d\tau    
\\
    &\leq \rho^{s-1}\|f''\|_{L^\infty([1-\delta,1+\delta])}\big|\nabla\xi\big|\int_0^1\Big|\frac{\hat{v}+\tau w}{\xi}-1\Big| \, d\tau
\\
  &\leq C\|f''\|_{L^\infty([1-\delta,1+\delta])}\int_0^1\Big|\frac{\hat{v}+\tau w}{\xi}-1\Big|\, d\tau
\\
  &\leq C\|f''\|_{L^\infty([1-\delta,1+\delta])}(\epsilon(t)+Be^{-\mu t})\rho^2.
\end{aligned}
\end{equation}
Combining \eqref{eq:equation1}, \eqref{eq:equation2}, and \eqref{eq:equation3}, we obtain
    \begin{equation*}
        \rho\big|\nabla Q(w)\big|\leq C(\epsilon(t)+Be^{-\mu t}),
    \end{equation*}
    and thus,
    \begin{equation*}
        |Q(w)|+\rho|\nabla Q(w)|\leq C'(\epsilon(t)+Be^{-\mu t}),
    \end{equation*}
    where $C$ is independent of $t_0$. In particular,
    \begin{equation*}
        Q(w)\in \Lambda^1_{0,0}([t_0,\infty)\times\Sigma), 
    \end{equation*}
    with
    \begin{equation*}
        {\lVert Q(w)\rVert}_{\Lambda_{0,0}^{1}([t_0,\infty)\times\Sigma)}\leq C'(\epsilon(t_0)+Be^{-\mu t_0}).
    \end{equation*}
    By Theorem 2.9, we have
\begin{equation}
\begin{aligned}
    {\lVert \mathcal{T}(w)\rVert}_{\Lambda_{\mu,s}^{2,\alpha}([t_0,\infty)\times\Sigma)}&\leq C {\lVert P(w)-\mathcal{N}(\hat{v})\rVert}_{\Lambda_{\mu,s-2}^{0,\alpha}([t_0,\infty)\times\Sigma)}
\\
    &\leq C \{{\lVert P(w)\rVert}_{\Lambda_{\mu,s-2}^{1}([t_0,\infty)\times\Sigma)}+{\lVert \mathcal{N}(\hat{v})\rVert}_{\Lambda_{\mu,s-2}^{1}([t_0,\infty)\times\Sigma)}\}
\\
    &\leq C {\lVert w\rVert}_{\Lambda_{\mu,s}^{1}([t_0,\infty)\times\Sigma)}{\lVert Q(w)\rVert}_{\Lambda_{0,0}^{1}([t_0,\infty)\times\Sigma)}+CK
\\
    &\leq C \{C'(\epsilon(t_0)+Be^{-\mu t_0})B+K\},
\end{aligned}
\end{equation}
where $C$ is independent of $t_0$. We first take $B\geq 2CK$, and then $t_0$ large so that 
\begin{equation*}
CC'(\epsilon(t_0)+Be^{-\mu t_0})\leq 1/2.
\end{equation*}
Then,
\begin{equation*}
        {\lVert \mathcal{T}(w)\rVert}_{\Lambda_{\mu,s}^{2,\alpha}([t_0,\infty)\times\Sigma)} \leq B.
\end{equation*}
This is the desired estimate.
\par\indent{\it Step 3.} We prove that $\mathcal{T}:\mathcal{X}_{B,t_0}\to \mathcal{X}_{B,t_0}$ is a contraction, i.e., for any $w_1,w_2\in \mathcal{X}_{B,t_0}$,
\begin{equation*}
    {\lVert \mathcal{T}(w_1)-\mathcal{T}(w_2)\rVert}_{\Lambda_{\mu,s}^{2,\alpha}([t_0,\infty)\times\Sigma)}\leq \lambda{\lVert w_1-w_2\rVert}_{\Lambda_{\mu,s}^{2,\alpha}([t_0,\infty)\times\Sigma)},
\end{equation*}
for some constant $\lambda\in(0,1)$. We note
\begin{equation*}
        \mathcal{T}(w_1)-\mathcal{T}(w_2)=\mathcal{L}^{-1}(P(w_1)-P(w_2)),
\end{equation*}
and
\begin{equation*}
        P(w_1)-P(w_2)=w_1Q(w_1)-w_2Q(w_2)=(w_1-w_2)Q(w_1)+w_2(Q(w_1)-Q(w_2)).
\end{equation*}
By \eqref{eq:Q} and using $\rho^2=\xi^{-\frac{4}{n-2}}$, we have
\begin{equation*}
\begin{aligned}
    \rho^2\big|Q(w_1)-Q(w_2)\big|&\leq\frac{1}{4}n(n+2)\int^1_0 \rho^2\Big|(\hat{v}+\tau w_1)^{\frac{4}{n-2}}-(\hat{v}+\tau w_2)^{\frac{4}{n-2}}\Big|\,d\tau 
\\
    &=\frac{1}{4}n(n+2)\int^1_0 \Big| f\Big(\frac{\hat{v}+\tau w_1}{\xi}\Big)-f\Big(\frac{\hat{v}+\tau w_2}{\xi}\Big)\Big|\,d\tau
\\
    &\leq \frac{1}{4}n(n+2)\|f'\|_{L^\infty([1-\delta,1+\delta])}\int^1_0 \tau\big|\frac{w_1-w_2}{\xi}\big| \,d\tau
\\
    &\leq C\rho^{s-2}|w_1-w_2|.
\end{aligned}
\end{equation*}
Arguing similarly as above, we get
\begin{equation*}
\begin{aligned}
    \Big|\nabla(Q(w_1)-Q(w_2))\Big|
    &\leq \frac{1}{4}n(n+2)\int_0^1
        \Big|\nabla f(\hat{v}+\tau w_1)-\nabla f(\hat{v}+\tau w_2)\Big|\, d\tau
\\
    &=\int_0^1 \Big| f'(\hat{v}+\tau w_1)\Big|\tau\big[\nabla w_1-\nabla w_2]
    \\
    &\quadd\quadd + \big[f'(\hat{v}+\tau w_1)-f'(\hat{v}+\tau w_2)\big]\big(\nabla\hat{v}+\tau\nabla w_2\big) \, d\tau
\\
    &\leq \int_0^1  \big| f'(\hat{v}+\tau w_1)\big|\big|\nabla w_1-\nabla w_2 \big|\, d\tau 
    \\
    &\quadd\quadd+\int_0^1 \big|f'(\hat{v}+\tau w_1)-f'(\hat{v}+\tau w_2)\big|\big|\nabla\hat{v}+\tau\nabla w_2\big| \, d\tau.
\end{aligned}
\end{equation*}
Estimating the first term above, we have
\begin{equation*}
    \begin{aligned}
        \rho^3 \int_0^1  \big| f'(\hat{v}+\tau w_1)\big|\big|\nabla w_1-\nabla w_2 \big|\, d\tau 
        &=\rho^{s-1}\int_0^1  \big| f'\Big(\frac{\hat{v}+\tau w_1}{\xi}\Big)\big|\big|\nabla w_1-\nabla w_2 \big|\, d\tau
        \\
        &\leq \rho^{s-1}\|f'\|_{L^\infty([1-\delta,1+\delta])}\big|\nabla w_1-\nabla w_2 \big|
        \\
        &\leq C\rho^{s-1}\big|\nabla\big(w_1-w_2\big)\big|,
    \end{aligned}
\end{equation*}
    and 
\begin{equation*}
\begin{aligned}
&\rho^3\int_0^1 \big|f'(\hat{v}+\tau w_1)-f'(\hat{v}+\tau w_2)\big|\big|\nabla\hat{v}+\tau\nabla w_2\big| \, d\tau
\\
&\quadd=\rho^{s-1}\int_0^1 \big|f'\Big(\frac{\hat{v}+\tau w_1}{\xi}\Big)-f'\Big(\frac{\hat{v}+\tau w_2}{\xi}\Big) \big|\big|\nabla\hat{v}+\tau\nabla w_2\big| \, d\tau
\\
&\quadd\leq \rho^{s-1}\|f''\|_{L^\infty([1-\delta,1+\delta])} \int_0^1 \tau\big|\frac{w_1-w_2}{\xi} \big| \big|\nabla\hat{v}+\tau\nabla w_2\big| \, d\tau
\\
&\quadd= \rho^{2s-3}\|f''\|_{L^\infty([1-\delta,1+\delta])}\big|w_1-w_2 \big| \int_0^1 \tau \big|\nabla\hat{v}+\tau\nabla w_2\big| \, d\tau
\\
&\quadd\leq C\rho^{3s-4}\big|w_1-w_2\big|.
\end{aligned}
\end{equation*}
Therefore,
\begin{equation*}
    \rho^3\big|\nabla(Q(w_1)-Q(w_2))\big| \leq C\{ \rho^{3s-4}\big|w_1-w_2\big| +\rho^{s-1}\big|\nabla\big(w_1-w_2\big)\big|\}.
\end{equation*}
Combining the estimates above, we have
\begin{equation*}
\begin{aligned}
    &\rho^2\big|Q(w_1)-Q(w_2)\big|+\rho^3\big|\nabla(Q(w_1)-Q(w_2))\big| 
        \\
    &\quad\leq C\{\rho^{s-2}|w_1-w_2|+\rho^{3s-4}\big|w_1-w_2\big| +\rho^{s-1}\big|\nabla\big(w_1-w_2\big)\big|\}.
        \\
    &\quad=C\rho^{s-2}\{  |w_1-w_2|+\rho^{2s-2}\big|w_1-w_2\big| +\rho\big|\nabla\big(w_1-w_2\big)\big|\}
        \\
    &\quad\leq C\rho^{s-2}\{  |w_1-w_2|+\rho\big|\nabla\big(w_1-w_2\big)\big|\}.
\end{aligned}
\end{equation*}
In summary,
\begin{equation*}
        \rho^2\big|Q(w_1)-Q(w_2)\big|+\rho^3\big|\nabla(Q(w_1)-Q(w_2))\big| \leq C\rho^{s-2}\{  |w_1-w_2|+\rho\big|\nabla\big(w_1-w_2\big)\big|\}.
\end{equation*}
Multiplying both sides by $e^{\mu t}\rho^{-s}$ and using the fact $s>2$, we get
\begin{equation*}
\begin{aligned}
    &e^{\mu t}\rho^{-(s-2)}\{\big|Q(w_1)-Q(w_2)\big|+\rho\big|\nabla(Q(w_1)-Q(w_2))\big|\}
        \\
    &\quadd\leq Ce^{\mu t}\rho^{-s}\rho^{s-2}\{  |w_1-w_2|+\rho\big|\nabla\big(w_1-w_2\big)\big|\}
        \\
    &\quadd\leq Ce^{\mu t}\rho^{-s}\{  |w_1-w_2|+\rho\big|\nabla\big(w_1-w_2\big)\big|\}.
\end{aligned}
\end{equation*}
Thus,
\begin{equation*}
        \|Q(w_1)-Q(w_2)\|_{\Lambda_{\mu,s-2}^{1}([t_0,\infty)\times\Sigma)}\leq C\|w_1-w_2\|_{\Lambda_{\mu,s}^{1}([t_0,\infty)\times\Sigma)}.
\end{equation*}
Hence, by Theorem 2.9, we obtain
\begin{equation*}
\begin{aligned}
    &{\lVert \mathcal{T}(w_1)-\mathcal{T}(w_2)\rVert}_{\Lambda_{\mu,s}^{2,\alpha}([t_0,\infty)\times\Sigma)}
\\
    &\quadd\leq C {\lVert P(w_1)-P(w_2)\rVert}_{\Lambda_{\mu,s-2}^{0,\alpha}([t_0,\infty)\times\Sigma)}
\\
    &\quadd\leq C\{ 
        {\lVert w_1-w_2\rVert}_{\Lambda_{\mu,s}^{1}([t_0,\infty)\times\Sigma)}
        {\lVert Q(w_1)\rVert}_{\Lambda_{0,0}^{1}([t_0,\infty)\times\Sigma)}
\\
    &\quadd\quadd+{\lVert w_2\rVert}_{\Lambda_{0,0}^{1}([t_0,\infty)\times\Sigma)}
        {\lVert Q(w_1)-Q(w_2)\rVert}_{\Lambda_{\mu,s-2}^{1}([t_0,\infty)\times\Sigma)}\}
\\
    &\quadd\leq  CC'(\epsilon(t_0)+Be^{-\mu t_0}){\lVert w_1-w_2\rVert}_{\Lambda_{\mu,s}^{1}([t_0,\infty)\times\Sigma)}+CB e^{-\mu t_0}{\lVert w_1-w_2\rVert}_{\Lambda_{\mu,s}^{1}([t_0,\infty)\times\Sigma)}
\\
    &\quadd\leq C\{C'(\epsilon(t_0)+Be^{-\mu t_0})+Be^{-\mu t_0}\}{\lVert w_1-w_2\rVert}_{\Lambda_{\mu,s}^{2,\alpha}([t_0,\infty)\times\Sigma)}.
\end{aligned}
\end{equation*}
We finish the proof by choosing $t_0$ sufficiently large. 
\par\indent{\it Step 4.} We now finish the proof. By the contraction mapping theorem, we have $w\in \Lambda_{\mu,s}^{2,\alpha}([t_0,\infty)\times\Sigma)$ satisfying $\mathcal{T}(w)=w.$ This yields a solution $w\in \Lambda_{\mu,s}^{2,\alpha}([t_0,\infty)\times\Sigma)$ of $\mathcal{N}(\hat{v}+w)=0.$ Then, $v=\hat{v}+w$ is a solution of $(\star)$.
\end{proof}
We refer to any $\hat{v}$ obeying \eqref{eq:ASS1}-\eqref{eq:ASS2} as an order-$\mu$ approximate solution to \eqref{eq:CYLINDRICALLN} with leading term $\xi$.
\section{Approximate Solutions}\label{sec-APPROX2}
In this section, we present a general construction of approximate solutions via perturbations of solutions to the linearized problem. For any prescribed order, such approximations can be produced from linearized solutions with decay at infinity. The coefficients appearing in these linearized modes constitute the free asymptotic data: distinct choices lead to distinct approximate solutions at that prescribed order. For the proof of the following proposition, we require a technical lemma due to \cite{HJS2024}; for convenience, we state it here.
\begin{lemma}\label{lem:wjdecay}
Let $\Sigma\subsetneq \mathbb{S}^{n-1}$ be a domain with $\partial\Sigma\in C^{1,\alpha}$ for some $\alpha\in(0,1)$, $s$ the positive constant given by \eqref{eq:s},  $\rho\in C^\infty(\Sigma)\cap \mathrm{Lip}(\Sigma)$ the positive solution of \eqref{eq:rho}-\eqref{eq:rhoB}, and $\mathcal{L}$ the operator in \eqref{eq:ELLv}. Assume $m$ is a nonnegative integer and $\gamma$ is a positive constant.

Suppose $h_0,h_1,\cdots,h_m\in L^2(\Sigma)$ and, for some constants $A,B>0$ and $a>s$,
\begin{equation}
\|h_j\|_{L^2(\Sigma)}\leq B\quad\text{for }j=0,1,\cdots,m,
\end{equation}
and
\begin{equation}
|h_j|\leq A\rho^{a-2}\quad\text{in }\Sigma\text{ for }j=0,1,\cdots,m.
\end{equation}
Then, there exists $w_0,w_1,\cdots,w_m,w_{m+1}\in C^{1,\alpha}(\overline\Sigma)$ such that 
\begin{equation}
\mathcal{L}\bigg(\sum\limits^{m+1}_{j=0} t^je^{-\gamma t}w_j\bigg)=\sum\limits^{m}_{j=0} t^je^{-\gamma t}w_j\quad\text{in }\mathbb{R}\times\Sigma.
\end{equation}
Moreover, for each $j=0,1,\cdots,m+1$,
\begin{equation}
|w_j|+\rho|\nabla_\theta w_j|\leq C(B+A)\rho^s\quad\text{in }\Sigma,
\end{equation}
where C is a positive constant depending only on $n,\alpha,a,\gamma,$ and $\Sigma$.
\end{lemma}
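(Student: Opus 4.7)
The plan is to substitute the ansatz $v = \sum_{j=0}^{m+1} t^j e^{-\gamma t} w_j$ directly into $\mathcal{L}$, match coefficients of $t^j e^{-\gamma t}$, and reduce the problem to a triangular system of elliptic equations on $\Sigma$ that I will solve recursively from the top index downward. Using $\mathcal{L} = \partial_{tt} + L - \beta^2$ from \eqref{eq:mathcalL} and setting $L_\gamma := L + \gamma^2 - \beta^2 = \Delta_\theta - \kappa\rho^{-2} + (\gamma^2 - \beta^2)$, a direct computation gives
\begin{equation*}
\mathcal{L}\bigl(t^j e^{-\gamma t} w_j\bigr) = e^{-\gamma t}\bigl[\,t^j L_\gamma w_j - 2\gamma j\, t^{j-1} w_j + j(j-1)\, t^{j-2} w_j\bigr].
\end{equation*}
Summing over $j$ and collecting the coefficient of $t^j e^{-\gamma t}$ (with the convention $w_{m+2} = w_{m+3} = 0$) reduces the required identity to the system
\begin{equation*}
L_\gamma w_{m+1} = 0, \qquad L_\gamma w_j = h_j + 2\gamma(j+1)\, w_{j+1} - (j+2)(j+1)\, w_{j+2}\quad(j = m, m-1, \ldots, 0).
\end{equation*}

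To solve these equations I apply Theorem~\ref{thm:Fredholm} with $\lambda = \gamma^2 - \beta^2$. In the nonresonant case $\gamma^2 - \beta^2 \notin \{\lambda_i\}_{i\ge 1}$, I take $w_{m+1} \equiv 0$ and obtain each $w_j$ as the unique $H_0^1(\Sigma)$-solution of its equation. In the resonant case $\gamma^2 - \beta^2 = \lambda_{i_0}$, Theorem~\ref{thm:Fredholm}(ii) requires the right-hand side to be $L^2$-orthogonal to $\ker(L_\gamma)$, and the solution is determined only modulo this kernel. I exploit the freedom in a cascade: choose $w_{m+1} \in \ker(L_\gamma)$ so that $h_m + 2\gamma(m+1) w_{m+1}$ is orthogonal to the eigenspace, solve for the orthogonal part of $w_m$, then adjust $w_m$ by a kernel element so that the right-hand side at level $m-1$ is orthogonal to the eigenspace, and continue downward. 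Since the free parameter at level $j+1$ appears in the solvability condition at level $j$ with nonzero coefficient $2\gamma(j+1)$, each adjustment is uniquely determined and linearly bounded by the $L^2$-norms of the prescribed data.

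For the weighted pointwise estimates I proceed by downward induction on $j$ using Theorem~\ref{thm:weightedEV}. Assuming $|w_{j+1}|, |w_{j+2}| \leq C(A+B)\rho^s$ has been established, the right-hand side of the $j$-th equation is bounded by $A\rho^{a-2} + C'(A+B)\rho^s$, hence by $C''(A+B)\rho^{a'-2}$ with $a' := \min\{a,\,s+2\} > s$; Theorem~\ref{thm:weightedEV} then yields $|w_j| + \rho|\nabla_\theta w_j| \leq C(A+B)\rho^{\min(a',s)} = C(A+B)\rho^s$. Any kernel additions $c_j\,\phi_{i_0}$ inherit the same $\rho^s$-bound from the final estimate of Theorem~\ref{thm:weightedEV} applied to the eigenfunctions themselves, with $|c_j| \leq C(A+B)$ supplied by the projection formula combined with $\|h_{j-1}\|_{L^2} \leq B$. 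Finally, the $C^{1,\alpha}(\overline{\Sigma})$ regularity follows from standard boundary Schauder estimates applied to the uniformly degenerate operator $L_\gamma$ in the flattened coordinates of \eqref{eq:boundaryflat}, in the same spirit as in the proof of Lemma~\ref{lem:Apriori}.

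The main obstacle is the resonant bookkeeping: verifying that the cascade of projection adjustments is globally consistent, that each adjustment can be controlled linearly in $A+B$, and that the propagation of $\rho^s$-decay through the recursion never lands on the excluded borderline $a = s$ of Theorem~\ref{thm:weightedEV}. This is precisely why the choice $a' = \min\{a,\,s+2\}$ is critical at each induction step: it simultaneously absorbs the decay of the lower-order corrections coming from $w_{j+1}$, $w_{j+2}$ and preserves the strict inequality $a' > s$ needed to invoke the theorem cleanly.
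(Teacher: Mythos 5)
The paper does not prove this lemma; it is imported verbatim from \cite{HJS2024}, so there is no in-house argument to compare yours against. That said, your proof is the natural one and is essentially correct: substituting the ansatz and matching powers of $t$ does reduce the problem to the triangular system $L_\gamma w_{m+1}=0$, $L_\gamma w_j = h_j + 2\gamma(j+1)w_{j+1}-(j+2)(j+1)w_{j+2}$ (your computation of $\mathcal{L}(t^je^{-\gamma t}w_j)$ checks out against \eqref{eq:mathcalL} and \eqref{eq:defLi}), the resonance cascade is consistent because the kernel component of $w_{j+1}$ enters the solvability condition at level $j$ with the nonvanishing coefficient $2\gamma(j+1)$ and is not re-used at any lower level, and the downward induction with $a'=\min\{a,s+2\}>s$ correctly avoids the excluded borderline of Theorem~\ref{thm:weightedEV} and propagates the $\rho^s$ bound. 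Two points deserve more care before Theorem~\ref{thm:weightedEV} can be invoked. First, that theorem is stated for a \emph{bounded} solution $u\in C^\infty(\Sigma)$ of \eqref{eq:Llambda} with smooth $f$, whereas Theorem~\ref{thm:Fredholm} only hands you $w_j\in H^1_0(\Sigma)$ and the $h_j$ are only assumed to lie in $L^2$ with the pointwise bound $A\rho^{a-2}$; you need an intermediate step (a barrier built from $\rho^s$ using $s(s-1)=\kappa$, or the Moser-type iteration in \cite{HJS2024}) to upgrade $H^1_0$ to $L^\infty$ so the weighted estimate applies, and interior elliptic regularity to get smoothness where $h_j$ permits. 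Second, the asserted $C^{1,\alpha}(\overline\Sigma)$ regularity does not follow from the $\rho^s$ decay and a one-line appeal to Schauder theory; you should run the scaled estimates on the degenerate cylinders $Q_i(Y',N)$ as in Lemma~\ref{lem:Apriori} and use $s=\tfrac{n+2}{2}>1+\alpha$ to conclude that $w_j$ and $\nabla_\theta w_j$ extend H\"older-continuously by zero to $\partial\Sigma$. Neither issue changes the architecture of your argument.
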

\begin{prop}\label{prop:construct} Let $\xi$ be the positive solution of \eqref{eq:xi}-\eqref{eq:xiB}, $\mathcal{I}$ be the index set associated with $\xi$ (or $\Sigma$), and assume $\mu>\gamma_1$ with $\mu\notin\mathcal{I}$. Suppose that $\eta$ is a solution of $\mathcal{L}\eta=0$ on $\mathbb{R}\times\Sigma$, with $\eta(t,\cdot)\to 0$ as $t\to\infty$ uniformly on $\Sigma$. Then for some $t_0>0$, there exists a smooth function $\tilde{\eta}$ on $[t_0,\infty)\times\Sigma$ such that $\hat{v}=\xi+\eta+\tilde{\eta}$ satisfies \eqref{eq:ASS1} and \eqref{eq:ASS2}. 
\end{prop}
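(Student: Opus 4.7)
The plan is to build $\tilde\eta$ as a finite sum of blocks of the form $\sum_j t^j e^{-\gamma t}w_{\gamma,j}(\theta)$ with $\gamma\in\mathcal{I}$, produced iteratively by Lemma~\ref{lem:wjdecay}, so as to cancel one exponential block of $\mathcal{N}(\hat v)$ at a time. A Taylor expansion of $v\mapsto v^{(n+2)/(n-2)}$ around $\xi$, combined with $\mathcal{N}(\xi)=0$, gives
\begin{equation*}
\mathcal{N}(\xi+w)=\mathcal{L}w-F(w),\qquad F(w)=\rho^{(n-6)/2}w^2 h(\rho^{(n-2)/2}w),
\end{equation*}
as already recorded in Section~\ref{sec-LYE}. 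Since $\mathcal{L}\eta=0$, the initial residual is $\mathcal{N}(\xi+\eta)=-F(\eta)$. The projection argument of Lemma~\ref{lem:Uniqueness} applied to $\eta$ shows that uniform decay as $t\to\infty$ forces $\eta=\sum_i c_ie^{-\gamma_i t}\phi_i$ with no exponentially growing modes, and by Theorem~\ref{thm:weightedEV} each $\phi_i$ satisfies $|\phi_i|+\rho|\nabla_\theta\phi_i|\le C_i\rho^s$. Modes with $\gamma_i\ge\mu$ decay faster than $e^{-\mu t}$ and can be absorbed into the remainder, so I may assume $\eta$ is the finite sum over $\gamma_i<\mu$ with $|\eta|+\rho|\nabla\eta|\le C\rho^s e^{-\gamma_1 t}$; consequently $F(\eta)$ is a finite combination of blocks $e^{-(\gamma_i+\gamma_j)t}$ (plus higher-order pieces) with spatial coefficients pointwise bounded by $C\rho^{a-2}$ for some $a>s$.

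Inductively, assume $\hat v_k=\xi+\eta+\tilde\eta_k$ has
\begin{equation*}
\mathcal{N}(\hat v_k)=\sum_{\gamma\in\mathcal{I}\cap[2\gamma_1,\mu)}\sum_{j=0}^{J_k(\gamma)}t^j e^{-\gamma t}E_{k,\gamma,j}(\theta)+R_k,
\end{equation*}
where each $E_{k,\gamma,j}$ is pointwise controlled by $C\rho^{a-2}$ for some $a>s$ and $R_k$ already satisfies \eqref{eq:ASS2}. Pick the smallest $\gamma^*$ still present, use Lemma~\ref{lem:wjdecay} to solve $\mathcal{L}\psi_{k+1}=-(\text{the }\gamma^*\text{-block})$ with $\psi_{k+1}=\sum_j t^j e^{-\gamma^* t}w_{k,j}$ and $|w_{k,j}|+\rho|\nabla w_{k,j}|\le C\rho^s$, and set $\tilde\eta_{k+1}=\tilde\eta_k+\psi_{k+1}$. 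A direct computation yields
\begin{equation*}
\mathcal{N}(\hat v_{k+1})-\mathcal{N}(\hat v_k)=\mathcal{L}\psi_{k+1}-\bigl[F(\eta+\tilde\eta_{k+1})-F(\eta+\tilde\eta_k)\bigr];
\end{equation*}
the first term kills the $\gamma^*$-block by construction, while the bracketed nonlinear difference is a product of factors whose exponential rates sum to at least $\gamma^*+\gamma_1$, so it only produces new blocks at exponents lying in $\mathcal{I}$ and strictly exceeding $\gamma^*$, still with spatial weight $\rho^{a'-2}$ for some $a'>s$ (each squaring of a $\rho^s$-quantity gains an extra factor of $\rho^s$). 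Thus the set of uninverted exponents in $\mathcal{I}\cap[\gamma_1,\mu)$ shrinks strictly at each stage.

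The scheme terminates in finitely many steps because $\mathcal{I}\cap[\gamma_1,\mu)$ is finite: only finitely many $\gamma_i$ lie below $\mu$, and for each such $i$ the integer coefficient in the definition \eqref{eq:INDEX} of $\mathcal{I}$ is bounded by $\mu/\gamma_1$. At termination every remaining term in $\mathcal{N}(\hat v)$ has exponential rate strictly greater than $\mu$; choosing $t_0$ large enough that $t^j e^{-(\gamma-\mu)t}\le 1$ for all relevant $\gamma>\mu$ and degrees $j$, we obtain \eqref{eq:ASS2}. Estimate \eqref{eq:ASS1} is then immediate: every summand in $\eta+\tilde\eta$ is of the form $t^j e^{-\gamma t}$ times a function bounded by $C\rho^s$ with $\gamma\ge\gamma_1>0$, so $\epsilon(t)$ may be taken as any decreasing dominator of the resulting finite sum. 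The main obstacle is the combinatorial bookkeeping: one must verify at each iteration that every exponent produced by the nonlinearity lies in $\mathcal{I}$ (automatic from closure of $\mathcal{I}$ under addition), that the polynomial $t$-degrees $J_k(\gamma)$ stay finite (each application of Lemma~\ref{lem:wjdecay} raises a degree by at most one, and in the natural ordering above each exponent is inverted only finitely many times), and that the spatial weight stays in the range $a>s$ required by Lemma~\ref{lem:wjdecay}. The hypothesis $\mu\notin\mathcal{I}$ is precisely what guarantees a definite gap between the last inverted exponent and the threshold $\mu$, so the iteration stops cleanly with no resonance at $\mu$ itself.
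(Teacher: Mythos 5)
Your proposal is correct and follows essentially the same strategy as the paper's proof: iteratively cancel the lowest remaining exponential block of $\mathcal{N}(\hat v)$ using Lemma \ref{lem:wjdecay}, relying on the closure of $\mathcal{I}$ under addition, the finiteness of $\mathcal{I}\cap(0,\mu)$, and polynomial-in-$t$ corrections at resonances between the $\gamma_i$'s and the composite exponents. The paper organizes the same iteration more explicitly (interleaving the eigenvalue exponents $\gamma_i$ with the composite exponents $\tilde\gamma_i$ and treating the non-resonant case before the resonant one), and, like you, it in effect restricts $\eta$ to a finite combination of the low modes rather than fully justifying the reduction from an arbitrary uniformly decaying element of the kernel.
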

\begin{proof} 
In the proof below, we adopt the notation $f=O(g)$ if $|f|\leq Cg$.
We first decompose the index set $\mathcal{I}$. Set 
    \begin{equation*}
        \mathcal{I}_\gamma=\Bigl\{\gamma_j\,:\, j\geq 1\Bigr\},
    \end{equation*}
    and 
    \begin{equation*}
        \mathcal{I}_{\tilde{\gamma}}=\Bigl\{\sum\limits_{i=1}^k n_i\gamma_i\, :\, n_i\in\mathbb{Z}_+,\,\sum\limits_{i=1}^kn_i\geq2 \Bigr\}.
    \end{equation*}
    We assume $\mathcal{I}_{\tilde{\gamma}}$ is given by a strictly increasing sequence $\{ \tilde{\gamma}_i\}_{i\geq 1}$, with $\tilde{\gamma}_1=2\gamma_1$.
    For convenience, we set
    \begin{equation*}
        \mathcal{N}(v)=v_{tt}+\Delta_\theta v-\frac{1}{4}(n-2)^2v-\frac{1}{4}n(n-2)v^{\frac{n+2}{n-2}},
    \end{equation*}
    and 
    \begin{equation*}
        P(\omega)=\frac{1}{4}n(n-2)[(\xi+\omega)^{\frac{n+2}{n-2}}-\xi^{\frac{n+2}{n-2}}]-\frac{1}{4}n(n+2)\xi^{\frac{4}{n-2}}\omega.
    \end{equation*}
    Take a function $\omega$ such that $|\omega|<\xi$ on $\mathbb{R}\times\Sigma$. We have
    \begin{equation*}
    \begin{aligned}
    \mathcal{N}(\xi+\omega)
    &=\mathcal{L}\omega-P(\omega)
    \\
    &=\mathcal{L}\omega-\frac{1}{4}n(n-2)\Big[(\xi+\omega)^{\frac{n+2}{n-2}}-\xi^{\frac{n+2}{n-2}}\Big]+\frac{1}{4}n(n+2)\xi^{\frac{4}{n-2}}\omega
    \\
    &=\mathcal{L}\omega-\frac{1}{4}n(n-2)\Big[(\xi+\omega)^{\frac{n+2}{n-2}}-\xi^{\frac{n+2}{n-2}}-\frac{n+2}{n-2}\xi^{\frac{4}{n-2}}\omega\Big]
    \\
    &=\mathcal{L}\omega-\frac{1}{4}n(n-2)\xi^{\frac{n+2}{n-2}}\Big[(1+\xi^{-1}\omega)^{\frac{n+2}{n-2}}-1-\frac{n+2}{n-2}\xi^{-1}\omega\Big].
    \end{aligned}
    \end{equation*}
    Consider the Taylor expansion for $|s|<1$,
    \begin{equation*}
        (1+s)^{\frac{n+2}{n-2}}=\sum^{\infty}_{k=0} a_ks^k,
    \end{equation*}
    where each $a_k$ is a constant. Here, we write the infinite sum just for convenience. We do not need the convergence of the infinite series and we always expand up to a finite order. Then, by renaming the constants $\{a_k\}$, we have,
    \begin{equation}\label{eq:powerseries}
        \mathcal{N}(\xi+\omega)=\mathcal{L}\omega+\sum^\infty_{k=2} a_k\xi^{\frac{n+2}{n-2}-k}\omega^k =\mathcal{L}\omega+\rho^{-2-\beta}\sum^\infty_{k=2} a_k\rho^{k\beta}\omega^k.
    \end{equation}
    We point out that the summation above starts from $k=2$.
    
We first consider the case that 
    \begin{equation}\label{eq:EMPTY}
        \mathcal{I}_\gamma \,\cap\mathcal{I}_{\tilde{\gamma}} =\emptyset. 
    \end{equation}
    In other words, no $\gamma_i$ can be written as a linear combination of some $\gamma_1,\cdots,\gamma_{i-1}$ with positive integer coefficients, except a single $\gamma_{i'}$ which is equal to $\gamma_i$. In this case, we arrange $\mathcal{I}$ as follows
    \begin{equation}\label{eq:CHAIN}
        (\beta < )\,\gamma_1\leq \cdots\leq\gamma_{k_1}<\tilde{\gamma}_1<\cdots<\tilde{\gamma}_{l_1}<\gamma_{k_1+1}\leq\cdots\leq\gamma_{k_2}<\tilde{\gamma}_{l_1+1}<\cdots 
    \end{equation}
For each $\tilde{\gamma}_i\in\mathcal{I}_{\tilde{\gamma}} $, we consider nonnegative integers $n_1,\cdots,n_{k_1}$ such that 
    \begin{equation}\label{eq:geq2}
        n_1+\cdots+n_{k_1}\geq 2, \quad n_1\gamma_1+\cdots+n_{k_1}\gamma_{k_1}=\tilde{\gamma}_i. 
    \end{equation}
We proceed in several steps. 

{\it Step 1.} Note that $\gamma_{k_1}<\tilde{\gamma}_1=2\gamma_1$. We take $\eta$ to be a solution of $\mathcal{L}\eta=0$ of the form
    \begin{equation}\label{eq:eta}
        \eta(t,\theta)=\sum^{k_1}_{i=1}c_ie^{-\gamma_it}\phi_i(\theta),
    \end{equation}
where the $c_i$'s are chosen freely. We first take $\omega$ to be $\eta$ as in \eqref{eq:eta}. By \eqref{eq:powerseries} and $\mathcal{L}\eta=0$, we have
\begin{equation*}
\mathcal{N}(\xi+\eta)=
        \sum_{n_1+\cdots+n_{k_1}\geq 2}a_{n_1\cdots n_{k_1}}e^{-(n_1\gamma_1+\cdots+n_{k_1}\gamma_{k_1})t}\phi^{n_1}_{1}\cdots\phi^{n_{k_1}}_{k_1}\rho^{(n_1+\cdots+n_{k_1})\beta},
\end{equation*}
where $n_1,\cdots,n_{k_1}$ are nonnegative integers, and $a_{n_1\cdots n_{k_1}}$ is a constant. By the definition of $\mathcal{I}_{\tilde{\gamma}}$, $n_1\gamma_1+\cdots+n_{k_1}\gamma_{k_1}$ is some $\tilde{\gamma}_i$. Hence, we can write
    \begin{equation}\label{eq:7.38}
        \rho^{-2-\beta}\sum_{i=2}^{\infty}b_i\rho^{i\beta}\eta^i=\sum_{i=1}^{\infty}e^{-\tilde{\gamma}_i t}h_i,
    \end{equation}
    where $h_i$ is given by 
    \begin{equation*}
        h_i=\rho^{-2-\beta}\sum_{(n_1,\cdots,n_{k_1})\in\mathcal{N}_{\tilde{\gamma}_i}} a_{n_1\cdots n_{k_1}}\phi^{n_1}_1\cdots\phi^{n_{k_1}}_{k_1}\rho^{(n_1+\cdots+n_{k_1})\beta}.
    \end{equation*}
    Here, we denote by $\mathcal{N}_{\tilde{\gamma}_i}$ the collection of all $(n_1,\cdots,n_{k_1})$ satisfying \eqref{eq:geq2}.
Therefore, 
\begin{equation}
\mathcal{N}(\xi+\eta)=\sum_{i=1}^{l_1}e^{-\tilde{\gamma}_i t}h_i(\theta)+O(e^{-\tilde{\gamma}_{l_1+1} t}\rho^{s-2}),
\end{equation}
and in particular,
\begin{equation*}
\mathcal{N}(\xi+\eta) = O(e^{-\tilde{\gamma}_1t}\rho^{s-2}).
\end{equation*}

{\it Step 2.}
We claim there exists a function $\tilde{\eta}_1$ such that
\begin{equation*}
\mathcal{N}(\xi+\eta+\tilde{\eta}_{1}) = O(e^{-\tilde{\gamma}_{l_1+1}t}\rho^{s-2}).
\end{equation*}
Note that $3\gamma_1 \in \mathcal{I}_{\tilde{\gamma}}$. We proceed in several cases.

{\it Case 1.} Assume $\gamma_{k_1+1}<3\gamma_1$. Then $\tilde{\gamma}_{l_1}<\gamma_{k_1+1}<3\gamma_1$ and $\tilde{\gamma}_{l_1+1}\leq3\gamma_1$.
Denote 
\begin{equation}\label{eq:I1}
I_1=\sum_{i=1}^{l_1}e^{-\tilde{\gamma}_it}h_i(\theta).
\end{equation}
Then,
\begin{equation}
\mathcal{N}(\xi+\eta)=I_1+O(e^{-\tilde{\gamma}_{l_1+1} t}\rho^{s-2}).
\end{equation}
We need to solve
\begin{equation}\label{eq:previous equation}
\mathcal{L}\tilde{\eta}_1=-I_1.
\end{equation}
Note that $\gamma_m\neq \tilde{\gamma}_i$ for any $i$ and $m$. By Lemma \ref{lem:wjdecay} with $m=0$ and $\gamma=\tilde{\gamma}_i$ for $i=1,\cdots,l_1$, \eqref{eq:previous equation} admits a solution $\tilde{\eta}_{1}$ of the form
\begin{equation}\label{eq:eta1tilde}
\tilde{\eta}_{1}=\sum\limits^{l_1}_{i=1}e^{-\tilde{\gamma}_it}w_i(\theta).
\end{equation}
In other words, we solve
\begin{equation}
\mathcal{L}(e^{-\tilde{\gamma}_it}w_i(\theta))=-e^{-\tilde{\gamma}_it}h_i(\theta).
\end{equation}
Then,
\begin{equation}
\mathcal{N}(\xi+\eta+\tilde{\eta}_1)-\mathcal{N}(\xi+\eta)=\mathcal{L}(\tilde{\eta}_1)+(P(\eta)-P(\eta+\tilde{\eta}_1)),
\end{equation}
so that 
\begin{equation}
\mathcal{N}(\xi+\eta+\tilde{\eta}_1)=(P(\eta)-P(\eta+\tilde{\eta}_1))+O(e^{-\tilde{\gamma}_{l_1+1} t}\rho^{s-2}).
\end{equation}
Now, we expand
\begin{equation}
P(\eta)-P(\eta+\tilde{\eta}_1)=\rho^{-2-\beta}\sum\limits_{m\geq 2}\sum\limits_{\ell=1}^{m}c_{m,\ell}\rho^{m\beta}\eta^{m-\ell}\tilde{\eta}_1^{\ell},
\end{equation}
and since
\begin{equation}
|\eta\tilde{\eta}_1|\leq Ce^{-3\gamma_1t}\rho^{2s},\quad \tilde{\eta}_1^2\leq Ce^{-4\gamma_1t}\rho^{2s},
\end{equation}
we have 
\begin{equation}
P(\eta)-P(\eta+\tilde{\eta}_1)=O(e^{-3\gamma_1t}\rho^{\beta-2+2s}).
\end{equation}
Hence,
\begin{equation*}
\mathcal{N}(\xi+\eta+\tilde{\eta}_{1}) = O(e^{-\tilde{\gamma}_{l_1+1}t}\rho^{s-2}).
\end{equation*}

{\it Case 2.} We now assume $\gamma_{k_1+1}>3\gamma_1$. Then, $\tilde{\gamma}_{l_1}\geq3\gamma_1$. Let $n_1$ be the largest integer such that $\tilde{\gamma}_{n_1}<3\gamma_1$. Then, $\tilde{\gamma}_{n_1+1}=3\gamma_1$. We repeat the same argument as in Case 1 with $n_1$ replacing $l_1$. In the definition of $I_1$ in \eqref{eq:I1}, we in turn take the summation from $i=1$ to $n_1$ and define
\begin{equation*}
\tilde{\eta}_{11}(t,\theta)=\sum\limits^{n_1}_{i=1}e^{-\tilde{\gamma}_it}w_i(\theta),
\end{equation*}
for appropriate $w_i$. We note that there is no $\gamma_i$ between $\tilde{\gamma}_1$ and $\tilde{\gamma}_{n_1+1}$.
Hence, arguing in the same way as Case 1, we get
\begin{equation*}
\mathcal{N}(\xi+\eta+\tilde{\eta}_{11}) = O(e^{-\tilde{\gamma}_{n_1+1}t}\rho^{s-2})=O(e^{-3\gamma_1t}\rho^{s-2}).
\end{equation*}
We are now in a similar situation as the beginning of Step 2, with $\tilde{\gamma}_{n_1+1}=3\gamma_1$ replacing $\tilde{\gamma}_1=2\gamma_1$. If $\gamma_{k_1+1}<4\gamma_1$, we proceed as in Case 1. If $\gamma_{k_1+1}>4\gamma_1$, we proceed as in the beginning of Case 2 by taking the largest $n_2$ such that $\tilde{\gamma}_{n_2}<4\gamma_1$. After finitely many steps, we reach $\tilde{\gamma}_{l_1}$. This completes Step 2.

{\it Step 3.} We are in a similar situation as Step 1, with $\tilde{\gamma}_{l_1+1}$ replacing $\tilde{\gamma}_{1}$. Repeating the argument with $k_1+1$, $k_2$, and $l_1+1$ replacing $1$, $k_1$, and $1$ respectively, and noting $\gamma_{k_2}<\tilde{\gamma}_{l_1+1}$, set 
\begin{equation*}
    \eta_2(t,\theta)=\sum^{k_2}_{i=k_1+1}c_ie^{-\gamma_it}\phi_i(\theta).
\end{equation*}
Arguing as in Step 1, we have
\begin{equation*}
\mathcal{N}(\xi+\eta+\tilde{\eta}_{11}+\eta_2) = \sum_{i=l_1+1}^{l_2}e^{-\tilde{\gamma}_it}h_i(\theta)+O(e^{-\tilde{\gamma}_{l_2+1} t}\rho^{s-2}),
\end{equation*}
for appropriately chosen functions $h_i$.

{\it Step 4.} The situation is again similar to Step 2. Denote 
\begin{equation*}
\mathcal{I}_2=\sum_{i=l_1+1}^{l_2}e^{-\tilde{\gamma}_it}h_i(\theta),
\end{equation*}
and
\begin{equation}
\tilde{\eta}_{2}(t,\theta)=\sum\limits^{l_2}_{i=l_1+1}e^{-\tilde{\gamma}_it}w_i(\theta).
\end{equation}
As in to Step 2, we solve $\mathcal{L}\tilde{\eta}_2=-I_2$ and examine
\begin{equation}
P(\eta+\tilde{\eta}_{11}+\eta_2)-P(\eta+\tilde{\eta}_{11}+\eta_2+\tilde{\eta}_{2}).
\end{equation}
The associated expansion contains at least one factor from $\eta+\tilde{\eta}_{11}+\eta_2$ and at least 1 factor from $\tilde{\eta}_{2}$. By assumption, the exponential decay is at least $\tilde{\gamma}_{l_1+1}+\gamma_1\geq\tilde{\gamma}_{l_2}$. Hence,
\begin{equation*}
\mathcal{N}(\xi+\eta+\tilde{\eta}_{11}+\eta_2+\tilde{\eta}_{2}) =O(e^{-\tilde{\gamma}_{l_2+1} t}\rho^{s-2}).
\end{equation*}

Next, we consider the general case; namely, some $\gamma_i$ can be written as a linear combination of some $\gamma_1,\cdots,\gamma_{i-1}$ with positive integer coefficients. We will modify the discussion above to treat the general case. Whenever some $\gamma_i$ coincides with some $\tilde{\gamma}_{i'}$, an extra power of $t$ appears when solving the linear equation $\mathcal{L}w=f$ according to Lemma \ref{lem:wjdecay}, and such a power of $t$ will generate more powers of $t$ upon iteration. 

As an illustration, consider $\gamma_{k_1}=\tilde{\gamma}_1$ as opposed to strict inequality in \eqref{eq:CHAIN}. This is the first time that some $\gamma_i$ may coincide with some $\tilde{\gamma}_{i'}$. We proceed similarly as in Step 1. Take $k_*\in\{1,\cdots,k_1-1\}$ such that 
\begin{equation*}
\gamma_{k_*}<\gamma_{k_*+1}=\cdots=\gamma_{k_1}=\tilde{\gamma}_1=2\gamma_1.
\end{equation*}
Following the argument of Step 1 and the beginning of Step 2, while solving \begin{equation}
\mathcal{L}\tilde{\eta}_1=-I_1,
\end{equation}
we follow the exact same procedure and obtain, for $i=2,\cdots,l_1$, 
\begin{equation}
\mathcal{L}(e^{-\tilde{\gamma}_it}w_i(\theta))=-e^{-\tilde{\gamma}_it}h_i(\theta).
\end{equation}
Now, for $i=1$, since $\tilde{\gamma}_1=2\gamma_1$, we obtain a solution corresponding to $e^{-\tilde{\gamma}_1 t}$ given by
\begin{equation}\label{eq:above}
te^{-\tilde{\gamma}_1 t}w_{11}(\theta)+e^{-\tilde{\gamma}_1 t}\phi_{k_1}(\theta).
\end{equation}
We then substitute into \eqref{eq:eta1tilde} the new expression given by \eqref{eq:above} for $e^{-\tilde{\gamma}_1t}$. The rest of the proof follows the same argument and can be modified accordingly. We can continue this process until we reach the prescribed decay rate.
\end{proof} 
\bibliographystyle{abbrv}
\bibliography{zhoupaper} 
\end{document}